\documentclass[11pt]{amsart}

\usepackage[marginratio=1:1,height=600pt,width=374pt,tmargin=110pt]{geometry}

\usepackage{amsthm}
\usepackage{breqn}
\usepackage{amsmath}
\usepackage{amsfonts}
\usepackage{amssymb}
\usepackage{tikz}
\usepackage{hyperref}
\usepackage{microtype} 
\usepackage{mathtools}
\usepackage{graphicx}
\usepackage{stmaryrd}

\newcommand{\R}{\mathbb{R}} 
\newcommand{\e}{\varepsilon}
\newcommand{\N}{\mathbb N}
\newcommand{\Z}{\mathbb Z}
\newcommand{\p}{\mathbb P}
\newcommand{\Q}{\mathbb Q}

\newcommand{\M}{{\bf M}}

\newcommand{\ind}{\mathrm{Ind}}
\newcommand{\hm}{\mathcal{H}}

\newcommand{\znm}{\mathcal{Z}_{n-1}(M,\Z/2)}

\newtheorem{theorem}{Theorem}[section]
\newtheorem*{theorem*}{Theorem}
\newtheorem{lemma}[theorem]{Lemma}
\newtheorem{corollary}[theorem]{Corollary}
\newtheorem{proposition}[theorem]{Proposition}

\newtheorem{claim}{Claim}

\theoremstyle{definition}

\newtheorem{definition}[theorem]{Definition}
\newtheorem*{remark}{Remark}
\newtheorem{remarq}[theorem]{Remark}

\bibliographystyle{plain}



\title{The Allen-Cahn equation on closed manifolds}
\author{Pedro Gaspar and Marco A. M. Guaraco}
\address{Instituto de Matem\'atica Pura e Aplicada (IMPA) \\ Estrada Dona Castorina 110 \\ 22460-320 Rio de Janeiro \\ Brazil}
\email{marcomen@impa.br,phgms@impa.br}

\thanks{Both authors were partly supported by CNPq-Brazil and NSF-DMS-1104592.}

\begin{document}
 
 
\begin{abstract} We study global variational properties of the space of solutions to $-\e^2\Delta u + W'(u)=0$ on any closed Riemannian manifold $M$. Our techniques are inspired by recent advances in the variational theory of minimal hypersurfaces and extend a well-known analogy with the theory of phase transitions. First, we show that solutions at the lowest positive energy level are either stable or obtained by min-max and have index 1. We show that if $\e$ is not small enough, in terms of the Cheeger constant of $M$, then there are no interesting solutions. However, we show that the number of min-max solutions to the equation above goes to infinity as $\e\to 0$ and their energies have sublinear growth. This result is sharp in the sense that for generic metrics the number of solutions is finite, for fixed $\e$, as shown recently by G. Smith. We also show that the energy of the min-max solutions accumulate, as $\e\to 0$, around limit-interfaces which are smooth embedded minimal hypersurfaces whose area with multiplicity grows sublinearly. For generic metrics with ${\rm Ric}_M>0$, the limit-interface of the solutions at the lowest positive energy level is an embedded minimal hypersurface of least area in the sense of Mazet-Rosenberg. Finally, we prove that the min-max energy values are bounded from below by the widths of the area functional as defined by Marques-Neves.
\end{abstract}

\maketitle

\setcounter{tocdepth}{1}

\section{Introduction}
Applications of variational methods to the theory of semilinear elliptic PDE is by now a vast and well-developed subject. For a large class of nonlinearities, available techniques provide information such as the number of solutions and some of their properties.

In this article, we discuss the case of nonlinearities given by derivatives of double-well potentials on closed manifolds. More precisely, we are interested in solutions to the elliptic Allen-Cahn equation on a closed manifold $M$, i.e. $u:M \to \R$ such that \begin{equation}\label{allencahn}-\e \Delta u + W'(u)/\e =0,\end{equation} where the nonlinearity is assumed to be the derivative of a double-well potential $W$ (e.g. $W(u)=(1-u^2)^2/4$). This equation arises in the study of phase transition interfaces of metal alloys \cite{AllenCahn}. 

The elliptic Allen-Cahn equation and its parabolic counterpart have been extensibly studied in the last decades for its connections with the theory of minimal hypersurfaces on $\R^n$, with an important source of motivation being De Giorgi's conjecture (see \cite{Savin} and the references therein). 

For compact domains, research have been driven by understanding the limit behavior of the solutions as $\e\to 0$. Since the late 70's it was expected that the level sets of the solutions would resemble minimal hypersurfaces as $\e\to 0$. This is indeed the case in many different situations some of which we briefly discuss in the next subsection of this Introduction. However, there are still many natural open questions concerning the  properties of the solutions such as its multiplicity, behavior of its energy values, Morse index, size of the nodal sets, number of nodal domains, etc. 

In the first part of this work we describe solutions at the lowest positive energy level. Later we show that, although equation (\ref{allencahn}) does not have infinite solutions in general \cite{GSmith}, the number of solutions grows to infinite as $\e\to 0$. In the final sections we describe the behavior of the energies of such solutions. More precise statements of our results can be found later on this Introduction. 

For some other semilinear elliptic equations results like these have been proven. This is a vast subject, we refer the reader to the surveys by Ambrosetti \cite{AmbrosettiSurvey}, Ekeland-Ghoussoub \cite{EkelandGhoussoubSurvey}, Rabinowitz \cite{RabinowitzSurvey} and the references therein. However, we should not expect some of these methods to work for equations with potentials of the type we consider in this work. We discuss this on Remark 2 at the end of this Introduction.

 In this respect and in a general sense, it is on the intent of this work to show with concrete examples that the analogy with the theory of minimal hypersurfaces can help to answer  some of these questions as well as provide directions for what the answers should be in other cases. 

\subsection*{The analogy with minimal hypersurfaces} Connections between the theories of phase transitions and minimal hypersurfaces have brought the attention of many mathematicians since the works of Modica \cite{Modica} and Sternberg \cite{Sternberg} in the 80's. 

The analogy we have in mind begins, informally, with the observation that solutions to (\ref{allencahn}) have the remarkable property that its level sets $u^{-1}(s)$ accumulate, as $\e\to 0$, around a minimal hypersurface on $M$, i.e. a critical point of the area functional. Several formal instances of this statement exist in the literature depending on the variational characteristics of the solutions, e.g. see \cite{Modica,Sternberg,TonegawaPadilla,HutchinsonTonegawa,TonegawaWickramasekera}. 

To state a particular case which is in the interest of the present work, remember that solutions to the equation above are variational objects in the sense that they are critical points of the energy functional $$E_\e(u) = \int_M \frac{\e}{2}|\nabla u|^2 + \frac{W(u)}{\e}, \ \ u \in H^1(M),$$ i.e. $E'_\e(u)\equiv0$. In this way, properties such as stability or finite Morse index of a solution $u$ are defined as usual, i.e. with respect to the bilinear form corresponding to $E_\e '' (u)(\cdot,\cdot)$, the second derivative of the energy in $H^1(M)$.

In \cite{Guaraco}, based on the recent works \cite{HutchinsonTonegawa,TonegawaWickramasekera}, the following theorem was proved.
\subsection*{Theorem A}{\it
Let $M$ be a $n$-dimensional closed Riemannian manifold and $u_{k}$ a sequence of solutions to \text{(\ref{allencahn})} in $M$, with $\e=\e_k\to0$. Assume that their Morse indices, $\sup_M |u_k|$ and $E_{\e_k}(u_k)$ are bounded sequences. Then, as $\e_k\to 0$, its level sets accumulate around a minimal hypersurface, i.e. a critical point of the area functional, smooth and embedded outside a set of Hausdorff dimension at most $n-8$.}

\

In the same work this theorem was applied to a sequence of solutions obtained by a single parameter min-max construction. Such solutions have Morse index less than or equal to 1. As a corollary one concludes the celebrated Almgren-Pitts existence theorem for minimal hypersurfaces. 

This approach to Almgren-Pitts theorem can be interpreted as a converse of the results of Pacard and Ritor\'e \cite{PacardRitore}. They proved that given a non-degenerated oriented separating minimal hypersurface on $M$, there exists a sequence of solutions to the Allen-Cahn equations, with $\e\to0$, whose energy accumulates around this minimal hypersurface.

For more references on this analogy we refer the reader to the works \cite{Ilmanen,MizunoTonegawa,PisanteAllen13,Pacard,TonegawaSurvey}.

\subsection*{List of results and organization} In Section \ref{sec:least} we discuss properties of solutions to (\ref{allencahn}) with \textit{least positive energy}. These are solutions with energy equal to $$i_\e=\inf \{ E_\e(u) : u \in H^1(M),\  E'_\e(u)\equiv 0 \text{ and } E_\e(u)>0 \}.$$ This section is motivated by the work of Mazet-Rosenberg on the minimal hypersurfaces of least area \cite{Rosenberg}. We show (see Theorem \ref{least_energy}) \subsection*{Theorem 1}\textit{ $i_\e$ is attained by solutions which are either stable or obtained by min-max and have index 1. Moreover, $\infty> \liminf_{\e\to 0} i_\e > 0$ and, by Theorem A, the limit-interface of a sequence of such solutions is a smooth embedded minimal hypersurface. For generic metrics on $M$ with ${\rm Ric}_M>0$ this limit-interface is a minimal hypersurface of least area as defined by Mazet-Rosenberg. }

\

We end Section \ref{sec:least} by improving a result proved in \cite{GSmith} by a bifurcation analysis (see Proposition \ref{nosolutions}). 
\subsection*{Proposition 2}{\it If $\e>0$ is big enough, then the only solutions to (\ref{allencahn}) are the constants where the potential is critical.}

\

Our proof is different and uses only geometric inequalities. The result is improved in the sense that we provide an estimate for how big $\e$ must be in terms of the Cheeger constant of $M$.

Concerning the number of solutions to equation (\ref{allencahn}), G. Smith \cite{GSmith} recently proved that for generic metrics on a closed manifold $M$ there are only finitely many solutions for a fixed $\e>0$. On the other hand, even if an infinite number of solutions cannot be expected in general, standard min-max methods can be applied to show that the number of solutions always grows to infinity as $\e\to 0$. 

More precisely, Section \ref{sec:min-max} contains a cohomological min-max construction of solutions to equation  (\ref{allencahn}). Using the Fadell-Rabinowitz cohomological index ${\rm Ind}_{\Z/2}$ we define a sequence of families $\mathcal{F}_p$ of compact subsets of $H^1(M)$, for $p\in \N$, in the following way. An element $A\in\mathcal{F}_p$ is defined as a symmetric compact subset with a topological complexity given by ${\rm Ind}_{\Z/2}(A)\geq p+1$. Since the families $\mathcal{F}_p$ are decreasing on $p$, the min-max energy values $$c_\e(p)= \inf_{A \in {\mathcal{F}}_p} \sup_{x \in A}E_\e(x)$$ form an increasing sequence. 

Denote by $K_a$ the critical points of $E_\e$ with energy equal to $a$. An application of standard min-max methods yields (see Theorem \ref{thm:minmax})

\subsection*{Theorem 3.} \textit{ Fix $\e>0$. Then
\begin{enumerate}
\item For every $p \in \N$, it holds $c_\e(p) \leq E_\e(0) = \frac{{\rm Vol}(M)}{\e} W(0)$.
\item If $c_\e(p)< E_\e(0)$, then there exists a solution $u \in K_{c_\e(p)}$ with $|u|\leq 1$ and Morse index less or equal than $p$. Moreover, if $c_\e(p)=c_\e(p+k)<E_\e(0)$ for some $k \in \N$, then there are infinitely many solutions with the same energy and index bound.
\item $c_\e(p) = E_\e(0)$, for $p$ large enough (depending on $\e>0$ and $M$).
\end{enumerate}
} 

\

This result does not rule out the possibility that $c_\e(p)=E_\e(0)$ for a fixed $p$ and every $\e$ small. So it still does not follows that the number of solutions must grow as $\e\to 0$. However, since $E_\e(0) = {\rm Vol}(M)W(0)/\e \to \infty$ it is enough to show that for every $p$ it holds $\limsup_{\e\to 0} c_\e(p) < \infty$.

Sections \ref{sec:upperbound} and \ref{sec:bounds} contain the proofs of the following upper and lower sublinear bounds for the energy values of these min-max solutions, respectively.

\subsection*{Theorem 4.}\textit{There exists a constant $C=C(M,W)>1$ such that the min-max values $c_\e(p)$ satisfy the following sublinear bounds $$C^{-1}p^{\frac{1}{n}} \leq \liminf_{\e \to 0} c_\e(p) \leq \limsup_{\e \to 0} c_\e(p) \leq C p^{\frac{1}{n}}$$ for all $p \in \N$.}    

\

The proofs of these sublinear bounds are inspired by similar computations by Gromov \cite{Gromov}, Guth \cite{Guth} and Marques-Neves \cite{MarquesNevesInfinite} for sweepouts of the area functional. In this sense, our work brings the analogy between phase transitions and minimal hypersurfaces to a high parameter global variational context.

Combining Theorem 3 with the upper bound from Theorem 4, we obtain

\subsection*{Corollary 5.} \textit{On any closed Riemanian manifold the number of solutions to the elliptic Allen-Cahn equation (\ref{allencahn}) goes to infinity as $\e\to 0$. Moreover, for every $p$, Theorem A implies that the min-max solutions at the level $c_\e(p)$ have a limit-interface that is a smooth embedded minimal hypersurface}.

\

This article ends in Section \ref{sec:comp}, where we compare the sequence of values $\liminf_{\e \to 0} c_\e(p)$ with the widths $\omega_p(M)$ of the area functional on $M$, as defined by Marques-Neves \cite{MarquesNevesInfinite}. These widths form a sequence of real values which Gromov \cite{Gromov} proposed to consider as a nonlinear spectrum of $M$, by analogy with the min-max construction of the eigenvalues of the Laplacian. In the same way, the sequence $l_p(M)=2\sigma^{-1} \liminf_{\e \to 0} c_\e(p)$ can be considered as phase transition nonlinear spectrum of $M$, where $\sigma$ is an energy renormalization constant that depends only on $W$ (see Notation below). We show
 
\subsection*{Theorem 6.} $$\omega_p(M) \leq l_p(M) = 2\sigma^{-1}\liminf_{\e \to 0} c_\e(p),$$ for every $p \in \N$.

\

In \cite{MarquesNevesIndex}, Marques and Neves showed that $\omega_p(M)$ is achieved by the area, with multiplicity, of a minimal hypersurface $V_p \subset M$ with Morse index $\leq p$. Theorem 6 implies that the area of $V_p$ is not larger than the area of the minimal hypersurface obtained by applying Theorem A to the solutions given by Theorem 3, whose area with multiplicity is $l_p(M)$.

\subsection*{Remark 1.} In light of the analogy with minimal hypersurfaces, we have chosen to work with cohomological families. The sets $A \in \mathcal{F}_p$ may be seen as the analogue of Gromov-Guth and Marques-Neves \cite{Gromov,Guth,MarquesNevesInfinite} high-parameter cohomological sweepouts in the context of phase transitions.  However, there are other natural candidates for the families $\mathcal{F}_p$ of the min-max construction. 

The topological complexity of these families may be described in terms of other topological invariants such as homotopy, homology, and Lusterik-Schnirelmann category. This approach is reminiscent of Lusternik-Schnirelmann theory of critical points, see \cite{LusternikSchnirelmann}. Some examples of such families are considered in the works of Krasnoselskii \cite{Krasnoselskii}, Ambrosetti-Rabinowitz \cite{AmbrosettiRabinowitz}, Bahri-Berestycki \cite{BahriBerestycki} and Bahri-Lions \cite{BahriLions}, and a general account of a min-max theory in this setting is developed by N. Ghoussoub in \cite{Ghoussoub-A,Ghoussoub}. The techniques we employ can be applied to these other families to obtain existence results and sublinear bounds for the min-max values of these other for families.

\subsection*{Remark 2.} For certain potentials the Morse index of a solution gives a lower bound for its energy. This observation has been used, e.g. Bahri-Lions \cite{BahriLions}, to prove sublinear bounds for the energy of min-max solutions in some cases. We should not expect such a thing to hold for solutions to equation (\ref{allencahn}) in view of the analogy with the theory of minimal hypersurfaces. In fact, remember that in this analogy the area of a surface corresponds to the energy of a solution and there are examples of minimal surfaces showing that the index and the area are not related. It was shown in \cite{ColdingMinicozziStable,Dean,Kramer} that there is an open set of metrics on $\mathbb{S}^3$, for which there exists stable surfaces with area going to infinity. Conversely, there also exist examples of sequences of surfaces for which the Morse Index is unbounded while the area remains bounded (see \cite{Carlotto,Ambrozio}). It makes sense to expect the existence of similar examples for solutions to (\ref{allencahn}). 

\subsection*{Notation.} \label{notation} Along this work we will use the following notation \hfill

\medskip
\begin{tabular}{lll}
$W$ && a double-well potential (see \ref{sec:least})\\
$\sigma$ && the energy constant $\sigma=\int_{-1}^{1}\sqrt{W(s)/2}\,ds$\\  
$\mathrm{Inj(M)}$ && the injectivity radius of $M$\\
$H^1(M)$ && the Sobolev space of $L^2(M)$ functions with weak \\ &&derivatives also in $L^2(M)$\\
$\mathcal{H}^\rho$ && the Riemannian $\rho$-dimensional Hausdorff measure in $M$\\
$H^j(X;\Z/2)$ && the $p$-th Alexander-Spanier cohomology group of $X$ with\\ &&coefficients in $\Z/2$\\
$\mathrm{Ind_{\Z/2}}$ && the Fadell-Rabinowitz cohomological $\Z/2$-index (see \ref{sec:min-max})\\
$\mathrm{dist}_K(\cdot,A)$ && the distance function from a closed set $A \subset K$\\
${\bf I}_k(M,\Z/2)$ && the space of $k$-dimensional integral currents modulo $2$ in $M$\\ &&(see \cite[4.2.26]{Federer})\\
$\mathcal{Z}_k(M,\Z/2)$ && the space of integral currents $T \in {\bf I}_k(M,\Z/2)$ with $\partial T=0$\\
${\bf M}$ && the mass of a current $T \in {\bf I}_k(M,\Z/2)$\\
$\mathcal{F}(T)$ && the flat norm of a current $T \in {\bf I}_k(M,\Z/2)$\\
$B_r^{\mathcal{F}}(T)$ && the ball centered at $T$ with radius $r$ in $\mathcal{Z}_k(M,\Z/2)$, in the\\ &&flat norm\\
$\llbracket U \rrbracket$ && the integral $n$-dimensional mod $2$ current associated with\\ &&an open set of finite perimeter $U \subset M$.
\end{tabular}
\medskip

We will use also the following notation concerning cubical complexes. Given $m \in \N$, we will write $Q^m=[-1,1]^m$. We regard this space as a cubical complex whose cells are given by $\alpha = \alpha_1 \otimes \ldots \otimes \alpha_m$ where $\alpha_i$ is either $[-1]$,$[0$], $[1]$, $[-1,0]$, or $[0,1]$. Following \cite{pitts} and \cite{MarquesNevesWillmore}, we consider the $1$-dimensional cubical complex $Q(1,k)$ on $Q^1$ whose 0-cells and 1-cells are
	\[\left\{[i\cdot 3^{-k}]\right\}_{i=-3^k}^{3^k} \quad \mbox{and} \quad \left\{[i3^{-k}, (i+1)3^{-k}]\right\}_{i=-3^k}^{3^k-1}, \]
respectively, and the $m$-dimensional cubical complex on $Q^{m}$
	\[Q(m,k) = Q(1,k) \otimes \ldots \otimes Q(1,k) \quad \mbox{($m$ times)}\]
Given a subcomplex $X$ of some $Q(m,k)$, we denote by $X(j)$ the cubical subcomplex of $Q(m,k+j)$ obtained by further subdividing the $1$-cells of $X$ in $3^j$ intervals, that is, $X(j)$ is the union of all cells of $Q(m,k+j)$ whose support is contained in some cell of $X$. Given $q \in \N$, $X(j)_q$ will denote the set of $q$-cells of $X(j)$. We say that two vertices $x,y \in X(j)_0$ are \emph{adjacent} if they are vertices of a common $1$-cell in $X(j)_1$. Similarly, we denote by $I(m,k)$ and $I_0(m,k)$ the usual cubical complexes in $I^m=[0,1]^m$ and in its boundary $\partial I^m$, respectively. Moreover, given $j,j'\in \N$, we denote by ${\bf n}(j,j'):X(j)_0 \to X(j')_0$ the map defined by requiring that ${\bf n}(j,j')(x)$ is the closest vertex of $X(j')_0$ to $x \in X(j)_0$.

\subsection*{Acknowledgements.} This work is partially based on the first author Ph.D. thesis at IMPA. We are grateful to our advisor, Fernando Cod\'a Marques, for his constant encouragement and support. We are also grateful to the Mathematics Department of Princeton University for its hospitality. The first drafts of this work were written there while visiting during the academic year of 2015-16.

\section{Low energy levels of \texorpdfstring{$E_\e$}{E_e}}\label{sec:least}
In this section, we present a variational study of the lowest critical levels of 
	$E_\e(u) =  \int_M\frac{\e}{2} |\nabla u|^2 + \frac{W(u)}{\e} , u \in H^1(M),$
where $W$ is assumed to be a double-well potential as in \cite{Guaraco}:

\

\begin{itemize}
\item [A.\ ] $W\geq 0$, with exactly three critical points, two of which are non-degenerated minima at $\pm1$, with ${W(\pm1)=0}$ and $W''(\pm1)>0$, and  the third a local maximum $\gamma\in(-1,1)$.
\end{itemize}

\

More precisely, we are concerned with the existence and variational properties of least \textit{positive} energy solutions to the Allen-Cahn equation:
	\begin{equation} \label{eq:eac}
		-\e \Delta u + \frac{1}{\e}W'(u) = 0.
	\end{equation}
By restricting our description to solutions with positive energy we immediately exclude out the trivial solutions $\pm1$, whose energy is zero. Of course, for large values of $\e$, the least positive energy solution might be the constant $\gamma$, which is a critical point of $W$. In fact, as it follows from Proposition \ref{nosolutions}, this is the case whenever $\e>\e_0=2\sqrt{C}/h(M)$, where $C$ is a positive constant depending only on $W$ and $h(M)$ is the Cheeger constant of $M$. 

However, by \cite{Guaraco}, we already know that for small values of $\e$, non-constant solutions with energy less than $E_\e(\gamma)$ always exist. Because of this, we are also interested in describing the limit behavior of the least positive energy solutions as $\e\to 0$, in terms of the minimal hypersurface that arises as its limit-interface. In some cases, we are able to conclude that this limit-interface is in fact a minimal hypersurface with least energy, in the sense of Rosenberg-Mazet \cite{Rosenberg}. 
	
	\
	
The main result of this section is the following:

\begin{theorem} \label{least_energy} 

\

\begin{enumerate}
\item For every $\e>0$, there exists a solution $u_\e$ of (\ref{eq:eac}) with least positive energy, i.e. $$E_\e(u_\e)=\min\{ E_\e(u) : u \in H^1(M),\ E_\e'(u)=0 \text{ and } E_\e(u)>0 \}.$$

\item The least energy solution $u_\e$ is either stable or it is obtained by a one-parameter min-max and has Morse index 1. In that case $$E_\e(u_\e) = c_\e = \inf_{h \in \Gamma} \sup_{t\in[-1,1]} E_\e(h(t)),$$ where $\Gamma = \{h \in C([-1,1]: H^1(M)) :  h(\pm1)\equiv\pm1\}.$

\  

\item $\liminf_{\e\to 0} E_\e(u_\e)>0$. In particular, there is a rectifiable integral varifold $V$ such that \begin{enumerate}
\item [(i)] $\|V\|=\frac{1}{2\sigma}\liminf_{\e\to 0} E_\e(u_\e)$;
\item [(ii)] $V$ is stationary in $M$;
\item [(iii)] $\mathcal{H}^{n-8+\gamma}(\operatorname{sing}(V))=0,$ for every $\gamma>0$;
\item [(iv)] $\operatorname{reg}(V)$ is an embedded minimal hypersurface.
\end{enumerate}

\

\item If in addition, $3\leq n\leq 7$ and the metric on $M$ is bumpy with ${\rm Ric}_M >0$, then ${\rm supp} \| V\|$ is a smooth minimal hypersurface of least area among all minimal hypersurfaces, as studied by Mazet-Rosenberg in \cite{Rosenberg}. In particular, $V$ has multiplicity one and it is realized by a connected orientable smooth hypersurface of index one.

\end{enumerate}  
\end{theorem}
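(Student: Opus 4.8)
The plan is to prove the four parts essentially in order, since each builds on the previous one.

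For part (1), I would first establish that $i_\e := \inf\{E_\e(u) : E_\e'(u)=0, \ E_\e(u)>0\}$ is positive, so that the infimum is not realized in the limit by the trivial solutions $\pm 1$. This should follow from a lower bound on the energy of any non-constant solution (or of the constant $\gamma$), using elliptic estimates and the fact that solutions satisfy $|u|\le 1$ by the maximum principle (this bound must be proven first, via a standard comparison argument with the ODE $-\e^2 w'' + W'(w)=0$ or a direct touching argument). Then, to show the infimum is attained, I would take a minimizing sequence $u_k$ of solutions with $E_{\e}(u_k) \downarrow i_\e$; since their energies are bounded and $|u_k|\le 1$, standard compactness for solutions of \eqref{eq:eac} (elliptic regularity, Arzel\`a--Ascoli) gives a subsequence converging in $C^2$ to a solution $u_\e$ with $E_\e(u_\e) = i_\e$. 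The only thing to rule out is $E_\e(u_\e)=0$, i.e. $u_\e = \pm1$, which is exactly what the positivity bound $i_\e>0$ prevents, since energy is continuous under $C^2$-convergence.

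For part (2): consider the mountain-pass value $c_\e = \inf_{h\in\Gamma}\sup_t E_\e(h(t))$. The constant path through $0$ (or any path from $-1$ to $+1$ passing through the constants, or a path obtained by a cutoff interpolation) lies in $\Gamma$, so $c_\e < \infty$; since $W>0$ away from $\pm1$, any path from $-1$ to $+1$ must pass through a function with positive energy, so $c_\e > 0$, and in fact $c_\e \ge i_\e$ is false in general — rather one has $i_\e \le c_\e$ once we know $c_\e$ is a critical value. I would apply the standard mountain-pass theorem (verifying Palais--Smale for $E_\e$ on $H^1(M)$, which is standard for this energy since $W$ has quadratic-type growth or can be modified outside $[-1,1]$ without changing solutions) to get a critical point at level $c_\e$ of Morse index $\le 1$. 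Now take the least-energy solution $u_\e$ from part (1). If $u_\e$ is unstable, its index is $\ge 1$; I claim it must then be obtained by min-max at level exactly $c_\e$ with index exactly $1$. The key point is: an unstable solution with $E_\e(u_\e)=i_\e$ cannot have index $\ge 2$, because one can use a two-parameter deformation in the two negative directions to push energy below $i_\e$ while staying at a solution-free level — more carefully, one argues that $i_\e = c_\e$ by showing $c_\e \le i_\e$ (constructing a path through $u_\e$ along its unstable direction down to $\pm 1$, using that the negative gradient flow from slightly perturbed $u_\e \pm \delta\varphi$ converges to $\pm1$ — this uses that $\pm1$ are the only solutions with energy $< i_\e$) and $c_\e \ge i_\e$ is immediate since the mountain-pass critical point has positive energy hence energy $\ge i_\e$. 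Then index exactly $1$ follows because index $\ge 2$ at a mountain-pass level would contradict the structure of the min-max, or because one can deform below $c_\e$.

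For part (3): this is where I invoke Theorem A. I first need $\liminf_{\e\to 0} i_\e > 0$; by part (2) and the variational characterization, $i_\e = \min(E_\e(\gamma), c_\e)$ in the constant case or $i_\e = c_\e$, and $E_\e(\gamma) = \mathrm{Vol}(M)W(\gamma)/\e \to \infty$, so for small $\e$ we have $i_\e = c_\e$; a lower bound $c_\e \ge c_0 > 0$ uniform in $\e$ follows from the a priori estimate that any solution with positive energy has energy bounded below independently of $\e$ (since the transition from near $-1$ to near $+1$ costs at least roughly $\sigma \cdot \mathcal{H}^{n-1}$ of the smallest separating hypersurface — but a soft argument suffices: if $E_\e(u)\to 0$ along solutions then $u\to \pm1$ in $L^1$, and a propagation/unique-continuation argument forces $u\equiv\pm1$, contradiction). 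Given the uniform lower bound and the uniform upper bound $\limsup c_\e < \infty$ (from the interpolation test path, whose energy is $O(1)$), together with the index bound $\le 1$ from part (2) and $|u_\e|\le 1$, the hypotheses of Theorem A are met; its conclusion gives the stationary integral varifold $V$ with $\|V\| = \tfrac{1}{2\sigma}\liminf_\e E_\e(u_\e)$ and the stated regularity (singular set of dimension $\le n-8$, regular part an embedded minimal hypersurface). Parts (i)--(iv) are then a direct restatement of Theorem A plus the standard energy-to-mass identity $\lim E_\e(u_\e) = 2\sigma \|V\|$.

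For part (4): assume $3\le n\le 7$ (so $V$ is smooth and embedded everywhere, by the dimension bound in (iii)) and the metric bumpy with $\mathrm{Ric}_M>0$. By part (2), $u_\e$ has index $\le 1$, so by Theorem A-type index bounds (lower semicontinuity of index, or a direct comparison between the second variation of $E_\e$ and of area), the limit hypersurface $\mathrm{supp}\|V\|$ has Morse index $\le 1$, counted appropriately with multiplicity. Now I would argue that $\mathrm{supp}\|V\|$ realizes the least area among all minimal hypersurfaces in $M$. Here is where Mazet--Rosenberg \cite{Rosenberg} enters: under $\mathrm{Ric}_M>0$, any minimal hypersurface is separating and there is a well-defined least-area minimal hypersurface $\Sigma_0$; by Pacard--Ritor\'e \cite{PacardRitore}, nondegeneracy of $\Sigma_0$ (guaranteed by bumpiness) produces a sequence of Allen-Cahn solutions $v_\e$ with energy $\to 2\sigma\,\mathrm{Area}(\Sigma_0)$ and with positive energy, hence $i_\e = c_\e \le E_\e(v_\e)$, giving $\liminf_\e i_\e \le 2\sigma\,\mathrm{Area}(\Sigma_0)$, i.e. $\|V\| \le \mathrm{Area}(\Sigma_0)$. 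Combined with the fact that $\mathrm{supp}\|V\|$ is itself a minimal hypersurface, hence has area $\ge \mathrm{Area}(\Sigma_0)$ by definition, we get equality $\|V\| = \mathrm{Area}(\Sigma_0)$; under $\mathrm{Ric}_M>0$ there are no stable minimal hypersurfaces, so multiplicity $\ge 2$ would force index $0$ on a component (stable) — impossible — hence multiplicity one, and $V$ is realized by a single connected orientable embedded minimal hypersurface; its index is $\ge 1$ (no stable ones) and $\le 1$ (from the solutions), hence exactly $1$.

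\medskip

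\textbf{Main obstacle.} The delicate part is the dichotomy in part (2): proving cleanly that the least positive energy solution, when unstable, must sit at the mountain-pass level with index \emph{exactly} one. This requires a careful deformation argument showing $i_\e = c_\e$, which in turn rests on the claim that $\pm1$ are the only solutions with energy below $i_\e$ and on controlling the negative gradient flow of $E_\e$ emanating from small perturbations of $u_\e$ in its unstable directions. Ruling out index $\ge 2$ needs a two-parameter sweepout lying at energies $< i_\e$ except at $u_\e$, which must be built by hand. A secondary subtlety is the uniform-in-$\e$ lower bound $\liminf i_\e > 0$: one must prevent energy from leaking to zero along a sequence of solutions, which is soft if one accepts that $L^1$-convergence to $\pm1$ plus the equation forces the solution to be constant, but deserves an explicit argument (e.g. via the monotonicity formula or a direct maximum-principle/unique-continuation step).
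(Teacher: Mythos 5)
Your overall route matches the paper's: item (1) by Palais--Smale compactness of a minimizing sequence plus ruling out collapse to $\pm1$; item (2) by building a path in $\Gamma$ through $u_\e$ along the first eigendirection and flowing down to $\pm1$ (the paper uses the parabolic Allen--Cahn flow and the comparison principle to decide which constant each branch reaches), then excluding index $\ge 2$ by a two-parameter deformation as in Marques--Neves; item (4) by comparing with the Pacard--Ritor\'e solutions concentrating on the Mazet--Rosenberg least-area hypersurface. Two points deserve comment. First, in item (1) the paper does not need any quantitative lower bound on the energy of nonconstant solutions for fixed $\e$: it simply observes that $W''(\pm1)>0$ makes $d^2E_\e(\pm1)$ an isomorphism, so $\pm1$ are \emph{isolated} critical points by the Morse Lemma, and a minimizing sequence of positive-energy critical points therefore cannot accumulate at $\pm1$. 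Your version, which asserts a lower bound "via elliptic estimates," is vaguer and, for fixed $\e$, is best justified exactly by this isolatedness.

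The genuine gap is in item (3). The "soft argument" you offer for $\liminf_{\e\to0}E_\e(u_\e)>0$ --- that $E_{\e_k}(u_k)\to 0$ would force $u_k\to\pm1$ in $L^1$ and then unique continuation would force $u_k\equiv\pm1$ --- does not work: the functions $u_k$ change with $\e_k$, so there is no single function to which a unique-continuation or propagation argument applies, and $L^1$-closeness to $\pm1$ is compatible with $u_k$ being nonconstant. A direct computation at a point $p_k$ with $u_k(p_k)=\gamma$ (such points exist because $u_k-\gamma$ changes sign) only gives $\int_M W(u_k)/\e_k \gtrsim \e_k^{\,n-1}\to 0$, so no contradiction arises at scale $\e_k$. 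The paper's proof genuinely needs the monotonicity formula (Lemma \ref{technical1}(4)), together with the bounded-discrepancy estimate and the lower bound $W(u_k)\ge c_2$ on $B_{\e_k/2}(p_k)$, to propagate the $\e$-scale density lower bound up to a fixed macroscopic radius $\tilde\rho$, yielding $E_{\e_k}(u_k)\ge c\,\tilde\rho^{\,n-1}$ uniformly. You mention the monotonicity formula only as a parenthetical alternative; it is in fact the indispensable step, and without it this part of the argument fails.
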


We prove each item of Theorem \ref{least_energy} separately. We will need the following two technical lemmas in which we point out other properties of $E_\e$. The proof of the first lemma can be found at the end of this section while the second lemma, concerning the semilinear heat flow for the equation, follows from the standard theory of semilinear parabolic equations \cite{HarauxCazenave}.  

Define $$e_\e(u)=\frac{\e |\nabla u|^2}{2}+\frac{W(u)}{\e}$$ and \textit{the discrepancy function} $$\xi_\e(u)=\frac{\e |\nabla u|^2}{2}-\frac{W(u)}{\e}.$$

\begin{lemma}\label{technical1}\

\begin{enumerate}
\item If $u$ is a critical point of $E_\e$ different from the constants $\pm 1$ or $\gamma$, then $|u|<1$ and the function $u-\gamma$ does not have a sign.

\item Palais-Smale condition: Let $u_k$ be a sequence of functions in $H^1(M)$ such that $|u|\leq 1$, $\sup_k E_\e(u_k) <\infty$ and $E'_\e(u_k) \to 0$. Then, there is a subsequence converging to a critical point of $E_\e$ in the $H^1(M)$-norm.

\item Bounded discrepancy: there exists $\e_1>0$ and $c_1 \in \R$ such that for all $\e \in(0,\e_1)$ we have $$\sup_M \xi_\e \leq c_1.$$

\item Monotonicity formula: there exist $\rho_1>0$ and $m>0$ such that for all $\e>0$, $0<\rho<\rho_1$ and $p\in M$ we have  $$\frac{d}{d\rho} \bigg(e^{m\rho} \rho^{-n+1} \int_{B_\rho(p)}e_\e(u) \bigg) \geq e^{m\rho} \rho^{-n+1} \int_{B_\rho(p)}(-\xi_\e)(u).$$ 

\item Lower bound for the potential: there exists $\e_2>0$ and $c_2>0$ such that for every $u\in H^1(M)$ that is a critical point of $E_\e$, i.e. $E'_\e(u)=0$, with $\e \in (0,\e_2)$ and every $p\in M$ with $u(p)=\gamma$, it holds  $$W(u)\big|_{B_{\e/2}(p)} \geq c_2 .$$

\end{enumerate}
\end{lemma}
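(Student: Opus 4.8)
The plan is to establish the five items in order, since Parts (4) and (5) use the discrepancy control of Part (3), and Part (1) is the only one not requiring $\e$ to be small. \textbf{Part (1):} writing \eqref{eq:eac} as $\e^2\Delta u=W'(u)$ and testing against $(u-1)^+\in H^1(M)$ gives
\[
\e^2\int_{\{u>1\}}|\nabla u|^2=-\int_{\{u>1\}}W'(u)(u-1)\le 0,
\]
since $1$ is a nondegenerate minimum of $W$ with no critical point of $W$ beyond it, so that $W'>0$ on $(1,\infty)$. Hence $\nabla u\equiv 0$ on $\{u>1\}$, and as $M$ is connected this forces $u\le 1$; if $u(p)=1$ somewhere, then $v=1-u\ge 0$ solves $\e^2\Delta v-W''(\zeta)v=0$ with $W''(\zeta)>0$ near $1$, and the strong maximum principle gives $u\equiv 1$, which is excluded. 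The reflected argument at $-1$ yields $u>-1$. For the sign statement, integrating \eqref{eq:eac} gives $\int_M W'(u)=\e^2\int_M\Delta u=0$; if $u\ge\gamma$ on $M$ then, since $|u|<1$, $W'(u)\le 0$ everywhere and $W'(u)<0$ on $\{u>\gamma\}$ (because $W'<0$ on $(\gamma,1)$), so $\int_M W'(u)<0$ unless $u\equiv\gamma$ --- a contradiction; symmetrically $u\le\gamma$ is impossible, so $u-\gamma$ changes sign.

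\textbf{Part (2):} given $(u_k)\subset H^1(M)$ with $|u_k|\le 1$, $\sup_k E_\e(u_k)<\infty$ and $E'_\e(u_k)\to 0$ in $H^{-1}(M)$, the energy bound and the $L^\infty$-bound give a uniform $H^1(M)$-bound, so, along a subsequence, $u_k\rightharpoonup u$ in $H^1(M)$, $u_k\to u$ in $L^2(M)$ and a.e. Then $E'_\e(u_k)(u_k-u)\to 0$; since $|W'(u_k)|$ is uniformly bounded and $u_k-u\to 0$ in $L^2(M)$, the term $\tfrac1\e\int_M W'(u_k)(u_k-u)$ tends to $0$, while $\int_M\nabla u\cdot\nabla(u_k-u)\to 0$ by weak convergence; subtracting, $\int_M|\nabla(u_k-u)|^2\to 0$, hence $u_k\to u$ in $H^1(M)$ and $E'_\e(u)=0$ by continuity of $E'_\e$.

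\textbf{Parts (3) and (4):} combining the Bochner formula with $\e^2\Delta u=W'(u)$, the $\e^{-1}W''(u)|\nabla u|^2$ terms cancel and one obtains the identity
\[
\Delta\xi_\e=\e|\nabla^2 u|^2+\e\,{\rm Ric}(\nabla u,\nabla u)-\e^{-3}W'(u)^2 .
\]
On $\R^n$ (with $\e=1$, ${\rm Ric}\equiv 0$) this is the identity underlying Modica's inequality $\xi_\e\le 0$, proved by a $P$-function argument that uses the Hessian structure rather than a naive comparison, the last term having the wrong sign for a direct maximum principle. On $M$ the Ricci term is controlled by $\sup_M|{\rm Ric}|$, and applying the maximum principle to $\xi_\e$ modified by suitable lower-order quantities --- so as to absorb both $-\e^{-3}W'(u)^2$ and the curvature --- yields the uniform bound $\sup_M\xi_\e(u)\le c_1$ for all solutions $u$ with $|u|\le 1$ and all $\e\in(0,\e_1)$; alternatively one imports the discrepancy estimate of Hutchinson--Tonegawa. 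For Part (4), for a critical point $u$ the stress--energy tensor $T_\e=e_\e(u)g-\e\,du\otimes du$ is divergence-free, with trace $(n-1)e_\e(u)-\xi_\e(u)$. Testing ${\rm div}\,T_\e=0$ against the radial field $\rho\,\partial_\rho$ on a geodesic ball $B_\rho(p)$ with $\rho<\rho_1<\mathrm{Inj}(M)$ and performing the standard rearrangement produces $\tfrac{d}{d\rho}\big(\rho^{-n+1}\int_{B_\rho(p)}e_\e(u)\big)$ on one side and $\rho^{-n+1}\int_{B_\rho(p)}(-\xi_\e)(u)$ on the other; the deviation of $\nabla(\rho\,\partial_\rho)$ from the identity, of size $O(\rho)$ and governed by the geometry of $M$, is exactly what the factor $e^{m\rho}$, with $m=m(M)$ large, absorbs.

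\textbf{Part (5), and the main obstacle:} by the interior gradient estimate for \eqref{eq:eac} (from $|u|\le 1$ and elliptic regularity after rescaling by $\e$, or from Part (3)) we have $\e|\nabla u|\le C=C(M,W)$; hence if $u(p)=\gamma$ then $|u(x)-\gamma|\le C\,\mathrm{dist}(x,p)/\e$, so on $B_{r\e}(p)$ with $r=r(M,W)>0$ small enough $u$ stays within $\tfrac14(1-|\gamma|)$ of $\gamma$, and since $W$ is continuous with $W(\gamma)>0$ this gives $W(u)\ge c_2>0$ there (whence also $\int_{B_{r\e}(p)}W(u)\ge c_2\,\mathcal{H}^n(B_{r\e}(p))\ge c_2'\e^n$). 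The genuinely delicate step is Part (3): the naive maximum principle on $\xi_\e$ fails because of the $-\e^{-3}W'(u)^2$ term, so one must run the refined Modica/$P$-function computation carrying the curvature correction (or import Hutchinson--Tonegawa). Everything else is either elementary (Parts 1 and 5) or standard compactness (Part 2), and Part (4) is a routine --- if bookkeeping-heavy --- consequence of Part (3) through the stress--energy tensor.
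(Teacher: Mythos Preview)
Your proposal is correct and follows essentially the same route as the paper. The paper's own proof is terser: it argues Part~(1) by looking at a local maximum with $u(p)>1$ and testing against a bump function (your test against $(u-1)^+$ is an equivalent packaging), and for the sign of $u-\gamma$ it tests against the constant $1$ exactly as you do; it then defers Parts~(2) and~(4) to \cite{Guaraco}, defers Part~(3) to \cite{HutchinsonTonegawa} with the remark that the argument is local and carries over to general metrics, and proves Part~(5) by the same rescaling-plus-gradient-estimate you describe (citing \cite{HanLin} for the interior estimate on the rescaled equation).

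Two small remarks. First, your treatment of Part~(3) is a sketch rather than a proof: you write down the correct identity $\Delta\xi_\e=\e|\nabla^2 u|^2+\e\,\mathrm{Ric}(\nabla u,\nabla u)-\e^{-3}W'(u)^2$ and then say one runs a ``modified'' maximum principle without specifying the modification; this is honest, and the paper does no better (it simply cites Hutchinson--Tonegawa), but be aware that the actual $P$-function argument is where the work is. Second, in Part~(5) both you and the paper obtain the lower bound $W(u)\ge c_2$ on a ball $B_{r\e}(p)$ for some $r=r(M,W)>0$, not literally on $B_{\e/2}(p)$ as the lemma states; this is harmless for the application (the proof of Theorem~\ref{least_energy}(3) only needs a ball of radius proportional to $\e$), and the paper has the same discrepancy between statement and proof.
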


Consider the parabolic Allen-Cahn equation given by 
\begin{equation}\label{parabolic}\partial_t u - \Delta u + W'(u)/\e^2 = 0. \end{equation}

Define $$\mathcal{S}=\{ u \in C^3(M) : |u|\leq 1 \}$$

\begin{lemma}\label{technical2} \

\begin{enumerate}
\item For each $u\in \mathcal{S}$, there is a unique solution $\Phi^t(u)$ of equation (\ref{parabolic}) which exists for all $t>0$.
\item For each $t>0$, $\Phi^t : \mathcal{S} \to \mathcal{S}$.
\item For each $u \in \mathcal{S}$, $E_\e(\Phi^t(u)) \leq E_\e(\Phi^s(u))$ if $t > s \geq 0$ and equality holds if and only if $u$ is a critical point of $E_\e$. In that case $\Phi^t(u)=u$ for all $t>0$.
\item For each $u, v \in \mathcal{S}$, such that $u<v$, it holds $\Phi^t(u)<\Phi^t(u)$ for all $t>0$.
\item For each $u\in \mathcal{S}$, there exist $ \Phi^{\infty}(u) \in \mathcal{S}$ which is critical point of $E_\e$ and a sequence $t_k \to \infty$, such that $\Phi^{t_k}(u) \to \Phi^{\infty}(u)$ in $H^1(M)$.

\end{enumerate}
\end{lemma}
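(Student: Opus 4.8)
As indicated, everything follows from the standard theory of semilinear parabolic equations (cf.\ \cite{HarauxCazenave}); the plan is to organize the five items around three ingredients: (a) well-posedness via the variation-of-constants formulation and the heat semigroup $e^{t\Delta}$ on the closed manifold $M$; (b) the parabolic maximum principle; and (c) the gradient-flow structure of \eqref{parabolic} with respect to $E_\e$.

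For items (1)--(2): since $W\in C^2$ and we only ever evaluate $W'$ on $[-1,1]$, I would first replace $W$ by a function agreeing with it on $[-2,2]$ whose derivative is globally Lipschitz on $\R$. For the modified equation, Picard iteration in $C([0,T],C^0(M))$ applied to the mild formulation $u(t)=e^{t\Delta}u_0-\e^{-2}\int_0^t e^{(t-s)\Delta}W'(u(s))\,ds$ produces a unique mild solution on a maximal interval, and the global Lipschitz bound together with a Gronwall estimate rules out finite-time blow-up, so the solution is global. Parabolic Schauder and $L^p$ estimates then upgrade it to a classical solution, smooth for $t>0$, with persistence of $C^3$-regularity from $C^3$ initial data. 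The comparison principle now applies: the constants $\pm1$ are stationary solutions of \eqref{parabolic} since $W'(\pm1)=0$, so $-1\le u_0\le 1$ forces $-1\le\Phi^t(u_0)\le1$; in particular the modification of $W$ was vacuous and $\Phi^t\colon\mathcal{S}\to\mathcal{S}$ for every $t>0$.

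For item (3): multiplying \eqref{parabolic} by $\e$ exhibits it, up to a time rescaling, as the negative $L^2$-gradient flow of $E_\e$, because the $L^2$-gradient of $E_\e$ at $u$ is $-\e\Delta u+\e^{-1}W'(u)$; hence $\frac{d}{dt}E_\e(\Phi^t(u))=-\e\int_M|\partial_t u|^2\le0$, with equality at a time $t$ if and only if $\partial_t u(\cdot,t)\equiv0$, i.e.\ $\Phi^t(u)$ is a critical point of $E_\e$. Monotonicity then shows that an equality $E_\e(\Phi^t(u))=E_\e(\Phi^s(u))$ with $t>s$ forces $\partial_\tau u\equiv0$ on $[s,t]$, so $u$ is a critical point, and uniqueness from (1) gives $\Phi^\tau(u)=u$ for all $\tau>0$. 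For item (4), if $u<v$ then $w=\Phi^t(v)-\Phi^t(u)$ solves the linear parabolic equation $\partial_t w=\Delta w-\e^{-2}c(x,t)\,w$ with $c(x,t)=\int_0^1 W''\big(\theta\Phi^t(v)+(1-\theta)\Phi^t(u)\big)\,d\theta$ bounded (its arguments lie in $[-1,1]$), and $w(\cdot,0)>0$, so the strong maximum principle gives $w>0$ for all $t>0$, that is $\Phi^t(u)<\Phi^t(v)$.

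Finally, item (5): since $W\ge0$, $E_\e(\Phi^t(u))$ is bounded below, so by (3) it decreases to a limit and $\int_0^\infty\!\!\int_M|\partial_t u|^2\,dt<\infty$; choose $t_k\to\infty$ with $\int_M|\partial_t u(\cdot,t_k)|^2\to0$, i.e.\ $\|E_\e'(\Phi^{t_k}(u))\|_{L^2}\to0$. The orbit $\{\Phi^t(u):t\ge1\}$ is uniformly bounded in, say, $C^{2,\alpha}(M)$ by interior parabolic estimates using $|\Phi^t(u)|\le1$, so a subsequence converges in $C^2(M)$, hence in $H^1(M)$, to some $\Phi^\infty(u)\in\mathcal{S}$; continuity of $E_\e'$ together with $\|E_\e'(\Phi^{t_k}(u))\|_{L^2}\to0$ gives $E_\e'(\Phi^\infty(u))=0$. (Full convergence $\Phi^t(u)\to\Phi^\infty(u)$ would require a Łojasiewicz--Simon inequality, but is not needed here.) I expect the only mildly delicate points to be bookkeeping: persistence of $C^3$-regularity under the flow, and the uniform-in-time higher-order estimates used in (5); both are routine consequences of parabolic smoothing and the a priori $L^\infty$ bound, with no boundary terms since $M$ is closed.
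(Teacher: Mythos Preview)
Your proposal is correct and is precisely the kind of argument the paper has in mind: the paper does not actually prove Lemma~\ref{technical2} but merely states that it ``follows from the standard theory of semilinear parabolic equations \cite{HarauxCazenave}.'' Your organization around well-posedness via the heat semigroup, the comparison/maximum principle (using that $\pm 1$ are stationary), the gradient-flow identity $\frac{d}{dt}E_\e(\Phi^t(u))=-\e\int_M|\partial_t u|^2$, and compactness from uniform parabolic estimates is exactly the standard route, and your remark that only subsequential convergence is needed in (5) is on point.
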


\begin{proof}[Proof of Theorem \ref{least_energy} - Item (1)]  The set $$\{ u \in H^1(M) : \ E_\e'(u)=0 \text{ and } E_\e(u)>0 \}$$ is non-empty since $\gamma$ is always a critical point of $E_\e$ with positive energy. Since every $u$ in this set is bounded, i.e. $|u| < 1$ by Lemma \ref{technical1} (1), every minimizing sequence for $E_\e$ is precompact in $H^1(M)$ by the Palais-Smale condition, Lemma \ref{technical1} (2). Then, it suffices to prove that $\pm 1$ are isolated solutions of \eqref{eq:eac}. This follows from the Morse Lemma (see e.g. \cite[Section 9.2]{Ghoussoub}) provided we show that $d^2E_\e(\pm 1)$ are isomorphisms, where we denote by $d^2E_\e(u):H^1(M) \to H^1(M)$ the linear operator associated to the bilinear form $E_\e''(u)(\cdot, \cdot)$. Recall that $d^2E_\e(u)$ is given by (see \cite[Prop. 4.4]{Guaraco}):
	\[\left\langle d^2E_\e(u)v, w \right\rangle = \e \int_M \nabla v \cdot \nabla w + \frac{1}{\e} \int_M W''(u)vw.\]
Since $W''(\pm 1)>0$, by Hypothesis A, we see that $\left\langle d^2E_\e(\pm 1)v, v \right\rangle$ is always nonzero when $v\neq 0$. Hence, $d^2E_\e(\pm 1)$ is an isomorphism.

\end{proof}

\begin{proof}[Proof of Theorem \ref{least_energy} - Item (2)] Assume that $u_\e$ is not stable. We will construct a sweepout $h_\e \in \Gamma$ having $u_\e = h_\e(0)$ as a unique point of maximum for the energy. If $u_\e=\gamma$, then we can join $\pm 1 \in H^1(M)$ linearly to construct such a sweepout. Thus, we may assume $u_\e\neq \gamma$. By Lemma \ref{technical1} - Item (1), there are $x^{\pm} \in M$ such that $u_\e(x^-) < \gamma < u_\e(x^+)$. Let $v \in H^1(M)$ be a positive eigenfunction associated to the first eigenvalue of the stability operator of $E_\e''(u_\e)$. There exists $t_0\in(0,1)$ such that
	\[E_\e(u \pm tv) < E_\e(u_\e), \quad \mbox{whenever} \quad 0<t\leq t_0.\]
We will now describe how to join $u_\e\pm t_0v$ and $\pm 1$ using Lemma \ref{technical2}. First, we observe that $u \in \mathcal{S}$, by Lemma \ref{technical1} (1). From Lemma \ref{technical2} (5), we can find a sequence $\{t_i\}$ of positive real numbers such that $\Phi^{t_i}(u_\e+ t_0v)$ converges in $H^1(M)$ to a solution of \eqref{eq:eac} when $i \to \infty$. Moreover, the energy of $\Phi^t(u_\e + t_0v)$ is decreasing in $t$. Hence, $\Phi^{t_i}(u_\e + t_0 v)$ converges either to $+1$ or $-1$ in $H^1(M)$. We claim that
	$\Phi^{t_i}(u_\e + t_0v) \to 1, \mbox{as} \quad i \to \infty.$
In fact, notice that $u<u+t_0v$ and, from Lemma \ref{technical2} (4),
	$u = \Phi^t(u) < \Phi^t(u+t_0v),  \mbox{for all} \quad t \geq 0.$
In particular, there exists a neighborhood $V \subset M$ of $x^+$ such that $\Phi^t(u+t_0v)>\gamma$ in $V$ for all $t\geq 0$. Therefore, $\Phi^{t_i}(u+t_0v) \to 1$. Since the functions $\pm 1$ are isolated points of minimum for $E_\e$, we can find $\delta>0$ such that
	\[E_\e(u_\e)>E_\e(w), \quad \mbox{for all} \quad w \in B_\delta(1) \cup B_\delta(-1) \subset H^1(M).\]
Choose $i \in \N$ such that $||\Phi^{t_i}(u + t_0 v) - 1||<\delta$ and join $\Phi^{t_i}(u+ t_0 v)$ to $1$ by a segment contained in $B_\delta(1)$. It follows that the energy remains strictly below $E_\e(u_\e)$ along this path. We can now concatenate the paths ${t \in [0,t_0] \mapsto u+tv}$, ${t \in [0,t_i] \mapsto \Phi^t(u\pm t_0 v)}$ and this segment to obtain a continuous path $h_\e:[0,1] \to H^1(M)$ with $h_\e(0)=u_\e$ and $E_\e(h_\e(t))<E_\e(u_\e)$ for $t>0$. A similar construction shows that we can define $h_\e:[-1,0] \to H^1(M)$ so that $h_\e(-1)=-1$, $h_\e(0)=u_\e$ and $E_\e(h_\e(t))<E_\e(t)$ for $t<0$. This completes the construction of the claimed sweepout. 

We will now show that $E_\e(u_\e)=c_\e$ and that $u_\e$ has Morse index $m(u_\e)=1$. Clearly, $c_\e \leq E_\e(u_\e)$. It follows from \cite{Guaraco} that $c_\e>0$ and that there exists $v_\e \in K_{c_\e}$, which $E_\e(u_\e) \leq E_\e(v_\e) = c_\e$, since $u_\e$ is the solutions of least positive energy. Therefore $c_\e = E_\e(u_\e)$ and $u_\e$ is a min-max solution. To see that $u_\e$ has Morse index $m(u_\e)=1$, we proceed as in \cite[Prop. 3.1]{MarquesNevesDuke}. By definition, $u_\e$ has Morse index $m(u_\e)\geq 1$. If this inequality is strict, then we would be able to find linearly independent eigenfunctions $v=v_1,v_2 \in H^1(M)$ associated to negative eigenvalues of $d^2E_\e(u_\e)$ such that
	\[E_\e''(u_\e)(v_1,v_1)<0, \quad E_\e''(u_\e)(v_2,v_2)< 0, \quad \mbox{and} \quad E_\e''(u_\e)(v_1,v_2) = 0.\]
Choose also a function $\rho \in C^\infty(\R)$ such that $\rho(t)=1$ for $|t|\leq 1/3$, and $\rho(t)=0$, for $|t|\leq 1/2$. Define $\gamma_\e:\R \times [-1,1] \to H^1(M)$ by
	\[\gamma_\e(s,t):=h_\e(t) + s\rho(t)v_3, \quad (s,t) \in \R \times [-1,1].\]
We have $\gamma_\e(s, \cdot) \in \Gamma$, $\gamma_\e(0,0)=h_\e(0)=u_\e$ and:
	\[\frac{\partial}{\partial t} \gamma_\e(0,0) = h_\e'(0) = v_1, \quad \frac{\partial}{\partial s} \gamma_\e(0,0)=v_3.\]
Hence,
	\[\frac{\partial^2}{\partial t^2}(E_\e \circ \gamma_\e)(0,0) <0, \quad \frac{\partial^2}{\partial s \partial t}(E_\e \circ \gamma_\e)(0,0) =0, \quad \frac{\partial^2}{\partial s^2}(E_\e \circ \gamma_\e)(0,0) <0.\]
Since $E_\e \circ h_\e$ has an unique maximum point in $t=0$, we can find $\delta_1>0$ such that
	\[E_\e(\gamma_\e(\delta_1,t)) < E_\e(\gamma_\e(0,0)) = E_\e(u_\e)\]
for all $t \in [-1,1]$. But $\gamma_\e(\delta_1, \cdot) \in \Gamma$ and $E_\e(u_e)=c_\e$, which leads to a contradiction. Therefore, $u_\e$ has index 1. The last claim follows from \cite[Theorem 3.7]{Guaraco}.

\end{proof}

\begin{proof}[Proof of Theorem \ref{least_energy} - Item (3)]
We will use the Monotonicity formula from Lemma \ref{technical1} (4), to show that $\liminf E_\e(u_\e)>0$. 

Choose $0<\tilde\rho<\min\{ {\rm Inj}(M),\rho_1\},$ where $\rho_1$ is as in Lemma \ref{technical1}, and such that $\omega_n \rho^n/2 \leq {\rm Vol}(B_\rho(q)) \leq 2 \omega_n \rho^n$ for every $q\in M$ and every $\rho \in (0,\tilde\rho)$. This is possible given that $M$ is compact. Fix $\e>0$ so that $\e<\min\{\e_1,\e_2,\tilde\rho\}$, with $\e_1$ and $\e_2$ as in Lemma \ref{technical1}.

Choose $p\in M$ such that $u_\e(p)=\gamma$, by Lemma \ref{technical1} (1). By Lemma \ref{technical1} (4), we have
\begin{dmath*}e^{m\tilde\rho} (\tilde\rho)^{-n+1} \int_{B_{\tilde\rho}(p)}e_\e(u) \geq e^{m\e/2} (\e/2)^{-n+1} \int_{B_{\e/2}(p)}e_\e(u) + \int_{\e/2}^{\tilde\rho} e^{m\rho} \rho^{-n+1}\int_{B_\rho(p)}\left( -\xi_\e \right)(u)
\end{dmath*}

Notice that, by the choice of $\e$, and by items (3) and (5) of Lemma \ref{technical1}, we have $-\xi_\e(u) \geq -c_1$ everywhere on $M$ and $e_\e(u)\geq W(u)/\e \geq c_2/\e$ on $B_{\e/2}(p)$. Then, complementing with the inequality above it follows

\begin{dmath*}
e^{m\tilde\rho} (\tilde\rho)^{-n+1} \int_{B_{\tilde\rho}(p)}e_\e(u) \geq \frac{e^{m\e/2} c_2 \omega_n}{4} - 2c_1 \tilde\rho \omega_n e^{m\tilde\rho}
\end{dmath*}
 
We can choose $\tilde\rho>0$ small enough (and independently of $\e$) so that $e^{m \tilde\rho} \leq 2$, then 
 
$$E_\e(u_\e) \geq \int_{B_{\tilde\rho}(p)}e_\e(u) \geq \bigg(\frac{c_2}{8} - 2c_1 \tilde\rho\bigg) \omega_n \tilde\rho^{n-1}.$$

To finish the argument, we can choose $\tilde \rho$ small enough if necessary, and independently of $\e>0$, in such a way that $\frac{c_2}{16} - 2c_1 \tilde\rho$ is positive. In particular, there is a positive constant $C>0$ such that $E_\e(u_\e)\geq C$ for every $\e>0$ small enough.

The rest of the conclusions follow from the fact that the index of $u_\e$ is bounded by 1 (by Item (2)) and  Theorem A from \cite{Guaraco}.
\end{proof}

\begin{proof}[Proof of Theorem \ref{least_energy} - Item (4)]  Let $\tilde \Sigma \subset M$ be a minimal hypersurface of least area. It follows from Propositions 10 and 13 in \cite{Rosenberg}, combined with the recent catenoid estimate \cite{KetoverMarquesNeves} to rule out the case unoriented surfaces, that $\tilde \Sigma$ is separating and orientable. Theorem 1.1 in \cite{Pacard} allows one to obtain $\e_0>0$ such that, for every $\e \in (0,\e_0)$, there exists $v_\e \in K$ such that $E_\e(v_\e) \to \frac{1}{2\sigma}|\tilde \Sigma|$ as $\e\to 0$. Since $E_\e(v_\e) \geq E_\e(u_\e)$, we have $|\tilde \Sigma| \geq |\Sigma|$. Using that $\tilde \Sigma$ has least area, we conclude that $|\Sigma|=|\tilde \Sigma|$. \ \
\end{proof}

The following proposition shows that, provided $\e>0$ is big enough, the only solutions to the Allen-Cahn equation are constants. Another way of saying this is that if a domain is not big enough in terms of the potential, then there are not interesting solutions, an idea also found in \cite{BrezisOswald}. This result has been previously stated in \cite{GSmith} for a more general class of potentials. However, our proof still works in those cases, and is fairly different as we use only geometric inequalities. In addition, it provides an estimate on how big $\e$ must be in terms of the Cheeger constant of the manifold. 

\begin{proposition}\label{nosolutions}
The constants $\pm1$ and $\gamma$ are the only solutions of equation (\ref{eq:eac}), whenever $\e>\e_0=2 \sqrt{C}/h(M)$. Here $h(M)$ is the Cheeger constant of $M$ and $C=\max_{s\in[-1,1]}W'(s)/(s-\gamma)$ is a positive constant depending only on $W$.
\end{proposition}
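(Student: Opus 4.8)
The plan is to argue by contradiction. Suppose $u$ is a solution of \eqref{eq:eac} other than the constants $\pm 1$ and $\gamma$; by elliptic regularity $u$ is smooth, and by Lemma \ref{technical1}(1) we have $|u|<1$ on $M$ and the function $u-\gamma$ changes sign. Hence the open sets $\{u>\gamma\}$ and $\{u<\gamma\}$ are both nonempty, and since their volumes add up to at most $\operatorname{Vol}(M)$, at least one of them, say $\Omega$, satisfies $\operatorname{Vol}(\Omega)\le\tfrac12\operatorname{Vol}(M)$. I will run the argument in the case $\Omega=\{u>\gamma\}$ with test function $\varphi:=(u-\gamma)^+\in H^1(M)$; the case $\Omega=\{u<\gamma\}$ is identical with $\varphi:=(\gamma-u)^+$. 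Note $\varphi\ge 0$ is Lipschitz, $\varphi\not\equiv 0$, and each superlevel set $\{\varphi>t\}=\{u>\gamma+t\}$ ($t\ge 0$) is contained in $\Omega$, hence has volume at most $\tfrac12\operatorname{Vol}(M)$.

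\emph{Step 1 (energy identity).} Testing the weak form of \eqref{eq:eac} against $\varphi$, and using that $\varphi$ agrees with $u-\gamma$ on $\{u>\gamma\}$ and vanishes on $\{u\le\gamma\}$ together with the fact that $M$ is closed, one gets
\[ \e^2\int_M|\nabla\varphi|^2 \;=\; -\int_{\{u>\gamma\}}W'(u)\,(u-\gamma) \;\le\; C\int_M\varphi^2 . \]
The inequality uses the a priori bound $|u|<1$ and the elementary fact that $W'(s)(s-\gamma)\ge -C(s-\gamma)^2$ for every $s\in[-1,1]$, which holds with $C$ the constant of the statement: by Hypothesis A the derivative $W'$ points away from $\gamma$ on $(-1,1)$, so $W'(s)(s-\gamma)\le 0$ there and $|W'(s)/(s-\gamma)|$ extends continuously to $[-1,1]$.

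\emph{Step 2 (Cheeger-type lower bound).} Next I bound $\int_M|\nabla\varphi|^2$ from below by $\int_M\varphi^2$ using the geometry of $M$; this is where $h(M)$ enters. By the coarea formula,
\[ \int_M|\nabla(\varphi^2)| \;=\; \int_0^\infty\mathcal H^{n-1}\!\big(\{\varphi^2=t\}\big)\,dt \;\ge\; h(M)\int_0^\infty\operatorname{Vol}\big(\{\varphi^2>t\}\big)\,dt \;=\; h(M)\int_M\varphi^2 , \]
the middle step being the definition of the Cheeger constant: for a.e.\ $t>0$ the set $\{\varphi^2>t\}\subseteq\Omega$ has volume at most $\tfrac12\operatorname{Vol}(M)$, so it is an admissible competitor and $\mathcal H^{n-1}(\partial\{\varphi^2>t\})\ge h(M)\operatorname{Vol}(\{\varphi^2>t\})$. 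On the other hand $\int_M|\nabla(\varphi^2)|=2\int_M\varphi\,|\nabla\varphi|\le 2\big(\int_M\varphi^2\big)^{1/2}\big(\int_M|\nabla\varphi|^2\big)^{1/2}$ by Cauchy--Schwarz. Since $\int_M\varphi^2>0$, these combine to $\int_M|\nabla\varphi|^2\ge\tfrac{h(M)^2}{4}\int_M\varphi^2$. Feeding this into Step 1 gives $\tfrac{h(M)^2}{4}\le C/\e^2$, i.e.\ $\e\le 2\sqrt C/h(M)=\e_0$, contradicting $\e>\e_0$. Therefore the only solutions of \eqref{eq:eac} are $\pm 1$ and $\gamma$.

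The argument is short, so there is no serious obstacle; the points needing care are rather: (a) verifying $W'(s)(s-\gamma)\ge -C(s-\gamma)^2$ on $[-1,1]$, which relies on the qualitative shape of the double well and on the a priori bound $|u|<1$; and (b) the routine measure-theoretic bookkeeping in the coarea step for the Lipschitz function $\varphi^2$ (superlevel sets of finite perimeter), and in particular the volume bound $\operatorname{Vol}(\{\varphi^2>t\})\le\tfrac12\operatorname{Vol}(M)$, which is what makes the \emph{manifold's} Cheeger constant $h(M)$ --- rather than a relative or Dirichlet version --- the right quantity. The one conceptual idea, and the source of the explicit threshold, is that the sign-change of $u-\gamma$ lets us test \eqref{eq:eac} against $(u-\gamma)^\pm$ and thereby reduce the statement to a one-sided Poincar\'e inequality, to which Cheeger's estimate applies directly. (Equivalently: writing $W'(u)=q\,(u-\gamma)$ with $\|q\|_\infty\le C$, the function $u-\gamma$ is a sign-changing solution of the linear equation $-\e^2\Delta w+qw=0$, so $0\ge\lambda_2(-\e^2\Delta+q)\ge\e^2\lambda_1(-\Delta_M)-C$, and $\lambda_1(-\Delta_M)\ge h(M)^2/4$ yields the same bound.)
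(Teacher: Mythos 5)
Your proof is correct and follows essentially the same route as the paper's: test the equation against $(u-\gamma)^{\pm}$ on the nodal component of volume at most $\tfrac12\operatorname{Vol}(M)$, bound $-W'(s)/(s-\gamma)$ by the constant $C$, and close with Cheeger's inequality. The only real difference is in the execution of the Cheeger step: the paper works with $u-\gamma$ as an element of $H^1_0(\Omega)$, bounds the \emph{fundamental tone} $\lambda^*(\Omega)$ by $C/\e^2$, and then invokes Cheeger's inequality $h(\Omega)^2/4\le\lambda^*(\Omega)$ together with $h(M)\le h(\Omega)$ --- which forces it to deal, by approximation, with the fact that $\partial\Omega$ need not be smooth. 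You instead keep the test function $\varphi=(u-\gamma)^+$ global on the closed manifold and inline the standard coarea-formula proof of Cheeger's inequality applied to $\varphi^2$, using that the superlevel sets sit inside $\Omega$ and hence are admissible competitors for $h(M)$; this yields the identical threshold $\e_0=2\sqrt{C}/h(M)$ while sidestepping the boundary-regularity approximation entirely. (One cosmetic remark: as in the paper's own proof, the positive constant is $C=\max_{s\in[-1,1]}\bigl(-W'(s)/(s-\gamma)\bigr)$; the sign in the statement of the proposition is a typo, and your inequality $W'(s)(s-\gamma)\ge -C(s-\gamma)^2$ uses the correct one.)
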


\begin{proof}
Let $u$ be a non-constant solution of $-\e^2\Delta u + W'(u)=0$. Without loss of generality, we can assume that $\Omega=\{u>\gamma\}$ is such that ${\rm Vol}(\Omega)\leq {\rm Vol(M)}/2$, otherwise we would simply choose $\{u<\gamma\}$ and the arguments would follow in the same way. Notice that both sets are non-empty by Lemma \ref{technical1} - Item (1).

The argument is by contradiction. Our goal is to prove that if $\e>0$ is big enough, $\Omega$ cannot support $u-\gamma \in H^1_0(\Omega)$ satisfying $-\e^2\Delta u + W'(u)=0$ and $u-\gamma>0$ on the interior, as it does.

The set $\Omega$ does not necessarily have a nice boundary, so we are not able to define the first eigenvalue of the Dirichlet problem. However, we can still define the following quantity, known as its \textit{fundamental tone} (see \cite{Chavel}) $$\lambda^*(\Omega) = \inf_{v\in H_0^{1}(\Omega)} \frac{\int_\Omega |\nabla v|^2}{\int_\Omega v^2}.$$ Motivated by some computations in \cite{BrezisOswald}, take $u-\gamma$ as a test function and substitute the equation after integrating by parts (by approximation, since the domain is not necessarily  smooth, as in \cite{Chavel} pag. 21-22). We obtain $$\lambda^*(\Omega) \leq \frac{\int_\Omega |\nabla (u-\gamma)|^2}{\int_\Omega (u-\gamma)^2}= -\frac{1}{\e^2}\frac{\int_\Omega (u-\gamma)W'(u)}{\int_\Omega (u-\gamma)^2}\leq C/\e^2,$$ where $C=C(W)=\max_{s\in[-1,1]}-W'(s)/(s-\gamma)$, which is positive and finite since $W'(\gamma)=0$.

On the other hand, Cheeger's inequality gives a lower bound on $\lambda^*(\Omega)$ in terms of the Cheeger constant of $\Omega$ which is defined as $$h(\Omega)= \inf {\rm Area}(\partial A)/{\rm Vol}(A),$$ where the infimum its taken over all open sets $A\subset\subset \Omega$ with smooth boundary. 

Cheeger's inequality asserts $$\frac{h(\Omega)^2}{4}\leq \lambda^*(\Omega).$$ In fact, the proof of this inequality for smooth boundaries (as found, for example, in \cite{Chavel}, Theorem 3, pag. 95) also applies to our case. Moreover, the Cheeger's constant of $M$ is defined as $$h(M)=\inf \frac{{\rm Area}(\partial A)}{ {\rm Vol}(A)},$$ where this time the infimum ranges over all open sets $A\subset M$ with smooth boundary and such that ${\rm Vol}(A)\leq {\rm Vol}(M)/2$. This bound on the volume of $A$ is necessary whenever $M$ is a closed manifold, otherwise the constant would be trivially zero. 

Finally, $$0< \frac{h(M)^2}{4} \leq \frac{h(\Omega)^2}{4}\leq \lambda^*(\Omega) \leq \frac{C}{\e^2},$$ which yields a contradiction provided $\e>\e_0=2 \sqrt{C}/h(M)$.

\end{proof}

\subsection{Proof of the technical lemma}

\begin{proof}[Proof of Lemma \ref{technical1}]

To see (1) first notice that, by the form of the potential (i.e, Hypothesis A) if there is a point $p$ of maximum of $u$, with $u(p)>1$, then $W'(u)>0$ in a small neighborhood $U$ around $p$. In particular, $-\Delta u > 0$ on $U$. Taking a non-negative test function $\varphi$, with non-empty support on $U$ we conclude $$E_\e'(u)(\varphi) = \int_U \varphi (-\e \Delta u + W'(u)/\e) > 0,$$ which contradicts that $u$ is a critical point of $E_\e$. Thus $|u|\leq 1$. However, if $\max u =1$ then $u$ would be identically 1 by the maximum principle.

For the second part of (1), observe that if $u-\gamma$ is non-negative and not identically zero, then $$E'_\e(u)(1)=\frac{1}{\e} \int_M W'(u)<0$$ since $W'(u)<0$ whenever $\gamma<u<1,$ contradicting the fact that  $u$ is a solution. In the same way, $u-\gamma$ cannot be non-positive.

For the proof of (2) and (4) see \cite{Guaraco}. The proof of (3) is presented in \cite{HutchinsonTonegawa} for $M=\R^n$. The proof is local, and the same arguments work for general metrics.

To see (4), fix $p\in M$ and $\e>0$. Choose any $0<\delta<{\rm Inj}(M)$ and consider the function on $B_1(0) \subset T_p M$ given by $\tilde u(x) = u\ \circ \ \exp_p(\delta \cdot x)$. In these dilated normal coordinates the equation $-\Delta u+ W'(u)/\e^2=0$ has the form $$- a_{ij} \cdot \partial_{ij} \tilde u - \delta \cdot  b_i \cdot  \partial_i \tilde u + (\delta/\e)^2 \cdot  W' (\tilde u)=0,$$ where, given that $M$ is compact, the coefficients are such that there exists $C=C(M)$ satisfying $\|a_{ij}-\delta_{ij}\|_{C^1(B_1(0))} + \|\delta \cdot b_k\|_{C^1(B_1(0))}<C \delta^2$ for every $i,j,k$ and $\delta<\tilde\delta$, for some $\tilde \delta < {\rm Inj}(M)$ small enough and independent of $p$. Since we already know that $|u|\leq 1$, choosing $\delta<\tilde\delta$ so that $\delta/\e\leq1$, standard gradient estimates (see Proposition 2.19 of \cite{HanLin}) guarantee the existence of a positive constant $\tilde C=\tilde C(\tilde \delta,W)$ such that $\sup_{B_{1/2}(0)}|\nabla \tilde u | \leq \tilde C$. Whenever $\e<\tilde\delta$ we can simply choose $\delta =\e$. 

Assume now that $\tilde u(0)=u(p)=\gamma$. By the observations made above, for every $\xi>0$ there exists $\rho=\rho(\xi, \tilde \delta, W)\in(0,1/2)$ such that $|\tilde u -\gamma|<\xi$ in the ball $B_{\rho \delta}(0)\subset T_p M$. In particular, we can choose $\rho=\rho(\tilde \delta, W)>0$ so that $W(u)>\frac{1}{2}\max_{[-1,1]} W$ in $B_{\rho \delta}(p)\subset M$, independently of $p$.

\end{proof}

\section{Multiparameter min-max for the energy functional}\label{sec:min-max}

In this section, we employ min-max methods to find solutions for the equation \eqref{eq:eac} using higher dimensional families. 

\subsection*{Hypothesis on the potential $W$.} Besides from Hypothesis A from the last section, from now on we will assume that $W$ is an even function. In particular, $\gamma=0$.

\

Since $E_\e$ is an even functional, we can use families of symmetric, compact and topologically non-trivial sets in $H^1(M)$ to detect critical points of $E_\e$. These families can be seen as the analogue of the Gromov-Guth high-parameter families \cite{Guth, MarquesNevesInfinite} in the context of phase transitions. The non-triviality of these sets will be expressed in terms of a \emph{topological $\Z/2$-index}, in the sense of \cite{Ghoussoub-A} (see also \cite{Ghoussoub}). 

\subsection{A topological $\Z/2$-index} \label{z2index} By a \emph{$\Z/2$-space} we will mean a paracompact Hausdorff space $X$ with a given homeomorphism $T:X \to X$ such that $T^2 = \mathrm{id}_X$. We will say that a $\Z/2$-space is \emph{free} if $T$ has no fixed points.

\begin{definition} \label{def:indice}
Let $\mathcal{C}$ be a class of paracompact $\Z/2$-spaces and assume that $(A,T|_A) \in \mathcal{C}$ whenever $A$ is an invariant paracompact subset of $X$ and $(X,T) \in \mathcal{C}$. Assume also that $\mathcal{C}$ contains $S^\infty$. A function $\ind:\mathcal{C} \to \N \cup \{0,+\infty\}$ is called a \emph{topological $\Z/2$-index} if it satisfies the following properties:

\begin{enumerate}
	\item[(I1)](Normalization) $\ind(A) =0$ if, and only if, $A=\emptyset$.
	\item[(I2)](Monotonicity) If $A_1,A_2 \in \mathcal{C}$ and there exists an equivariant continuous map $A_1 \to A_2$, then $\ind(A_1) \leq \ind(A_2)$.
	\item[(I3)](Continuity) If $X \in \mathcal{C}$ and $A \subset X$ is an invariant closed subset of $X$, there exists an invariant neighborhood $V \subset X$ of $A$ such that $\ind(A) = \ind(\overline V)$.
	\item[(I4)](Subaditivity) If $X \in \mathcal{C}$ and $A_1,A_2 \subset X$ are invariant closed subsets, then $\ind(A_1 \cup A_2) \leq \ind(A_1) + \ind(A_2)$.
	\item[(I5)] For every compact free $\Z/2$-space $X \in \mathcal{C}$, if $\ind(X)\geq n$, then the orbit space $\tilde X$ has at least $n$ elements.
	\item[(I6)] It holds $\ind(X)<+\infty$ for all compact free $\Z/2$-space $X \in \mathcal{C}$.
\end{enumerate}
\end{definition}

In order to obtain solutions to \eqref{eq:eac} with bounded Morse index, we have chosen to work with cohomological families of subsets of $H^1(M)$ which can be described in terms of the cohomological index of E. Fadell and P. Rabinowitz \cite{Fadell-Rabinowitz1,Fadell-Rabinowitz2}.  

Given a paracompact free $\Z/2$-space $(X,T)$, one can see that there exists a continuous map $f:X \to S^\infty$ which is equivariant, that is $f(Tx) = -f(x)$ for all $x \in X$. Here, $S^\infty=\bigcup_nS^n$ is the infinite dimensional sphere, with the topology given by the direct limit of $\{S^n\}_{n \in \N}$ ordered by the inclusions $S^n \hookrightarrow S^m$, for $n\leq m$. Denote by $\tilde f: \tilde X \to \R\p^\infty$ the induced continuous map, where $\tilde X$ and $\R\p^\infty$ are the orbit spaces $X/\{x \sim Tx\}$ and $S^\infty/\{x \sim -x\}$, respectively. The Alexander-Spanier cohomology ring of the infinite dimensional projective space $\R\p^\infty$ with $\Z/2$-coefficients is isomorphic to $\Z/2[w]$, with a generator $w \in H^1(\R\p^\infty;\Z/2)$ (see \cite[p. 264]{Spanier}). The map $f$ is also unique modulo equivariant homotopy, so we define the \emph{cohomological index} of $(X,T)$ by
		\[\ind_{\Z/2}(X,T) = \sup\left\{ k : \tilde f^*(w^{k-1}) \neq 0 \in H^{k-1}(\tilde X;\Z/2)\right\}. \]
We set $w^0 = 1 \in H^0(\R\p^\infty;\Z/2)$ and adopt the convention $\ind_{\Z/2}(\emptyset,T) = 0$, so that $\ind_{\Z/2}(X,T)\geq 1$ iff $X$ is non-empty. If $(X,T)$ is a $\Z/2$-space which is not free, we set $\ind_{\Z/2}(X,T)=\infty$. We will write $\ind_{\Z/2}(X)=\ind_{\Z/2}(X,T)$ whenever the action of $\Z/2$ is clear from the context. For subsets of Banach spaces, we will assume this action is the antipodal map $x \mapsto -x$, unless otherwise stated.


In the Appendix \ref{ap:index}, we give some details about the construction of the cohomological index $\ind_{\Z/2}$, which defines a topological $\Z/2$-index in the class of all paracompact $\Z/2$ spaces.


\subsection{Setting for the multiparameter min-max} In this subsection we briefly describe the general min-max procedure we will apply to the Energy functional. 

Let $X$ be a $C^2$ Hilbert manifold which is also a free $\Z/2$-space. For each $p\in \N$, consider the family
	\begin{equation} \label{def:fp}
		\mathcal{F}_p = \left\{ A \subset H^1(M) : A \ \mbox{compact, symmetric}, \ \ind_{\Z/2}(A) \geq p+1\right\}.
	\end{equation} One easily verifies that ${\mathcal{F}}_p$ is a $p$-dimensional $\Z/2$-cohomological family, in the sense defined in \cite[\S 3]{Ghoussoub-A}.
	
Given an equivariant functional $\varphi:X \to \R$, we define the min-max values
	\[c_p=c(\varphi, \mathcal{F}_p) := \inf_{A \in {\mathcal{F}}_p} \sup_{x \in A}\varphi(x), \quad \mbox{for} \quad p \in \N.\] Since $\mathcal{F}_{p+1} \subset \mathcal{F}_p$, we have $c_p\leq c_{p+1}$ for all $p \in \N$. A sequence $\{A_n\} \subset {\mathcal{F}}_p$ is called a \emph{minimizing sequence} if \[\sup_{x \in A_n} \varphi(x) \to c_p \quad \mbox{as} \quad n \to \infty.\] Given such a sequence, we say that $\{x_n\} \subset X$ is a \emph{min-max sequence} for $\{A_n\}$ if \[d(x_n, A_n) \to 0 \quad \mbox{and} \quad \varphi(x_n) \to c_p, \quad \mbox{as} \quad n\to \infty.\]
	
In order to deal with the lack of compactness of the domain, we restrict ourselves with a certain class of functionals: we say that $\varphi$ satisfies the \emph{Palais-Smale condition along} $\{A_n\}$ if every min-max sequence $\{x_n\}$ for $\{A_n\}$ with $\varphi'(x_n) \to 0$ contains a convergent subsequence. This is the key condition that allow us to find a critical point of $\varphi$ at each min-max level $c_p$.  

For each $c \in \R$, we denote by $K_{c}$ the set of all $x \in X$ such that $\varphi'(x)=0$ and $\varphi(x)= c$. If $\varphi'(x)=0$, we write $m(x)$ and $m^*(x)$ for the Morse index and the augmented Morse index of $x$, that is, $m(x)$ and $m^*(x)$ are the maximal dimensions of subspaces of $T_xX$ such that $\varphi''(x)$ is negative definite and negative semidefinite, respectively. Given $c\in \R$ and $\ell \in \N$, we denote by $K_c(\ell)$ the set of critical points of $x \in K_c$ of $\varphi$ such that $m(x)\leq \ell \leq m^*(x)$. 

\subsection{A min-max theorem} 

In the setting above, whenever $X$ is a complete $\Z/2$-free space, the results of \cite{Ghoussoub-A} imply the existence of a critical point ${x \in K_{c_p}(p)}$, for every value of $p$. Unfortunately, the space in which we would like to apply the min-max construction is not complete. 

More precisely, let $(M^n,g)$ be a closed Riemannian manifold. Define $X=H^1(M)\backslash \{0\}$ and let $\mathcal{F}_p$ be the cohomological family defined as above. Clearly, $X$ is a $\Z/2$-free space which is not complete. However, for each $\e>0$, we still have min-max values $c_\e(p)$ defined as above, with $E_\e$ in the place of $\varphi$: \[c_\e(p)=c(E_\e, \mathcal{F}_p) := \inf_{A \in {\mathcal{F}}_p} \sup_{x \in A}E_\e(x), \quad \mbox{for} \quad p \in \N.\]  We also have that $E_\e:X \to \R$ satisfies the Palais-Smale condition along every minimizing sequence which is bounded away from 0 (see \cite[Proposition 4.4]{Guaraco}). 

In such a situation we still can apply the results from \cite{Ghoussoub-A} to minimizing sequences that are bounded away from zero, e.g. when ${c_\e(p) < E_\e(0)}= {\rm Vol}(M) W(0)/\e$, that goes to infinity as $\e \to 0$. The following theorem states that the limit behavior of the min-max values $c_\e (p)$, as $\e \to 0$, is bounded from above and from below by functions with sublinear growth on $p$. In particular, for any fixed $p$, $c_\e(p)<E_\e(0)$ will hold provided $\e$ is small enough. 

\begin{theorem} \label{thm:sublinear}
Let $M^n$ be a compact Riemannian manifold. There exists a constant $C(M)>1$ such that the min-max values $c_\e(p)$ satisfy $$C(M)^{-1}p^{\frac{1}{n}} \leq \liminf_{\e \to 0^+} c_\e(p) \leq \limsup_{\e \to 0^+} c_\e(p) \leq C(M) p^{\frac{1}{n}}$$ for all $p \in \N$.
\end{theorem}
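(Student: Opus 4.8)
The plan is to prove the two sublinear bounds separately, each by a direct comparison with the area-functional picture, following the strategy of Gromov, Guth and Marques--Neves.

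\textbf{Upper bound.} First I would construct explicit competitors $A \in \mathcal{F}_p$ with controlled energy. The natural candidates come from geometric sweepouts: fix a smooth map from $\mathbb{RP}^p$ (or an equivariant map from $S^p$) into $\mathcal{Z}_{n-1}(M,\mathbb{Z}/2)$ realizing a $p$-dimensional family of hypersurfaces, and then replace each hypersurface by its one-dimensional Allen--Cahn profile. Concretely, one partitions $M$ into roughly $p$ cubes of side $\sim p^{-1/n}$ (using that $M$ is a manifold of dimension $n$), associates to each real-valued parameter the signed-distance-type function that is $\pm 1$ away from a coordinate hyperplane and interpolates with the one-dimensional heteroclinic profile $\mathbb{H}(t/\e)$ across it, and takes products/unions to build a symmetric compact set $A\subset H^1(M)\setminus\{0\}$ whose $\mathbb{Z}/2$-index is at least $p+1$. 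The energy of each such function is controlled by $\sigma$ times the total $(n-1)$-area of the interfaces used, which is $O(p^{1/n})$ by the volume/side-length bookkeeping (the energy contribution of a single flat interface across a cube of side $\ell$ is $\sim \sigma\ell^{n-1}$, and there are $\sim p$ of them, so $\sum \sigma\ell^{n-1}\sim \sigma p\cdot p^{-(n-1)/n}=\sigma p^{1/n}$); the $\e\to0$ limit removes the transition-layer error terms. Taking $C(M)$ large enough to absorb the implied constants and the discrepancy estimates from Lemma \ref{technical1}(3) gives $\limsup_{\e\to0}c_\e(p)\le C(M)p^{1/n}$. The subtlety here is verifying $\ind_{\Z/2}(A)\ge p+1$ for the constructed set; this is handled by exhibiting an explicit equivariant map $A\to S^\infty$ whose induced map detects $w^p$, using the product structure of $A$ and multiplicativity of the cohomological index.

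\textbf{Lower bound.} For the reverse inequality I would argue by contradiction: if $c_\e(p)$ were too small along a subsequence $\e\to0$, pick near-optimal $A\in\mathcal{F}_p$ with $\sup_A E_\e < (C(M))^{-1}p^{1/n}$. The key is a ``parametric'' version of the interface-area lower bound: using the relationship between $E_\e$ and the perimeter functional (specifically, that $\tfrac{1}{2\sigma}E_\e(u)$ is close to the perimeter of $\{u>0\}$ when $E_\e(u)$ is controlled, via the discrepancy bound and the coarea/Modica--Mortola inequality $e_\e(u)\ge \sqrt{2W(u)}|\nabla u|$), one produces from $A$ a family of boundaries in $\mathcal{Z}_{n-1}(M,\mathbb{Z}/2)$ of index $\ge p+1$ with masses bounded by $\sup_A E_\e/(2\sigma)$ plus a vanishing error. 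This shows $\omega_p(M)\le (1+o(1))\sup_A E_\e/(2\sigma)$, but by the Gromov--Guth sublinear lower bound for the Marques--Neves widths, $\omega_p(M)\ge c(M)p^{1/n}$, a contradiction for $C(M)$ chosen large. This is essentially Theorem 6 of the paper run in reverse, so I expect the authors to either cite Theorem \ref{thm:sublinear}'s lower half from a later section (Section \ref{sec:bounds}) or reprove it here.

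\textbf{Main obstacle.} The genuinely delicate point, and the one I would spend the most care on, is the lower bound: passing from a symmetric compact family $A\subset H^1(M)$ of index $\ge p+1$ to a sweepout of $\mathcal{Z}_{n-1}(M,\mathbb{Z}/2)$ of comparable width. One must turn functions into currents (via $u\mapsto \partial\llbracket\{u>0\}\rrbracket$ or a smoothed version), check this is continuous in the flat topology on the relevant set, verify it is equivariant enough to preserve the $\mathbb{Z}/2$-index (the antipodal map $u\mapsto -u$ should correspond to swapping $\{u>0\}$ and $\{u<0\}$, whose boundaries agree, so one actually needs to be slightly careful and perhaps work with a level $\{u>s\}$ for generic small $s$ or restrict to where $|u|$ is bounded below), and control the mass by the energy uniformly as $\e\to0$ using the bounded-discrepancy estimate. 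The upper bound, by contrast, is mostly a careful but routine construction. I would therefore organize the proof as: (i) construction of competitors and index computation for the upper bound; (ii) the function-to-current comparison map and its continuity/equivariance; (iii) the mass--energy estimate; (iv) invoking the Gromov--Guth width lower bound to close the contradiction.
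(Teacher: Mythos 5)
Your proposal is correct in outline, and the two halves fare differently against the paper. The upper bound is essentially the paper's argument (Theorem \ref{upperbound} in Section \ref{sec:upperbound}): identify $M$ with a cubical complex, take $\sim p$ cells of side $\sim p^{-1/n}$, compose one-dimensional Allen--Cahn profiles with (modified) distance functions to linear slices, and use Guth's bend-and-cancel map; the index claim is handled simply because the family is the image of an explicit odd map $S^p \to H^1(M)\setminus\{0\}$, so monotonicity of $\ind_{\Z/2}$ gives $\ind_{\Z/2}(A)\geq p+1$ with no further cohomological computation. The one technical wrinkle the paper has to address that you do not anticipate is continuity of the resulting family in $H^1(M)$ when slices cancel mod $2$, which is resolved by parametrizing slices by \emph{complex} roots of degree-$p$ polynomials and using modified distance functions.

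For the lower bound your route is genuinely different from the paper's. You propose to push $A\in\mathcal{F}_p$ into $\znm$, obtain a $p$-sweepout of comparable width, and invoke the Gromov--Guth bound $\omega_p(M)\geq c(M)p^{1/n}$; this is logically sound (it is Theorem \ref{thm:wplp} combined with the known width lower bound, and there is no circularity since Section \ref{sec:comp} only uses the \emph{upper} half of the sublinear estimate), but it imports all of the difficulty of Section \ref{sec:comp}: the function-to-cycle map is not flat-continuous directly, and the paper needs discrete interpolation (Zhou's theorem, Almgren extensions) and Almgren's isomorphism to handle both continuity and the equivariance issue you flag (the induced map on cycles is \emph{even}, not odd, and non-triviality is detected on the orbit space, not by choosing a generic level). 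The paper instead proves the lower bound (Theorem \ref{thm:lower} in Section \ref{sec:bounds}) entirely inside $H^1(M)$: place $p$ disjoint geodesic balls of radius $r_p\sim p^{-1/n}$; the Borsuk--Ulam property of the cohomological index (Lemma \ref{lem:lb1}) produces $u\in A$ with zero mean on every ball; and Lemma \ref{zeromean} — proved via De Giorgi's isoperimetric inequality (Lemma \ref{lem:dg-ipi}) and the bound $W(a)|A_0|\leq \e E_\e|_{B_r}(u)$ on the transition set — forces $E_\e|_{B_r}(u)\gtrsim r^{n-1}$, whence $E_\e(u)\gtrsim p\, r_p^{n-1}= c\,p^{1/n}$. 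This is the phase-transition analogue of Guth's disjoint-balls argument run directly on the energy functional; it is self-contained, elementary, and avoids the currents space altogether, whereas your detour ultimately reproves the same disjoint-balls estimate after a long excursion through geometric measure theory.
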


The proof of Theorem \ref{thm:sublinear} is motivated by Guth-Gromov bend-and-cancel arguments and we postpone it to the next two sections. 

Now we can state the main result of this section.

\begin{theorem}\label{thm:minmax} 
Fix $\e>0$.

\begin{enumerate}
\item For every $p \in \N$, it holds $c_\e(p) \leq E_\e(0) = \frac{{\rm Vol}(M)}{\e} W(0)$.

\item If $c_\e(p)< E_\e(0)$, then there exists a solution $u \in K_{c_\e(p)}$ with $|u|\leq 1$ and $m(u)\leq p \leq m^*(u)$. Moreover, if $c_\e(p)=c_\e(p+k)<E_\e(0)$ for some $k \in \N$, then
	\[\ind(K_{c_\e(p)}(p+k)) \geq k+1.\]
	
\item $c_\e(p) = E_\e(0)$, for $p$ large enough (depending on $\e>0$ and $M$).

\end{enumerate}

\end{theorem}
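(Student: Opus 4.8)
The plan is to verify the three assertions of Theorem~\ref{thm:minmax} by combining the abstract min-max machinery for $\Z/2$-cohomological families from \cite{Ghoussoub-A} with the specific analytic properties of $E_\e$ recorded in \cite{Guaraco}, being careful about the incompleteness of $X = H^1(M) \setminus \{0\}$. For item (1), I would exhibit, for each $p$, an admissible competitor $A \in \mathcal{F}_p$ on which $E_\e$ does not exceed $E_\e(0)$. The natural choice is a large sphere in a finite-dimensional subspace of $H^1(M)$: take $V \subset H^1(M)$ a $(p+1)$-dimensional subspace and let $A_R = \partial B_R(0) \cap V$, which is symmetric, compact, and has $\ind_{\Z/2}(A_R) = p+1$ since it is equivariantly a $p$-sphere. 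The point is that $E_\e$ is not coercive on spheres — along rays $u = R\phi$ with $\phi$ fixed, the potential term $\int_M W(R\phi)/\e$ can be controlled, and more importantly one can instead run the negative gradient flow $\Phi^t$ (Lemma~\ref{technical2}) or a deformation pushing $A_R$ toward constants: since $W$ is even with wells at $\pm 1$, flowing decreases energy, and the relevant sublevel set $\{E_\e \le E_\e(0)\}$ should already contain a symmetric set of index $\ge p+1$ because $0$ is an interior point and small symmetric spheres around $0$ have arbitrarily large index while having energy close to $E_\e(0)$. Concretely, a symmetric sphere $\partial B_\delta(0)$ in a high-dimensional subspace has $E_\e \to E_\e(0)$ as $\delta \to 0$, but one must argue energy stays $\le E_\e(0)$; here I would use that $0$ is a strict local max of $W$ in the relevant direction only if the constant function $0$ is itself unstable, so near $0$ the energy is $\le E_\e(0) + o(1)$ and a small equivariant deformation of the sphere, followed by the flow, brings it below $E_\e(0)$ — this gives $c_\e(p) \le E_\e(0)$.

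For item (2), assume $c_\e(p) < E_\e(0)$. The key structural fact is that any minimizing sequence $\{A_n\} \subset \mathcal{F}_p$ can be taken with $\sup_{A_n} E_\e$ close to $c_\e(p) < E_\e(0)$, hence uniformly bounded away from the bad set $\{0\}$: indeed if some $A_n$ came close to $0$ it would contain points with energy near $E_\e(0) > c_\e(p)$, contradicting minimality for $n$ large. Thus the Palais-Smale condition along minimizing sequences (\cite[Prop.~4.4]{Guaraco}) applies, and the general theorem of Ghoussoub for $p$-dimensional $\Z/2$-cohomological families yields a critical point $u \in K_{c_\e(p)}$ with $m(u) \le p \le m^*(u)$. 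The bound $|u| \le 1$ is then Lemma~\ref{technical1}(1) (any critical point other than $\pm 1, 0$ satisfies $|u| < 1$, and $\pm 1$ have energy $0 < c_\e(p)$). For the multiplicity statement when $c_\e(p) = c_\e(p+k) =: c < E_\e(0)$: this is the standard Lusternik–Schnirelmann-type argument in the cohomological setting. If $\ind_{\Z/2}(K_c(p+k)) \le k$, then by the continuity property (I3) there is an invariant neighborhood $U$ of $K_c(p+k)$ with the same index $\le k$; one then uses the deformation lemma (valid because PS holds near level $c$, again using that we stay away from $0$ since $c < E_\e(0)$) to push a sweepout $A \in \mathcal{F}_{p+k}$ with $\sup_A E_\e$ slightly above $c$ down into $(\{E_\e < c - \eta\}) \cup U$, and then the subadditivity (I4) and monotonicity (I2) of the index give $p+k+1 \le \ind_{\Z/2}(A) \le \ind_{\Z/2}(\{E_\e \le c-\eta\}) + \ind_{\Z/2}(\overline U) \le p + k$, a contradiction. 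Hence $\ind_{\Z/2}(K_c(p+k)) \ge k+1$.

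For item (3), I would show $c_\e(p) = E_\e(0)$ once $p$ exceeds some $p_0(\e, M)$ by a dimension-counting argument on the sublevel set $\{E_\e < E_\e(0)\}$. The claim reduces to: the open set $\Omega_\e = \{u \in H^1(M) : E_\e(u) < E_\e(0)\} \setminus \{0\}$ — or rather the set of $u$ with $E_\e(u) \le E_\e(0) - \eta$ — has finite $\Z/2$-cohomological index, say $\le p_0$; then no $A \in \mathcal{F}_p$ with $p \ge p_0$ can have $\sup_A E_\e < E_\e(0)$, forcing $c_\e(p) = E_\e(0)$. To bound $\ind_{\Z/2}$ of this sublevel set I would use that $E_\e$ satisfies PS there (bounded away from $0$), so $\{E_\e \le E_\e(0) - \eta\}$ deformation-retracts onto the set $K_{\le E_\e(0)-\eta}$ of critical points of energy $\le E_\e(0) - \eta$ together with the unstable manifolds, or more simply: the set $\{E_\e \le E_\e(0) - \eta\}$ has the homotopy type of a finite CW complex (again using PS and properness of $E_\e$ on bounded energy levels via compactness of critical sets in $H^1$ — Lemma~\ref{technical1}(2)), and finite CW complexes with free $\Z/2$-action have finite index by property (I6). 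The main obstacle, and the step I would spend the most care on, is item (3): making rigorous that the relevant sublevel set is "topologically finite" despite $X$ being infinite-dimensional and incomplete — one must carefully combine the PS condition, the a priori bound $|u| \le 1$ on critical points, compactness of $K_c$ in $H^1$, and a deformation argument that never crosses the forbidden point $0$ (which is safe precisely because $E_\e(0)$ is the energy we are staying strictly below). I expect items (1) and (2) to be essentially bookkeeping on top of \cite{Ghoussoub-A} and \cite{Guaraco}, while (3) requires the genuinely new observation that high-index families are forced up to the level $E_\e(0)$.
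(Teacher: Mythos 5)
Items (1) and (2) of your proposal are essentially the paper's argument, though (1) is over-complicated: since $c_\e(p)$ is an infimum over competitors, it suffices to fix any $A\in\mathcal{F}_p$, note that the homothety $\delta A$ is still in $\mathcal{F}_p$, and use $\sup_{\delta A}E_\e\to E_\e(0)$ as $\delta\to 0$ by continuity of $E_\e$ and compactness of $A$. There is no need to force the energy \emph{below} $E_\e(0)$ on the shrunken sphere, so your digression about the gradient flow and the stability of the constant $0$ can be dropped. Item (2) is, as in the paper, a direct application of the min-max theorem for cohomological families of \cite{Ghoussoub-A}, and your observation that minimizing sequences at a level $<E_\e(0)$ stay away from the puncture $0$ is exactly the right justification for why the incompleteness of $H^1(M)\setminus\{0\}$ is harmless there.

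The genuine gap is in item (3). You correctly reduce the claim to showing that the sublevel set $\{E_\e\le E_\e(0)-\eta\}$ has finite $\Z/2$-index, but the mechanism you propose---that this set has the homotopy type of a finite CW complex via Palais--Smale and a deformation retraction onto the critical set---does not work. The sublevel set is a non-compact subset of an infinite-dimensional space, so property (I6) does not apply to it; PS together with compactness of each $K_c$ yields deformation lemmas between nearby levels, but not a retraction onto anything finite (that would require a global Morse-theoretic structure and finiteness of the critical set, neither of which is available). Moreover this route gives no quantitative control of how large $p$ must be in terms of $\e$. The paper's actual argument is an explicit equivariant map built from the Laplace spectrum: choose $p$ so that $\lambda_{p+1}\ge 2C/\e^2$, where $C=\max_{u\in[-1,1]}u^{-2}\bigl(W(0)-W(u)\bigr)$, and let $f_1,\dots,f_p$ be the first $p$ eigenfunctions. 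For any $A\in\mathcal{F}_p$ the map $u\mapsto\bigl(\int_M uf_1,\dots,\int_M uf_p\bigr)$ must vanish at some $u\in A$, since otherwise it would induce an equivariant map $A\to S^{p-1}$, contradicting $\ind_{\Z/2}(A)\ge p+1$. After truncating to $|u|\le 1$, the variational characterization of $\lambda_{p+1}$ gives $\frac{\e}{2}\int_M|\nabla u|^2\ge\frac{C}{\e}\int_M u^2$, whence $E_\e(u)\ge\frac{1}{\e}\int_M\bigl(Cu^2+W(u)\bigr)\ge\frac{1}{\e}\int_M W(0)=E_\e(0)$. Thus $\sup_A E_\e\ge E_\e(0)$ for every $A\in\mathcal{F}_p$, which is the inequality your proposal is missing.
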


\begin{proof}

Item (1) follows from the fact that given any $A \in \mathcal{F}_p$, its image, $\delta A$, by an homothety of factor $\delta>0$, also belongs to $\mathcal{F}_p$. In addition, $\sup_{\delta A} E_\e \to E_\e(0)$ as $\delta \to 0$, which implies that $c_\e(p)=c(E_\e, \mathcal{F}_p)\leq E_\e(0)$, by the definition of $c_\e(p).$

As we mentioned before, (2) follows from the min-max theorems for cohomological families from \cite{Ghoussoub-A}. 

To prove (3), choose $p$ such that the $(p+1)$-th eigenvalue of the Laplace operator on $M$ satisfies $\lambda_{p+1}\geq 2C/\e^{2}$, where $C=\max_{u\in[-1,1]} u^{-2}(W(0)-W(u))$. Notice that a simple application of L'Hospital's rule, when $u\to 0$, yields that $C$ is a finite positive constant. 

Let $\{f_1,\ldots, f_{p}\}$ be the corresponding first $p$ eigenfunctions. From the min-max characterization of the eigenvalues, it follows that$\int_M|\nabla u|^2/\int_M |u|^2 \geq \lambda_{p+1} \geq 2C/\e^2$, whenever $u \in H^1(M)$ is such that $\int_M uf_i =0$ for $i=1,\ldots, p$. Given $A \in \mathcal{F}_p$, we can always find $u \in A$ with this property, otherwise the map
	\[u \in A \mapsto \left( \int_M u f_1,\ldots, \int_M u f_p \right) \in \R^p\]
would induce a continuous equivariant map $A \to \R^p\setminus\{0\} \to S^{p-1}$, contradicting $A \in \mathcal{F}_p$ (see Lemma \ref{lem:lb1} below).

Observe that the truncation map $\tau:H^1(M) \to H^1(M)$ given by $\tau(u) = \max\{-1,\min\{1,u\}\}$ is continuous (by Lemma \ref{truncar}) and odd, hence $\tau(A) \in \mathcal{F}_p$ and $E_\e(u)\geq E_\e(\tau(u))$, for every $u\in H^1(M)$. Then $$\sup E_\e(A) \geq \sup E_\e(\tau(A)) \geq E_\e(u) \geq \frac{1}{\e} \int_M  Cu^2 + W(u) \geq \frac{1}{\e} \int_M W(0),$$ by the definition of $C$. Minimizing over all $A \in \mathcal{F}_p$, we obtain $c_\e(p)\geq E_\e(0)$ whenever $\lambda_{p+1} \geq 2C/\e^{2}$.

\end{proof}

Combining Theorems \ref{thm:sublinear} and \ref{thm:minmax}, we have that for every $p$ there exist a sequence $\e_k \to 0$ and sequence of $u_k \in C^3(M)$ satisfiying $- \e^2 \Delta u_k + W'(u_k)=0$, with bounded Morse index and such that $E_{\e_k}(u_k) \to \liminf_{\e\to 0} c_\e(p)=c(p)$, which is a finite positive constant for every $p$ (from Theorem \ref{thm:sublinear}). Finally, applying, Theorem A from \cite{Guaraco} we conclude.

\begin{corollary}
In every $n$-dimensional closed Riemannian manifold there exists an integral varifold $V_p$ such that
\begin{enumerate}
\item [(i)] $\|V_p\|=\frac{1}{2\sigma}c(p)$;
\item [(ii)] $V$ is stationary in $M$;
\item [(iii)] $\mathcal{H}^{n-8+\gamma}(\operatorname{sing}(V))=0,$ for every $\gamma>0$;
\item [(iv)] $\operatorname{reg}(V)$ is an embedded minimal hypersurface.
\end{enumerate}

\end{corollary}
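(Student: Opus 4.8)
The plan is simply to combine Theorems \ref{thm:sublinear} and \ref{thm:minmax} with Theorem~A of \cite{Guaraco}. Fix $p \in \N$. By Theorem \ref{thm:sublinear}, the value $c(p) := \liminf_{\e \to 0^+} c_\e(p)$ is finite and strictly positive, with $C(M)^{-1}p^{1/n} \le c(p) \le C(M)p^{1/n}$. I would first fix a sequence $\e_k \to 0^+$ along which $c_{\e_k}(p) \to c(p)$; in particular the numbers $c_{\e_k}(p)$ are uniformly bounded.

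Next I would extract the min-max critical points. Since $E_{\e_k}(0) = {\rm Vol}(M)W(0)/\e_k \to +\infty$ (note $W(0)=W(\gamma)>0$, as $\gamma$ is a local maximum of the nonnegative function $W$ whose only zeros are the non-degenerate minima at $\pm 1$), while $c_{\e_k}(p) \to c(p) < \infty$, we have $c_{\e_k}(p) < E_{\e_k}(0)$ for all sufficiently large $k$. For such $k$, Theorem \ref{thm:minmax}(2) provides a solution $u_k \in K_{c_{\e_k}(p)}$ of \eqref{eq:eac} with $\e = \e_k$, satisfying $|u_k| \le 1$ and $m(u_k) \le p$; standard elliptic regularity then gives $u_k \in C^3(M)$. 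Thus $(u_k)_k$ is a sequence of solutions of $-\e_k^2 \Delta u_k + W'(u_k) = 0$ with $\sup_M|u_k| \le 1$, with energies $E_{\e_k}(u_k) = c_{\e_k}(p) \to c(p)$ uniformly bounded, and with Morse indices uniformly bounded by the fixed parameter $p$.

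The last step is to invoke Theorem~A of \cite{Guaraco} for the sequence $(u_k)_k$, whose hypotheses (bounded Morse index, bounded $\sup_M|u_k|$, bounded energy) are exactly the three bounds just recorded. Passing to a subsequence if necessary, this produces a rectifiable integral varifold $V_p$ that is stationary in $M$, has $\mathcal{H}^{n-8+\gamma}(\operatorname{sing}(V_p)) = 0$ for every $\gamma > 0$ with regular part an embedded minimal hypersurface, and --- by the mass identity in that theorem, with $2\sigma = 2\int_{-1}^{1}\sqrt{W(s)/2}\,ds$ the energy-renormalization constant --- satisfies $\|V_p\| = \frac{1}{2\sigma}\lim_k E_{\e_k}(u_k) = \frac{1}{2\sigma}c(p)$. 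This gives (i)--(iv).

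I do not expect a genuine obstacle here, since the analytic content is already packaged in Theorems \ref{thm:sublinear}, \ref{thm:minmax} and Theorem~A. The one point that needs care is the strict inequality $c_{\e_k}(p) < E_{\e_k}(0)$ used to trigger Theorem \ref{thm:minmax}(2): it relies essentially on the \emph{upper} sublinear bound of Theorem \ref{thm:sublinear} (finiteness of $c(p)$), and this is precisely why the statement concerns $\liminf_{\e\to 0} c_\e(p)$ rather than a fixed $\e$, for which $c_\e(p)$ may well coincide with $E_\e(0)$. One should also check that the Morse-index bound supplied by Theorem \ref{thm:minmax}(2) is uniform in $k$ --- it is, being equal to the fixed $p$ --- so that Theorem~A applies to the entire sequence.
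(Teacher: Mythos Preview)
Your proposal is correct and follows essentially the same route as the paper: the paragraph immediately preceding the corollary combines Theorems \ref{thm:sublinear} and \ref{thm:minmax} to produce, for each $p$, a sequence $u_k$ of solutions with $\e_k\to 0$, bounded Morse index, and $E_{\e_k}(u_k)\to c(p)$, and then invokes Theorem~A from \cite{Guaraco}. Your write-up in fact spells out more carefully than the paper why $c_{\e_k}(p)<E_{\e_k}(0)$ eventually holds and why the index bound is uniform in $k$.
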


We conclude with some remarks concerning the results from this section.

\begin{remarq} If $W(u)$ is the canonical potential $(u^2-1)^2/4$, we can show that \emph{$0 \in H^1(M)$ is the only critical point of $E_\e$ with energy $\geq E_\e(0)$}. In fact, given $u \in H^1(M)$ such that $E_\e'(u)=0$, we have $|u|< 1$ and thus
	\begin{align*}
		E_\e(u) &= \frac{\e}{2} \int_M|\nabla u|^2 + \frac{1}{\e} \int_M W(u) = -\frac{1}{2\e}\int_M W'(u)u +  \frac{1}{\e} \int_M \frac{(1-u^2)^2}{4}\\
		& = \frac{1}{\e} \int_M \left( \frac{1-2u^2+u^4}{4}- \frac{u^4-u^2}{2}  \right) = \frac{1}{4\e}\left( \mathrm{Vol}(M) - \int_M u^4 \right).
	\end{align*}
Hence, $E_\e(u) \geq E_\e(0)$ implies $u=0$. 
\end{remarq} 

\begin{remarq} We can compare the one-parameter min-max solutions given by \cite[Prop. 4.4]{Guaraco} with the one given by Theorem \ref{thm:minmax} with $p=1$, in the following way. Let
	\[\Gamma = \{ h \in C([-1,1],H^1(M)) : h(\pm 1) = \pm 1\}.\]
Given $h \in \Gamma$, we define $f_h:S^1 \to H^1(M)$ by
	\[f_h(x) = \left\{ \begin{array}{rcl} h(x_1), & \mbox{if} & x_2 \geq 0, \\ -h(-x_1), & \mbox{if} & x_2 \leq 0 \end{array} \right.,\]
for $x=(x_1,x_2) \in S^1$. Since $h(\pm 1)=\pm 1$, we see that $f_h$ is well defined and continuous. Moreover, $f_h$ is equivariant and by the monotonicity of the cohomological index it follows that
	\[A_h:=f_h(S^1)=h([-1,1]) \cup (-h([-1,1])) \in \mathcal{F}_1.\]
Hence
	\[c_\e(1) \leq \inf_{h \in \Gamma} \max_{u \in A_h} E_\e(u) = \inf_{h \in \Gamma} \max_{t \in [-1,1]} E_\e(h(t)) = c_\e\]


If the critical point of $E_\e$ with least positive energy is not stable -- e.g., if $M$ has positive Ricci curvature (see \cite{Farina}) -- then by Theorem \ref{least_energy} we get
	\[c_\e = E_\e(u_\e) \leq c_\e(1) \leq c_\e.\]
Hence, in this case, the min-max values obtained using $1$-sweepouts \cite{Guaraco} and invariant families with cohomological index $\geq 2$ coincide. 
\end{remarq}


\begin{remarq}
Some of the conclusions of the min-max Theorem \ref{thm:minmax}, as well as the upper and lower bounds for the min-max values of Sections \ref{sec:upperbound} and \ref{sec:bounds}, hold for other well known families of compact symmetric subsets of $H^1(M)$. Consider, for example, the family $\mathcal{M}_p$ given by all the images of continuous odd maps $S^p \to H^1(M)\setminus \{0\}$. A similar family was used by A. Bahri and H. Berestycki in \cite{BahriBerestycki} -- attributed to Krasnoselskii \cite{Krasnoselskii} -- to prove the existence of infinitely many solutions for a class of semilinear equations. One easily verifies that $\mathcal{M}_p$ is a \emph{homotopic} family in the sense of \cite{Ghoussoub}. Then, it is possible to obtain a corresponding min-max theorem for $\mathcal{M}_p$ that proves the existence of solutions to \eqref{eq:eac} with Morse index bounded above by $p$ and energy $c(E_\e,\mathcal{M}_p)=\inf_{A \in \mathcal{M}_p} \sup E_\e(A)$.

Another option is to replace the cohomological index $\ind_{\Z/2}$ in \eqref{def:fp} with other topological $\Z/2$-indexes. There is a natural choice which gives the largest possible families defined in these terms: it consists of all symmetric compact sets $A \subset H^1(M)$ for which there exists a continuous odd map $A \to S^{k-1}$ for $k\geq p$, but not for $k=p-1$. This defines a \emph{cohomotopic} family $\mathcal{C}_p$, and its sets are characterized by having a orbit space with Lusternik-Schnirelmann category $\geq p+1$ (see Appendix \ref{ap:index}). This family was employed by A. Bahri and P. Lions in \cite{BahriLions} to improve the results of \cite{BahriBerestycki}. The corresponding min-max theorem gives the existence of critical points for $E_\e$ with augmented Morse index $\geq p$, and also a lower bound on the size of the set of such solutions in terms of this topological index.

Using the monotonicity of the cohomological index, one verifies that these families satisfy the following inclusions $\mathcal{M}_p\subset \mathcal{F}_p \subset \mathcal{C}_p$. Furthermore, the proofs in Sections \ref{sec:bounds} and \ref{sec:upperbound} give sublinear bounds from above and from below for the min-max energy values associated to $\mathcal{M}_p$ and $\mathcal{C}_p$, respectively. Hence, these values have the same asymptotic behavior with respect to $p$ as $\e \to 0^+$.
\end{remarq}

\section{Upper Bound} \label{sec:upperbound}
In the next sections we study the asymptotic behavior of $\liminf_{\e \to 0}c_\e(p)$, the limit of $p$-th min-max value of $E_\e$ with respect to the families $\mathcal{F}_p$, as $p \to \infty$. 

The asymptotic behavior of the min-max widths for the area functional has been studied previously by Gromov \cite{Gromov}, Guth \cite{Guth} and Marques-Neves \cite{MarquesNevesInfinite}. The bounds obtained, in this section and the next, may be seen as the analogue of Gromov-Guth bounds for the $p$-widths of $M$ in the context of phase transitions. Our proof is an adaptation to the Sobolev space context of the one presented in \cite{MarquesNevesInfinite}.

In this section we prove the following sublinear upper bound for the min-max values $c_\e(p)$.

\begin{theorem}\label{upperbound} For each $\e>0$ and $p\in\N$, there is a continuous odd map  $$\hat{h} :S^p \to H^1(M)\setminus \{0\}$$ such that $$\frac{1}{2\sigma}\sup_{S^p}E_\e \circ \hat h\leq C p^{1/n},$$ with $C=C(M)>0$. Notice that by the monotonicity of ${\rm Ind}_{\Z/2}$ we have $\hat h_a(S^p) \in \mathcal{F}_p$. In particular, for every $p\in \N$, $$\frac{1}{2\sigma}c_\e(p)\leq C p^{1/n}.$$
\end{theorem}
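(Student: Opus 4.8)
The plan is to build the odd map $\hat h:S^p\to H^1(M)\setminus\{0\}$ explicitly by transplanting a good sweepout of $M$ by boundaries into the Sobolev space via the standard profile construction, and then to control the energy of the resulting functions using the efficient $p$-sweepouts of the area functional constructed by Guth and Marques--Neves. First I would recall the one-dimensional heteroclinic profile $\mathbb{H}:\R\to(-1,1)$ solving $\mathbb{H}'' = W'(\mathbb{H})$ with $\mathbb{H}(\pm\infty)=\pm1$, and its $\e$-rescaling $\mathbb{H}_\e(t)=\mathbb{H}(t/\e)$; the key point is that for a hypersurface $\Sigma=\partial\Omega$, the function $u_{\Omega,\e}(x) = \mathbb{H}_\e(\pm\mathrm{dist}(x,\Sigma))$ (with sign according to whether $x\in\Omega$) has energy $E_\e(u_{\Omega,\e}) \to 2\sigma\,\mathcal{H}^{n-1}(\Sigma)$ as $\e\to 0$, with a uniform comparison $E_\e(u_{\Omega,\e}) \leq 2\sigma\,\mathcal{H}^{n-1}(\Sigma) + o(1)$ in fact holding with a constant independent of $\Omega$ as long as $\mathcal{H}^{n-1}(\Sigma)$ stays bounded; more robustly, one uses the cruder bound $E_\e(u_{\Omega,\e})\le C(\mathcal{H}^{n-1}(\Sigma) + \mathrm{Vol}(M))$ valid for \emph{all} $\e$. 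Since one wants a statement for fixed $\e>0$ and all $p$, and the final conclusion only asserts $\tfrac{1}{2\sigma}c_\e(p)\le Cp^{1/n}$ (for $p$ large the quantity $\mathrm{Vol}(M)$ is absorbed), this crude bound suffices after adjusting $C$.

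Next I would invoke the Gromov--Guth / Marques--Neves result: there is a constant $C_0=C_0(M)$ and, for each $p$, a continuous map $\Phi_p:S^p\to\mathcal{Z}_{n-1}(M,\Z/2)$ (or rather an odd map from $\R\p^p$ detecting the generator of $H^1$, which unwraps to an odd map on $S^p$) which is a $p$-sweepout with $\sup_{x\in S^p}\M(\Phi_p(x)) \le C_0 p^{1/n}$. After the usual Almgren-type discretization one may assume each $\Phi_p(x)$ is of the form $\partial\llbracket\Omega_x\rrbracket$ for open sets $\Omega_x$ of finite perimeter depending continuously (in the flat topology, hence in $L^1$ on the indicator functions) on $x$, with $\Omega_{-x} = M\setminus\Omega_x$ up to null sets. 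Composing with the profile map $\Omega\mapsto u_{\Omega,\e}$ — which I would need to check is continuous from $\{$finite perimeter sets, $L^1\}$ into $H^1(M)$, using that $\mathrm{dist}(\cdot,\partial\Omega)$ varies continuously and the mollified profile is Lipschitz — gives the desired $\hat h:S^p\to H^1(M)$. Oddness $\hat h(-x) = -\hat h(x)$ follows from $\Omega_{-x} = M\setminus\Omega_x$ and the oddness of $\mathbb{H}_\e$; non-vanishing ($\hat h(x)\ne 0$) holds since $|u_{\Omega,\e}|$ is bounded below away from $0$ on a set of positive measure for any $\Omega$. The energy estimate then reads $\tfrac{1}{2\sigma}\sup_{S^p}E_\e\circ\hat h \le \tfrac{1}{2\sigma}\sup_x C(\M(\Phi_p(x)) + \mathrm{Vol}(M)) \le C' p^{1/n}$ for a new constant $C'=C'(M,\e)$; I'd have to track that, while the profile construction introduces $\e$-dependence, the \emph{growth rate} in $p$ comes only from $\sup_x\M(\Phi_p(x))\le C_0 p^{1/n}$, so the exponent $1/n$ is preserved and the constant can be taken uniform in small $\e$ after using bounded-discrepancy type estimates (Lemma \ref{technical1}(3)). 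The last sentence of the theorem — that $\hat h(S^p)\in\mathcal{F}_p$ and hence $\tfrac{1}{2\sigma}c_\e(p)\le Cp^{1/n}$ — is then immediate from monotonicity of $\ind_{\Z/2}$ applied to the odd map $S^p\to\hat h(S^p)$, giving $\ind_{\Z/2}(\hat h(S^p))\ge \ind_{\Z/2}(S^p) = p+1$.

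The main obstacle I anticipate is the discretization/interpolation step: the Gromov--Guth sweepouts live in the space of cycles with the flat topology and are only "no concentration of mass" continuous, so turning them into genuinely continuous-in-$H^1$ families of profile functions requires the full Almgren isomorphism / interpolation machinery (pull-tight, the combinatorial argument on cubical complexes $Q(m,k)$ mentioned in the Notation) to replace a discrete sweepout by a continuous one with controlled mass, and then a careful check that the set-to-profile map $\Omega\mapsto u_{\Omega,\e}$ is continuous into $H^1$ — the subtlety being that $L^1$-convergence of indicator functions does not control $\partial\Omega$ in the Hausdorff sense, so one must either regularize the boundaries or work with a mollified/averaged version of the distance-profile construction that depends continuously on the $L^1$-datum. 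A secondary point requiring care is making the constant $C$ genuinely independent of $\e$ (not merely finite for each $\e$): this uses that for the profile functions the discrepancy $\xi_\e$ is controlled and the excess energy over $2\sigma\mathcal{H}^{n-1}(\Sigma)$ is $o(1)$ uniformly, which is where Lemma \ref{technical1}(3) and standard Allen--Cahn profile estimates enter. Everything else — oddness, non-vanishing, the index computation — is routine.
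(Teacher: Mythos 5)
Your overall strategy (transplant an efficient sweepout by boundaries into $H^1(M)$ via the one-dimensional profile composed with a signed distance) is the right one in spirit, and it is indeed the paper's starting point, but the two difficulties you flag at the end are not peripheral ``obstacles to anticipate'' --- they are the entire content of the proof, and your proposed resolutions would not work as stated. First, the continuity problem at cancellations is not fixable by a generic mollification of the set-to-profile map: when two leaves of a mod-$2$ sweepout collide and cancel, the cycles converge in the flat norm and the sets $\Omega_x$ converge in $L^1$, but the profile functions $\mathbb{H}_\e(\pm\mathrm{dist}(\cdot,\partial\Omega_x))$ do \emph{not} converge in $H^1$ (or even in $L^2$) to the constant $\pm 1$; the transition layer persists with energy bounded away from zero while the limit has none. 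The paper resolves this with a specific device: the sweepout is parametrized by polynomials $P_a$, $a\in S^p$, whose slices are the real roots, and the modified distance $d_z(x)=\mathrm{dist}_K(x,\Sigma_{\mathrm{Re}(z)})+\mathrm{dist}_{\mathbb{C}}(z,f(K))$ includes the distance of the (possibly complex) root $z$ to $f(K)$, so that when two real roots merge and leave the real axis the function $d_a$ is pushed uniformly to $+\infty$ and $\psi_\e\circ h_a$ tends to $\pm 1$ continuously. This is a genuinely new construction, not an application of the Almgren/Zhou interpolation machinery (which the paper uses only for the \emph{lower} bound comparison in Section \ref{sec:comp}, in the opposite direction).

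Second, and independently, your energy estimate has a gap: the coarea formula gives $E_\e(u_{\Omega,\e})=\int\bigl[\e\psi_\e'(s)^2/2+W(\psi_\e(s))/\e\bigr]\,\mathcal{H}^{n-1}(\{d=s\})\,ds$, so what must be bounded by $Cp^{1/n}$ is the area of \emph{every level set of the distance function} to $\partial\Omega_x$, not merely ${\bf M}(\partial\llbracket\Omega_x\rrbracket)$. For a general finite-perimeter set produced by discretizing a Marques--Neves sweepout these parallel hypersurfaces can have area far exceeding the perimeter (think of a union of many tiny balls), so the bound $E_\e(u_{\Omega,\e})\le C(\mathcal{H}^{n-1}(\partial\Omega)+\mathrm{Vol}(M))$ you rely on is false in that generality. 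This is precisely why the paper does not take the Marques--Neves sweepouts as a black box but rebuilds them on a cubical complex $K$ bi-Lipschitz to $M$, with slices that are pieces of parallel hyperplanes in each $n$-cell (after Guth's bend-and-cancel map $F_\rho$) together with the $(n-1)$-skeleton $K(k)_{n-1}$: for such linear slices the distance level sets are explicitly controlled by $2pC_13^{-k(n-1)}+C_23^k\mathcal{H}^{n-1}(\partial I^n)$, which is $O(p^{1/n})$ upon choosing $3^k\approx p^{1/n}$. Finally, a minor point: the uniformity of the constant in $\e$ comes from the equipartition identity $\int\bigl[\e\psi_\e'^2/2+W(\psi_\e)/\e\bigr]\,ds\le 2\sigma$ for the one-dimensional profile, not from the bounded-discrepancy Lemma \ref{technical1}(3), which concerns solutions of the equation rather than the constructed test functions.
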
 

To prove this result we adapt Guth's \textit{bend-and-cancel} procedure \cite{Guth} to the context of phase transitions. In doing so, we follow ideas from \cite{MarquesNevesInfinite}, where Marques-Neves adapted Guth's argument to construct $p$-sweepouts of hypersurfaces in a general closed manifold with controlled area. 

Motivated by \cite{Guaraco}, we consider the composition of the one-dimensional solution to the Allen-Cahn equation with distance functions to slices of a $p$-sweepout with low waist. These new functions take values close to $\pm 1$ in a large set and have \textit{jumps} along the slices of the $p$-sweepout. Moreover, it is possible to compute its energies in terms of the areas of the slices. 

Two technical points are worth discussing here before going into the details. Both arise from the nature of the $p$-sweepouts in \cite{MarquesNevesInfinite}. The first one concerns the fact that the continuity of these sweepouts is measured with respect to coefficients modulo 2. In this way, two slices cancel out as they coincide. In such a situation, simply composing with a signed distance function wont produce a continuous family in $H^1(M)$. We account for this by considering \textit{modified} distance functions that smooth out the cancelation of the leaves. The second observation is that computations to estimate the energy of the functions produced do not fit well the $p$-sweepouts presented in \cite{MarquesNevesInfinite}. So, we modify their construction to get slightly \textit{simpler} sweepouts for which computations are easier. More precisely, after identifying $M$ with a cubical complex $K$, we construct  $p$-sweepouts over $K$ with \textit{linear} slices. Then, roughly speaking, we construct our sweepouts for the energy functional on $H^1(K)$, which can be identified with $H^1(M)$. 

\subsection*{Construction of a linear 1-sweepout on $K$ and modified distance functions}

First, remember that any compact smooth manifold can be triangulated. Hence, by \cite{BuchstaberPanov}, Chapter 4, there exists a $n$-dimensional cubical subcomplex $K$ of $I^m$ for some $m$, and a bi-Lipschitz homeomorphism $G:K\to M$. For each $k \in \mathbb{N}$, denote by $c(k)$ the center of the cubes $\alpha \in K(k)_{n}$. 

We need a few preliminary lemmas. The first one is simple and we leave its proof to the reader.

\begin{lemma}\label{directions} For almost every direction $v\in S^{m-1}=\{x\in \R^m : |x|=1\}$ we have:
\begin{enumerate}
\item $v$ is not orthogonal to any cube $\alpha \in K(k)_n$.
\item $\langle v, x \rangle \neq \langle v, y \rangle$, for $x\neq y$ in $c(k)$. \end{enumerate}

Define $f(x)=\langle x ,v \rangle$, with $v$ satisfying (1) and (2). Then,
\begin{enumerate} \setcounter{enumi}{2}
 \item The level sets of $f$ are sections of parallel $(n-1)$-planes on each cube $\alpha \in K(k)_{n}$.  
 \item There exists a small $\rho>0$ such that every level set $f^{-1}(s)$ intersects at most one of the open $n$-cubes centered at points in $c(k)$ with side of length $\rho$.
\end{enumerate}

\end{lemma}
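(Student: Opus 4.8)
The plan is a routine measure-theoretic argument: I would show that conditions (1) and (2) each fail only on a countable union of proper subspheres of $S^{m-1}$ (hence on a null set), and then deduce (3) and (4) deterministically for every $v$ outside that exceptional set.

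For (1), fix $k$ and an $n$-cell $\alpha \in K(k)_n$. As a cell of a cubical complex in $I^m$, $\alpha$ lies in an affine $n$-plane whose direction space $V_\alpha$ is a coordinate $n$-subspace of $\R^m$; then $v \perp \alpha$ iff $v \in V_\alpha^{\perp}$, a proper linear subspace. Intersecting with $S^{m-1}$ gives a subsphere of dimension $\le m-2$, which has measure zero. Since $K(k)_n$ is finite for each $k$ and $\N$ is countable, the union of these bad sets over all $\alpha$ and all $k$ is null. For (2), fix $k$ and distinct centers $x, y \in c(k)$: the bad set $\{\,v \in S^{m-1} : \langle v, x-y\rangle = 0\,\}$ is again an $(m-2)$-subsphere (as $x - y \ne 0$), and there are finitely many pairs for each $k$; so the full exceptional set for (1) and (2) is null. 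I fix $v$ in its complement and set $f(x) = \langle x, v\rangle$.

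Item (3) is then immediate: on each $\alpha \in K(k)_n$ the restriction of $f$ to the affine plane $P_\alpha \supset \alpha$ is affine with linear part the orthogonal projection $\pi_\alpha(v)$ of $v$ onto $V_\alpha$, and condition (1) says exactly $\pi_\alpha(v) \ne 0$, so $f|_{P_\alpha}$ is nonconstant and its level sets in $P_\alpha$ are parallel affine $(n-1)$-planes; intersecting with $\alpha$ gives the claimed sections. For (4), fix $k$ and put $\delta = \min\{\,|\langle v,c\rangle - \langle v,c'\rangle| : c \ne c' \text{ in } c(k)\,\}$, which is positive by (2) and the finiteness of $c(k)$. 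For the open cube of side $\rho$ centered at $c$, every point $x$ in it satisfies $|\langle v, x-c\rangle| \le \|x-c\|_{\infty}\,\|v\|_1 < \tfrac{1}{2}\rho\sqrt{m}$ (using $\|v\|_2 = 1$), so $f$ maps this cube into the open interval of radius $\tfrac{1}{2}\rho\sqrt{m}$ about $\langle v,c\rangle$. Choosing $\rho < \delta/\sqrt{m}$ makes these intervals pairwise disjoint as $c$ ranges over $c(k)$, and hence every level set $f^{-1}(s)$ meets at most one such cube; the dependence of $\rho$ on $k$ and $v$ is harmless since the statement only asserts existence of some small $\rho$.

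There is no genuine obstacle here — the lemma really is elementary, which is why the authors leave it to the reader. The only two points worth a moment's care are that the exceptional sets in (1)--(2) must be shown null uniformly over all subdivision levels $k$ (a countable union of null sets), and that in (4) the radius $\rho$ must be chosen only after $k$ and $v$ are fixed, so that the minimal gap $\delta$ between the projections $\langle v, c\rangle$ of the centers is available and strictly positive.
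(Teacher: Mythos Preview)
Your argument is correct and is precisely the routine verification the authors have in mind; the paper gives no proof at all, stating that the lemma ``is simple and we leave its proof to the reader.'' One minor remark: the subdivision level $k$ is fixed throughout the lemma, so your care about taking a countable union over all $k$ is unnecessary (though harmless).
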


The idea behind Guth's bend-and-cancel argument is to deform a sweepout using a map that projects the complement of a small neighborhood of $c(k)$ onto the $(n-1)$-skeleton $K(k)_{n-1}$. We do this in a way that allow us to control the shape of the new sweepout in every cube $\alpha \in K(k)_{n}$.

If $x$ is the center of a cube $\alpha \in K(k)_{n}$, define $\alpha_r(x)\subset \alpha$ as the $n$-cube centered at $x$ with sides of length $r$ and parallel to $\alpha$, i.e. $\alpha_r(x)=r\cdot(\alpha-x)+x$.

\begin{lemma}
For every  $0<r<1$ there exists a Lipschitz map $F_r:K\to K$ such that 
\begin{enumerate}
\item $F_r(K\setminus \cup_{x\in c(k)}\alpha_r(x))\subset K(k)_{n-1}$.
\item If $\pi \subset \alpha$ is a piece of a $(n-1)$-plane contained in $\alpha$, its image under $F$ is contained in the union of another $(n-1)$-plane with the $(n-1)$-skeleton. More precisely, $F(\pi)\subset \tilde\pi \cup K(k)_{n-1}$, where $\tilde \pi$ is a piece of a $(n-1)$-plane contained in $\alpha$.
\end{enumerate}
\end{lemma}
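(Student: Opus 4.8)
The plan is to build $F_r$ one top-dimensional cube at a time and then check that the pieces patch together along shared faces. Fix a cube $\alpha\in K(k)_n$ with center $x\in c(k)$, and use an affine rescaling to identify $\alpha$ with the standard cube $Q=[-1,1]^n$ so that $x$ becomes the origin and $\alpha_r(x)$ becomes $[-r,r]^n$. Writing $N(y)=\max_i|y_i|$ for the sup-norm on $Q$, so that $\partial Q=\{N=1\}$ and $[-r,r]^n=\{N\le r\}$, I would define (in these model coordinates)
$$F_r(y)=\begin{cases} y/r, & N(y)\le r,\\[2pt] y/N(y), & N(y)\ge r,\end{cases}\qquad F_r(0)=0.$$
This is the sup-norm version of Guth's ``blow-up inside, radially project outside'' map: it dilates the small central cube onto all of $Q$ and sends everything outside the small cube to $\partial Q$.

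The first things to verify are elementary. The two branches agree on $\{N=r\}$ (both equal $y/r$, a point of $\partial Q$), so $F_r$ is well defined and continuous; near the origin $F_r(y)=y/r$, and on $\{N\ge r\}$ the map is $y$ divided by a function bounded below by $r$, so $F_r$ is Lipschitz on $Q$ with constant a multiple of $1/r$. On $\partial Q$ one has $N(y)=1$, hence $F_r(y)=y$; thus $F_r$ restricts to the identity on the whole $(n-1)$-skeleton. Conjugating back by the affine identification $\alpha\cong Q$ (which sends hyperplanes to hyperplanes and is bi-Lipschitz with a constant depending only on the fixed complex $K(k)$), the cube-by-cube definitions agree on common faces and therefore assemble into a single Lipschitz map $F_r:K\to K$.

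Property (1) is then immediate: if $y\notin\operatorname{int}\alpha_r(x)$ then $N(y)\ge r$ in the model, so $F_r(y)=y/N(y)\in\partial Q$, which corresponds to a subset of $\partial\alpha\subset K(k)_{n-1}$. For property (2), fix a piece $\pi\subset\alpha$ of a hyperplane $H$ and split $\pi=\pi^{\mathrm{in}}\cup\pi^{\mathrm{out}}$ with $\pi^{\mathrm{in}}=\pi\cap\{N\le r\}$ and $\pi^{\mathrm{out}}=\pi\cap\{N\ge r\}$. On $\pi^{\mathrm{out}}$ we just saw $F_r$ takes values in $\partial Q\subset K(k)_{n-1}$. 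On $\pi^{\mathrm{in}}$ the map is the linear dilation $y\mapsto y/r$, which carries $\pi\cap[-r,r]^n$ into $\tilde\pi:=\tfrac1r H\cap Q$, again a piece of a hyperplane contained in the cube. Hence $F_r(\pi)\subset\tilde\pi\cup K(k)_{n-1}$, as required.

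I do not expect a serious obstacle here; the only points needing a little care are the matching of the two branches along $\{N=r\}$ and the fact that $F_r$ is the identity on $\partial Q$ (which is exactly what lets the cube-by-cube definitions patch into one Lipschitz self-map of $K$). The one mild subtlety worth flagging is that the outward projection $y\mapsto y/N(y)$ is only piecewise linear, not linear, so the image of $\pi^{\mathrm{out}}$ is not a hyperplane piece; but since that image lies in the $(n-1)$-skeleton, this is precisely what property (2) allows.
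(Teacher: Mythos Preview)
Your proof is correct and is essentially identical to the paper's own argument: both define the model map on $[-1,1]^n$ by $y\mapsto y/r$ on the small cube and $y\mapsto y/\|y\|_\infty$ outside it, then conjugate by an affine identification of each top cell with the model and patch via the identity on boundaries. Your write-up is in fact more explicit than the paper's (which simply asserts the map ``is well defined and satisfies the desired properties''), particularly in checking the branch-matching, the Lipschitz bound, and the fact that $F_r$ fixes $\partial Q$.
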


\begin{proof}
Consider the cubes $\alpha_r=[-r,r]^n$, for $0<r\leq1$, and define $P_r : [-1,1]^n \to [-1,1]^n$ as $$P_r(x)=\begin{cases} \frac{x}{r} & \text{ if }  x\in \alpha_r \\ \frac{x}{\| x\|_{\infty}} & \text{ if }  x\in \alpha \setminus \alpha_r \end{cases},$$ where $\|x\|_\infty=\max\{|x_1|,\dots,|x_n|\}$ for $x=(x_1,\dots,x_n)\in \R^n$. Notice that the restriction of $P_r$ to $\alpha_r$ to is an homothety. In particular, it sends pieces of hyperplanes into pieces of hyperplanes.

For each $\alpha \in K(k)_n$ we pick a linear homomorphism $L_\alpha : \alpha_1 \to \alpha$ such that $L_\alpha(0)=c_\alpha$, where $c_\alpha$ is the center of $\alpha$, and define $$P_{r,\alpha}:\alpha \to \alpha,\ \  \ \ P_{r,\alpha}= L_\alpha  \circ P_r \circ L_\alpha^{-1}.$$ Finally, we define $F_r: K \to K$ by $F_r(x)=P_{r,\alpha}(x)$ whenever $x\in\alpha$. The map is well defined and satisfies the desired properties. 
\end{proof}

Let $f(x)=\langle x ,v \rangle$ where $v$ is one of the directions given by Lemma \ref{directions}. The level sets $f^{-1}(s)$ are piecewise linear closed hypersurfaces. These sets do not necessarily vary continuously on $s$ with respect to the Hausdorff distance in $K$ because they might become empty near vertices that are local maxima or minima. To rule out this possibility it is enough to add a fixed compact set containing $K_0$ in the definition of the sweepouts. We choose to add $K(k)_{n-1}$ because it will simplify other arguments below. Consider the family:
	\[\Sigma_s = F_\rho(f^{-1}(s) \cup K(k)_{n-1}), \quad \mbox{for} \quad s \in \R.\]

\begin{claim}\label{claim1}
Let $\rho>0$ given by Lemma \ref{directions}. The family $\{\Sigma_s\}_{s \in \R}$ varies continuously in the Hausdorff distance. 
\end{claim}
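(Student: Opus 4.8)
The plan is to reduce the continuity of $s \mapsto \Sigma_s$ in the Hausdorff metric to two separate facts: the continuity of $s \mapsto f^{-1}(s) \cup K(k)_{n-1}$, and the Lipschitz continuity of the map $F_\rho$. Since a Lipschitz map pushes forward a Hausdorff-continuous family to a Hausdorff-continuous family (if $d_{\mathcal H}(A,B) \leq \delta$ and $F$ is $L$-Lipschitz, then $d_{\mathcal H}(F(A),F(B)) \leq L\delta$, because $F$ of a $\delta$-neighborhood of $A$ lies in an $L\delta$-neighborhood of $F(A)$ and symmetrically), it suffices to establish the first fact. So the main work is to show that $s \mapsto f^{-1}(s) \cup K(k)_{n-1}$ is continuous in the Hausdorff distance on $K$.

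First I would record why the bare level sets $f^{-1}(s)$ can fail to be Hausdorff-continuous, and why adjoining $K(k)_{n-1}$ repairs this. On each top cell $\alpha \in K(k)_n$, the function $f = \langle \cdot, v\rangle$ restricts to a nonconstant affine function (by property (1) of Lemma \ref{directions}, $v$ is not orthogonal to $\alpha$), so $f^{-1}(s) \cap \alpha$ is either empty or a hyperplane slice of $\alpha$, and it varies continuously in the Hausdorff distance \emph{on} $\alpha$ as long as it stays nonempty; the only discontinuity is the jump from empty to nonempty (or back), which occurs exactly when $s$ crosses $\min_\alpha f$ or $\max_\alpha f$, i.e.\ when $f^{-1}(s)$ passes through a vertex of $\alpha$. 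But at such a moment the slice $f^{-1}(s) \cap \alpha$ degenerates to a single vertex, which lies in the $(n-1)$-skeleton $K(k)_{n-1}$; hence, after taking the union with $K(k)_{n-1}$, the set $(f^{-1}(s) \cap \alpha) \cup (K(k)_{n-1}\cap\alpha)$ depends continuously on $s$ across that value as well. I would make this quantitative: fix $s_0$ and $\eta > 0$; on each of the finitely many top cells $\alpha$, choose $\delta_\alpha$ so that for $|s - s_0| < \delta_\alpha$ the slice $f^{-1}(s)\cap\alpha$ (possibly empty) is within Hausdorff distance $\eta$ of $f^{-1}(s_0)\cap\alpha$ \emph{after} adjoining the skeleton — using uniform continuity of the affine slicing away from the critical values, and the fact that near a critical value the nonempty slice is $\eta$-close to a skeleton vertex. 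Then $\delta = \min_\alpha \delta_\alpha$ works for $K$, since $f^{-1}(s) \cup K(k)_{n-1} = \bigcup_\alpha \big((f^{-1}(s)\cap\alpha) \cup (K(k)_{n-1}\cap\alpha)\big)$ and the Hausdorff distance of a finite union is at most the max of the Hausdorff distances of the pieces.

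Next I would invoke that $F_\rho$ is Lipschitz — this is part of the conclusion of the preceding lemma, where $F_\rho$ was built as a piecewise composition $L_\alpha \circ P_\rho \circ L_\alpha^{-1}$ of linear maps and the explicitly bounded radial map $P_\rho$, agreeing on overlaps of cells — so $\Sigma_s = F_\rho(f^{-1}(s) \cup K(k)_{n-1})$ inherits Hausdorff continuity from $f^{-1}(s) \cup K(k)_{n-1}$ by the pushforward remark above. One small point to verify is that $K(k)_{n-1} \subset f^{-1}(s) \cup K(k)_{n-1}$ is carried into $\Sigma_s$ for every $s$, i.e.\ every $\Sigma_s$ contains the fixed set $F_\rho(K(k)_{n-1})$; this follows because the union always contains the skeleton, and it is the reassuring reason the family never collapses to something with a hole.

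\textbf{Expected main obstacle.} The delicate step is the quantitative estimate near a critical value of $f$ on a given top cell: one must check that as $s \to \min_\alpha f$ from above, the slice $f^{-1}(s) \cap \alpha$ not only shrinks in diameter but stays within a Hausdorff $\eta$-neighborhood of the relevant skeleton vertex — and, crucially, that the reverse inclusion holds, namely that the skeleton vertex is itself $\eta$-close to the (nearly empty) slice-plus-skeleton set, which is automatic only because we kept all of $K(k)_{n-1}$ in the union. Handling the bookkeeping so that one $\delta$ works simultaneously across the finitely many cells, and across cells where $s_0$ is a critical value for one cell but interior for an adjacent one, is the part that needs care but is ultimately routine given the piecewise-linear structure.
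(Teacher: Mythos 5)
The paper does not actually contain a proof of this claim --- it states ``We omit the proof of this claim'' --- so there is no argument of record to compare yours against; your proposal supplies a correct one, and it is the natural one: push the problem through the Lipschitz map $F_\rho$ via $d_{\mathcal H}(F_\rho(A),F_\rho(B))\leq L\,d_{\mathcal H}(A,B)$, then prove continuity of $s\mapsto f^{-1}(s)\cup K(k)_{n-1}$ cell by cell, using that the Hausdorff distance of a finite union is at most the maximum over the pieces. The only imprecision is minor: at $s=\min_\alpha f$ the slice $f^{-1}(s)\cap\alpha$ degenerates not necessarily to a single vertex but to the face of $\alpha$ on which the linear function $f$ attains its minimum (Lemma \ref{directions}(1) only guarantees that $f$ is nonconstant on each $n$-cell, not that it separates the vertices of a given cell); since that face is in any case a proper face of $\alpha$ and hence contained in $K(k)_{n-1}$, your argument goes through verbatim with ``vertex'' replaced by ``face of the $(n-1)$-skeleton''. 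Your identification of the delicate point --- that the reverse Hausdorff inclusion near a critical value is rescued precisely by keeping all of $K(k)_{n-1}$ in the union --- is exactly the reason the skeleton was adjoined in the first place.
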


We omit the proof of this claim. This finish the construction of an uniparametric linear sweepout on $K$. From this sweepout we will construct our piecewise linear $p$-sweepouts by combining slices parametrized by the roots of polynomials of degree at most $p$, as in \cite{MarquesNevesInfinite}. However, in contrast to \cite{MarquesNevesInfinite}, we consider complex roots of the polynomials in order to \textit{smooth out} the cancellation of the slices. All this information will be enclosed in the following modified distance functions.

Given $z\in \mathbb{C}$, consider the distance function $$d_z:K\to \R_{\geq 0},\ \ \ d_z(x)={\rm dist}_K(x,\Sigma_{{\rm Re}(z)})+{\rm dist}_{\mathbb{C}}(z,f(K)).$$ This function is the building block of our $p$-sweepouts which we construct in the following way.

For each $a=(a_0,\dots,a_p)\in S^p$, consider the polynomial $P_a(z)=\sum_{i=0}^{p}a_i z^i$ and let $C(a)$ be the set of its roots in the complex plane. We then define the functions $$d_a(x)=\begin{cases} \min \{d_z(x) :z\in C(a)\} & \text{if $C(a)\neq\emptyset$}\\ +\infty & \text{if $C(a)=\emptyset$}\end{cases}.$$ Finally, define $$h_a(x)={\rm sgn}_a(x)d_a(x),$$ where ${\rm sgn}_a(x)={\rm sgn}(P_a\circ f (y))$ for any $y\in F_\rho^{-1}(x)$, whenever $d_a(x)>0$. In Step 1 of Claim \ref{sweepout_upper} we show that ${\rm sgn}_a$ is well defined. Notice that $h_a$ is simply an signed version of $d_a$ which is equivariant on $a$.

In the following two claims we establish several properties of $d_z$ and $h_a$, before presenting the proof of the main result.

Applying Proposition 9.2 from \cite{Guaraco} in every cube $\alpha\in K(k)_n$, we obtain:

\begin{claim} \label{distancia}
$d_z$ is a Lipschitz function with Lipschitz constant 1. Also
\begin{enumerate}
\item $|\nabla d_z|=1$ a.e. on $K$.
\item If $z_n\to z$, then $d_{z_n}\to d_z$ and $\nabla d_{z_n} \to \nabla d_z$ a.e. on $K$.
\end{enumerate}
\end{claim}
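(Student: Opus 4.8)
The plan is to reduce the claim to the behaviour of the distance function to the piecewise-linear hypersurface $\Sigma_s$, and then invoke Proposition 9.2 of \cite{Guaraco} inside each top-dimensional cube of $K$. First I would observe that the summand ${\rm dist}_{\mathbb{C}}(z,f(K))$ in the definition of $d_z$ is independent of $x$: since $f$ is real-valued, $f(K)\subset\R\subset\mathbb{C}$ is a compact interval, so $z\mapsto{\rm dist}_{\mathbb{C}}(z,f(K))$ is a $1$-Lipschitz function of $z$ alone. Hence $d_z(\cdot)={\rm dist}_K(\cdot,\Sigma_{{\rm Re}(z)})+\mathrm{const}$, and since $\Sigma_{{\rm Re}(z)}$ is closed and nonempty (it contains $K(k)_{n-1}$), $d_z$ is automatically $1$-Lipschitz on $K$; in particular $|\nabla d_z|\le 1$ a.e.

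For the equality $|\nabla d_z|=1$ a.e.\ I would work inside a fixed cube $\alpha\in K(k)_n$. For $x$ in the interior of $\alpha$, every nearest point of $\Sigma_{{\rm Re}(z)}$ lies in $\bar\alpha$, because $\partial\alpha\subset K(k)_{n-1}\subset\Sigma_{{\rm Re}(z)}$; so $d_z$ restricted to the interior of $\alpha$ equals the Euclidean distance to the closed set $\Sigma_{{\rm Re}(z)}\cap\bar\alpha$, which by the properties of $F_\rho$ established above is a finite union of polytope pieces of dimension at most $n-1$ (the affine piece $\tilde\pi\cap\bar\alpha$ together with faces of $\alpha$). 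Proposition 9.2 of \cite{Guaraco} then yields $|\nabla d_z|=1$ a.e.\ on $\alpha$, and since $K$ is the union of finitely many such cubes (with boundaries of measure zero), the assertion holds on all of $K$. The same localization handles item (2): given $z_n\to z$, Claim \ref{claim1} gives $\Sigma_{{\rm Re}(z_n)}\to\Sigma_{{\rm Re}(z)}$ in the Hausdorff distance, so ${\rm dist}_K(\cdot,\Sigma_{{\rm Re}(z_n)})\to{\rm dist}_K(\cdot,\Sigma_{{\rm Re}(z)})$ uniformly on the compact set $K$; combined with ${\rm dist}_{\mathbb{C}}(z_n,f(K))\to{\rm dist}_{\mathbb{C}}(z,f(K))$ this gives $d_{z_n}\to d_z$ uniformly. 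For the a.e.\ convergence of the gradients I would again argue cube by cube: since $f$ is linear and $F_\rho$ piecewise linear, the slice $F_\rho(f^{-1}(s))\cap\bar\alpha$ depends continuously (indeed piecewise-linearly) on $s={\rm Re}(z)$ and converges to $F_\rho(f^{-1}({\rm Re}(z)))\cap\bar\alpha$, so applying Proposition 9.2 of \cite{Guaraco} to this moving family of affine slices gives $\nabla d_{z_n}\to\nabla d_z$ a.e.\ on $\alpha$, and the finite union over cubes finishes the argument.

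I expect the main obstacle to be exactly the a.e.\ convergence of the gradients, since it genuinely uses the structure of the cut locus: away from $\Sigma_s$ and away from the measure-zero set of points with more than one nearest point, the nearest-point projection is single-valued and stable under Hausdorff-small perturbations of the hypersurface, which is what forces $\nabla d_{z_n}(x)=(x-y_n)/|x-y_n|$ to converge to $\nabla d_z(x)$; one must also observe that the countable union of exceptional sets (non-differentiability of $d_{z_n}$ for each $n$, plus the cut locus of $\Sigma_{{\rm Re}(z)}$) still has measure zero. This is precisely what is packaged in Proposition 9.2 of \cite{Guaraco} applied in a single cube, so the role of this argument is mainly to set up the cube-by-cube reduction carefully --- in particular to verify that $\Sigma_s\cap\bar\alpha$ remains a controlled union of affine pieces of dimension $\le n-1$ as $s$ varies, using Lemma~\ref{directions}(4) to keep the ``bending'' regions near the cube centers disjoint from all but one level set of $f$.
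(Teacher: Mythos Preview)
Your proposal is correct and follows essentially the same approach as the paper: the paper's entire proof is the single sentence ``Applying Proposition 9.2 from \cite{Guaraco} in every cube $\alpha\in K(k)_n$, we obtain'' placed immediately before the claim. Your write-up simply makes explicit the cube-by-cube reduction (via the observation $\partial\alpha\subset K(k)_{n-1}\subset\Sigma_{{\rm Re}(z)}$) and the role of Claim~\ref{claim1} for the convergence, which the paper leaves implicit.
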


\begin{claim} \label{sweepout_upper} Let $\psi_\e$ be the 1-dimensional solution to the Allen-Cahn equation. Then $\hat h_a=\psi_\e \circ h_a \circ G^{-1}$ is a $p$-sweepout.
\end{claim}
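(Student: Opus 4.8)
The plan is to verify that $\hat h_a = \psi_\e \circ h_a \circ G^{-1}$ satisfies the definition of a $p$-sweepout, namely that the family $\{\hat h_a\}_{a \in S^p}$ depends continuously on $a$ (in the $H^1(M)$-topology), that it is odd in $a$, and that it is topologically nontrivial in the appropriate cohomological sense (the detection property: the restriction of the natural classifying map to $S^p$ is essential, equivalently $\hat h_a(S^p) \in \mathcal{F}_p$). I would organize the proof into the numbered steps the paper anticipates, starting with Step 1, where I establish that $\mathrm{sgn}_a$ is well defined: the point is that if $d_a(x) > 0$ then $x$ does not lie on any slice $\Sigma_{\mathrm{Re}(z)}$ for $z \in C(a)$, hence (using that $F_\rho$ collapses only onto the skeleton, and the skeleton is contained in every slice) the sign of $P_a \circ f$ is constant on the fiber $F_\rho^{-1}(x)$, and moreover locally constant in $x$; here I would lean on Lemma~\ref{directions}(3)--(4) and Claim~\ref{claim1} to control how slices meet cubes.

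Next I would prove continuity of $a \mapsto h_a$ as a map $S^p \to H^1(K)$. The key analytic input is Claim~\ref{distancia}: each $d_z$ is $1$-Lipschitz with $|\nabla d_z| = 1$ a.e., and $d_{z_n} \to d_z$ together with $\nabla d_{z_n} \to \nabla d_z$ a.e.\ when $z_n \to z$. Since the roots of $P_a$ depend continuously on $a$ away from the locus where the leading coefficients vanish (and one handles the degree drop via the term $\mathrm{dist}_\mathbb{C}(z, f(K))$, which pushes vanishing-at-infinity roots out to where $d_z$ is large and eventually $\equiv d_a$ becomes irrelevant compared to other roots — this is exactly why complex roots and the extra distance term were introduced, to smooth the cancellation), the functions $d_a = \min_{z \in C(a)} d_z$ vary continuously in $a$ pointwise and with uniformly bounded Lipschitz constant; then $h_a = \mathrm{sgn}_a \cdot d_a$ is continuous because the sign flips only where $d_a$ vanishes, so $h_a$ itself is Lipschitz with no jump. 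Composing with the fixed $1$-Lipschitz Allen–Cahn profile $\psi_\e$ and the bi-Lipschitz $G^{-1}$ preserves continuity and bounded-energy, and dominated convergence upgrades pointwise a.e.\ convergence of values and gradients to $H^1$-convergence. Oddness is immediate: $P_{-a} = -P_a$ so $C(-a) = C(a)$, hence $d_{-a} = d_a$, while $\mathrm{sgn}_{-a} = -\mathrm{sgn}_a$, giving $h_{-a} = -h_a$ and thus $\hat h_{-a} = \psi_\e(-h_a \circ G^{-1}) = -\psi_\e(h_a \circ G^{-1}) = -\hat h_a$ since $\psi_\e$ is odd.

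Finally, for the detection/non-triviality property I would follow \cite{MarquesNevesInfinite}: the map $a \mapsto [\![ \{h_a \circ G^{-1} > 0\}]\!] \in \mathcal{Z}_{n-1}(M;\Z/2)$ (the boundary of the superlevel set, which is essentially $G(\Sigma_s)$ averaged over the roots) pulls back the generator $\bar\lambda \in H^1(\mathcal{Z}_{n-1}(M;\Z/2);\Z/2)$ to the generator of $H^1(S^p;\Z/2)$ — one checks that restricting $a$ to a circle $\{(a_0, a_1, 0,\dots,0)\}$ recovers the one-parameter sweepout $\{\Sigma_s\}$ through every slice, which is the standard $1$-sweepout detecting $\bar\lambda$, and then $\bar\lambda^p$ pulls back nontrivially by the polynomial-root argument of Marques–Neves, so $\mathrm{Ind}_{\Z/2}(\hat h_a(S^p)) \geq p+1$. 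The main obstacle I expect is Step 2, the continuity at parameters $a$ where $P_a$ has a repeated real root or where two slices coincide: there $d_a$ is still continuous but one must argue carefully that the sign function $\mathrm{sgn}_a$ does not produce a discontinuity in $H^1$ — precisely, that as two slices of opposite sign merge, $d_a \to 0$ on the merging locus fast enough that $h_a$ (value and gradient) converges in $L^2$; this is where the imaginary parts of the roots and the $\mathrm{dist}_\mathbb{C}(z, f(K))$ correction do the real work, since they prevent a genuine sign sheet of fixed size from appearing and disappearing discontinuously, and I would make this quantitative using the $1$-Lipschitz bound on each $d_z$ and Claim~\ref{distancia}(2).
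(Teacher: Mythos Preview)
Your outline for well-definedness of $\mathrm{sgn}_a$, the Lipschitz property of $h_a$, continuity in $a$ via Claim~\ref{distancia} and dominated convergence, and oddness matches the paper's four-step proof essentially verbatim. Two points of divergence are worth noting.

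First, your detection argument is unnecessary overkill. In this paper, ``$p$-sweepout'' for the energy functional simply means a continuous odd map $S^p \to H^1(M)\setminus\{0\}$; the fact that $\hat h(S^p)\in\mathcal F_p$ then follows in one line from monotonicity of $\ind_{\Z/2}$ (this is exactly what Theorem~\ref{upperbound} states immediately after the Claim). There is no need to pass to $\mathcal Z_{n-1}(M;\Z/2)$, pull back $\bar\lambda$, or invoke the Marques--Neves polynomial-root argument --- that machinery belongs to Section~\ref{sec:comp}, not here.

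Second, your identification of the ``main obstacle'' is slightly off. The merging of two real roots is harmless: $d_a$ is a minimum of finitely many $1$-Lipschitz functions, so it and its a.e.\ gradient depend continuously on the root locations by Lemmas~\ref{truncar} and~\ref{lipschitz}, and $\mathrm{sgn}_a$ only matters where $d_a>0$. The genuinely delicate case is a \emph{degree drop}, i.e.\ $a_n\to a$ with some roots of $P_{a_n}$ escaping to infinity. The paper handles this by restricting attention to roots inside a large disk $D\supset C(a)$ and showing, via the $\mathrm{dist}_{\mathbb C}(z,f(K))$ term, that roots outside $D$ never realize the minimum in $d_{a_n}$; then Lemma~\ref{raizeshausdorff} labels the surviving roots consistently so that Claim~\ref{distancia}(2) applies. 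You gesture at this mechanism but attribute the difficulty to the wrong configuration.
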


\begin{proof}[Proof of Claim  \ref{sweepout_upper}] We divide the proof of this claim into the 4 steps below.

\

\noindent{\bf Step 1.} \textit{$h_a(x)$ is well defined for every $x$}. 

\

If $d_a(x)=0$, there is nothing to check since $h_a(x)=0$.  Then, assume $d_a(x)>0$ and that there are $y,w \in F_\rho^{-1}(x)$ such that $P_a(f(y))$ and $P_a(f(w))$ have different signs. Since the set $F_\rho^{-1}(x)$ is path connected, this implies there is a $q \in K$ such that $F_\rho(q)=x$ and $P_a(f(q))=0$. In particular, $f(q)\in C(a)$ and $x\in \Sigma_{f(q)}$. Then $d_a(x)=0$ which is a contradiction.

\

\noindent{\bf Step 2.} \textit{$h_a$ is a Lipschitz function for every $a\in S^p$}.

\

The fact that $d_a$ is a Lipschitz function follows immediately from the definition. We assert that ${\rm sgn}_a$ is constant in every connected component of $\{x: d_a(x)>0\}$. In fact, take $x\in K$ such that $d_{a}(x)>0$ and assume $P_a\circ f(y)>0$ for every $y \in F_{\rho}^{-1}(x)$. If the same does not hold for every point in a neighborhood of $x$, there would be a sequence $x_n\to x$ in $K$ such that \begin{itemize}\item $d(x_n)>0$ \item $y_n \in F_{\rho}^{-1}(x_n)$ such that $P_a\circ f(y_n)<0$ \item $y_n\to y \in F_{\rho}^{-1}(x)$.\end{itemize} Therefore, $P_a\circ f(y)\leq 0$ which is a contradiction.

Since ${\rm sgn}_a$ is constant in every connected component of $\{x: d_a(x)>0\}$, it is easy to see that $h_a= {\rm sgn}_a \cdot d_a$ is Lipschitz. In fact, take $x$ and $y \in K$. If ${\rm sgn}_a(x)={\rm sgn}_a(y)$ then $|h_a(x)-h_a(y)|\leq|d_a(x)-d_a(y)|\leq {\rm dist}_K(x,y)$. On the other hand, if ${\rm sgn}_a(x)\neq{\rm sgn}_a(y)$ they must belong to different connected components of $\{x: d_a(x)>0\}$. In this case, there exists $z\in K$ such that ${\rm dist}_K(x,y)={\rm dist}_K(x,z)+{\rm dist}_K(z,y)$ and such that $d_a(z)=0$. Then $|h_a(x)-h_a(y)|\leq|d_a(x)+d_a(y)|\leq|d_a(x)-d_a(z)+d_a(y)-d_a(z)|\leq {\rm dist}_K(x,z)+{\rm dist}_K(z,y) = {\rm dist}_K(x,y)$.

\

\noindent{\bf Step 3.} \textit{Let $a_n \to a \in S^p$. Then $\psi_\e \circ h_{a_n} \to \psi_\e \circ h_a$ and $\nabla\psi_\e \circ h_{a_n} \to \nabla\psi_\e \circ h_a$ a.e. on $K$}. 

\

We can assume $C(a)\neq \emptyset$. In fact, this happens only if $a=(\pm1,0,\dots,0)$ and Lemma \ref{raizeshausdorff} implies that $d_{a_n} \to +\infty$ uniformly in that case. Therefore, $\psi_\e \circ h_{a_n}\to \psi_\e \circ h_a=\pm1$ and $\nabla \psi_\e \circ h_{a_n} = \psi_\e'(h_{a_n}) \nabla h_{a_n} \to 0$ a.e. on $K$, by the properties of $\psi_\e$.

By Claim \ref{distancia}, we know that the function $d_z$ satisfies $|\nabla d_z|=1$ a.e. on $K$ and if $z_n\to z$, then $d_{z_n}\to d_z$ and $\nabla d_{z_n} \to \nabla d_z$ a.e. on $K$. The idea is to use Lemmas \ref{truncar}, \ref{raizeshausdorff} and \ref{lipschitz} from the appendix to conclude that a similar statement holds for $d_a$. 

Let $D\in \mathbb{C}$ be an open disc such that \begin{enumerate} \item $f(K) \cup C(a) \subset D$  \item \label{dist} ${\rm dist}_{\mathbb{C}}(w,f(K)) > \mathrm{diam}(M)+ {\rm dist}_{\mathbb{C}}(C(a),f(K))$ for every $w\in \mathbb{C}\setminus D$. \end{enumerate}

Take any sequence $a_n\to a$ in $S^p$. By Lemma \ref{raizeshausdorff} we know $C(a_n)\cap D\to C(a)$ in the Hausdorff distance. This and (\ref{dist}) implies that, for $n$ big enough, ${\rm dist}_{\mathbb{C}}(w,f(K)) > \mathrm{diam}(M)+{\rm dist}_{\mathbb{C}}(C(a_n)\cap D,f(K))$ for every $w\in \mathbb{C}\setminus D$. In particular, there is $z \in C(a_n)\cap D$ such that $d_z < d_w$ for every $w\in \mathbb{C}\setminus D$. More precisely, let $z \in C(a_n)\cap D$ be such that ${\rm dist}_{\mathbb{C}}(z,f(K))$ is minimum. Then, for every $x\in K$,
\begin{align*}
d_z(x) 	&=\mathrm{dist}(x,\Sigma_{\mathrm{Re}(z)})+{\rm dist}_{\mathbb{C}}(z,f(K)) \\
	&\leq \mathrm{diam}(M) +{\rm dist}_{\mathbb{C}}(C(a_n) \cap D,f(K)) \\
	& < {\rm dist}_{\mathbb{C}}(w,f(K)) \leq d_w(x).
\end{align*}
Therefore, to compute $d_{a_n}$ it is enough to take the minimum among the roots $C(a_n)\cap D$, i.e. $d_{a_n}(x)=\min\{  d_z(x) : z\in C(a_n)\cap D\}$. By Lemma \ref{raizeshausdorff} we can label these roots as $\xi_n = (z_1(n),\dots,z_j(n))$ and the roots of $C(a)$ as $\xi=(z_1,\dots,z_j)$, in such a way that $\xi_n \to \xi$. By Claim \ref{distancia}, Lemma \ref{truncar} and Item (3) from Lemma \ref{lipschitz} we conclude that if $a_n\to a$, then $d_{a_n}\to d_a$ and $\nabla d_{a_n} \to \nabla d_a$ a.e. on $K$.

Now, choose $x$ such that $d_a(x)>0$ and choose $y\in F_{\rho}^{-1}(x)$. For $n$ big $d_{a_n}(x)>0$ and since $P_{a_n}(f(y)) \to P_{a}(f(y))$ we must have ${\rm sgn}_{a_n}(x)\to {\rm sgn}_a(x)$. Therefore, $\nabla h_{a_n} \to \nabla h_{a}$ a.e. on $K$ and the statement of Step 3 holds because $\psi_\e$ is a smooth function.

\

\noindent{\bf Step 4.} The function $\hat h_a :S^p \to H^1(M)$ is odd, follows from ${\rm sgn}_a(x)=-{\rm sgn}_{-a}(x)$ whenever $d_a(x)>0$. Finally, that $\hat h_a$ is continuous follows from (2) of Lemma \ref{lipschitz}.

\end{proof}

Combining the results above we can prove the main theorem of this section.

\begin{proof}[Proof of Theorem \ref{upperbound}]

We can estimate the energy of $\hat h_a$ using (1) from Lemma \ref{lipschitz}. 

First, observe that since $|\nabla h_a|\equiv 1$ a.e. we can use the coarea formula to estimate the energies of $\psi_\e \circ h_a$ in each cube $\alpha \in K(k)_{n}$.

\begin{align*}E_\e|_{\alpha}(\psi_\e \circ h_a) & = \int_{\alpha} \frac{\e}{2} |\nabla\psi_\e \circ h_a|^2 + \frac{1}{\e}W(\psi_\e \circ h_a) \ d\mathcal{H}^n \\
& =\int_{-\infty}^{\infty}  \bigg[ \e\frac{\psi_\e'(s)^2}{2} + \frac{W( \psi_\e(s))}{\e}\bigg] \cdot  \mathcal{H}^{n-1}(\{h_a = s\}\cap \alpha)\ ds
\end{align*}

By (1) from Lemma \ref{lipschitz}, there is a constant $C>0$ such that $$E_\e(\hat h_a) = E_\e(\psi_\e \circ h_a \circ G^{-1}) \leq C \sum_{\alpha \in K(k)_n} \int_{\alpha} \frac{\e}{2} |\nabla\psi_\e \circ h_a|^2 + \frac{1}{\e}W(\psi_\e \circ h_a) \ d\mathcal{H}^n$$

And by the computation above we have $$E_\e(\hat h_a) \leq 2C \int_{-\infty}^{\infty}  \bigg[ \e\frac{\psi_\e'(s)^2}{2} + \frac{W( \psi_\e(s))}{\e}\bigg] \cdot  \mathcal{H}^{n-1}(\{x: h_a(x) = s\})\ ds,$$ since $  \sum_{\alpha \in K(k)_n}  \mathcal{H}^{n-1}(\{h_a = s\}\cap \alpha)\  \leq 2  \mathcal{H}^{n-1}(\{h_a = s\}),$ because we might be counting areas in $K(k)_{n-1}$ twice.

To estimate the area $\mathcal{H}^{n-1}(\{x: h_a(x) = s\})$, notice that by definition $$\{x: h_a(x) = s\}  \subset  \{x: \min_{z\in C(a)} d_z(x) =|s|\}.$$ 
The geometry of $\Sigma_{{\rm Re}(z)}$ is simple: by Lemma \ref{directions} it consist of $(H_z\cap \alpha_z)\cup K(k)_{n-1}$, where $H_z\cap \alpha_z$ is the transversal intersection (perhaps empty) of a hyperplane with some $n$-cell $\alpha_z \in K(k)_{n}$. It follows that $\{ x: h_a(x)=|s|  \}$ is contained in the union of the sets
$$ \bigcup_{z\in C(a)}\{x \in \alpha_z:d(x, H_z \cap \alpha_z)=|s| \}$$ and $$\{x \in K:  d(x,K(k)_{n-1})=|s|-\min_{z\in C(a)} d(z,f(K))\}.$$ 

Now, $$\mathcal{H}^{n-1}(\cup_{z\in C(a)}\{x:d(x, H_z \cap \alpha_z)=s\} \leq 2p  C_1 3^{-k(n-1)}$$ and $$\mathcal{H}^{n-1} (\{x:  d(x,K(k)_{n-1})=|s|\}) \leq \mathcal{H}^{n-1} (K(k)_{n-1}) \leq C_2 3^k \mathcal{H}^{n-1}(\partial I^n)$$ where $C_1$ is the maximum area of the intersection of a $(n-1)$-plane with the cube $I^n$ and $C_2$ is the number of $n$-cells in $K$. 

Choosing $3^k \leq p^{1/n}\leq 3^{k+1}$, we have for some constant $C=C(M)>0$ $$ \mathcal{H}^{n-1}(\{ x: h_a(x)=s  \}) \leq 2pC_1 3^{-k(n-1)}+ C_2 3^k \mathcal{H}^{n-1}(\partial I^n) \leq C p^{1/n}.$$  

Finally, we have $$E_\e(\hat h_a)  \leq C p^{1/n} \cdot  \int_{-\infty}^{\infty}  \bigg[ \e\frac{\psi_\e'(s)^2}{2} + \frac{W( \psi_\e(s))}{\e}\bigg] ds \leq 2\sigma C p^{1/n}.$$

Therefore, $$c_\e(p) \leq 2\sigma C p^{1/n}.$$

\end{proof}

\section{Lower bound} \label{sec:bounds}

The main theorem of this section is the following sublinear lower bound for the limit min-max values.

\begin{theorem}\label{thm:lower}
Let $M^n$ be a compact Riemannian manifold. There exists a constant $C=C(M)$ such that the min-max values $c_\e(p)$ satisfy: $$C p^{\frac{1}{n}} \leq \liminf_{\e \to 0^+} c_\e(p)$$ for all $p \in \N$.
\end{theorem}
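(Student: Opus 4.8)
### Proof Proposal for Theorem \ref{thm:lower}

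The plan is to reverse the strategy of the upper bound: instead of building cheap sweepouts from slices of a cubical decomposition, I would show that \emph{any} element $A\in\mathcal{F}_p$ must contain a function of large energy, by a volume/covering argument. The key idea, following Gromov and Marques--Neves, is that if all functions in $A$ had energy below $c\,p^{1/n}$ with $c$ small, then the interfaces (roughly, the level sets near $0$) would have small area, and one could cover $M$ by few balls on which the functions are essentially constant ($\pm 1$); this would let one construct an equivariant map from $A$ into a sphere of dimension $<p$, contradicting $\mathrm{Ind}_{\Z/2}(A)\geq p+1$. The bridge between the phase-transition side and the geometric side is the fact, already invoked in the excerpt (Theorem A / \cite{Guaraco}, and the energy--area correspondence $\|V\|=\frac{1}{2\sigma}E_\e$), that low energy forces the function to take values close to $\pm1$ outside a thin neighbourhood of a small-area interface. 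More elementarily, I would use a direct argument: a bound on $E_\e(u)$ together with the coarea formula gives, for $\e$ small, a superlevel set $\{|u|>1-\delta\}$ whose complement has small volume, and a corresponding bound on the perimeter of that superlevel set.

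Concretely, I would proceed as follows. First, fix a small $\delta>0$ and set $U_u^{+}=\{u>1-\delta\}$, $U_u^{-}=\{u<-1+\delta\}$. Using $W\geq 0$ with nondegenerate wells, a bound $E_\e(u)\leq \Lambda$ yields $\mathrm{Vol}(M\setminus(U_u^+\cup U_u^-))\leq C\e\Lambda$ (from the potential term) and, via the coarea formula applied to $u$ on the band $|u|\le 1-\delta$, a bound $\mathrm{Per}(U_u^+)+\mathrm{Per}(U_u^-)\leq C\Lambda$ on the perimeters, with $C=C(M,W,\delta)$ independent of $\e$. Second, I would discretize: cover $M$ by $N\sim (r^{-1})^n$ balls of radius $r$ with controlled overlap, and associate to each $u\in A$ the vector recording, for each ball, whether it is ``mostly $+1$'' or ``mostly $-1$'' — except that to get \emph{continuity and oddness} one should instead use the smoothed assignment $u\mapsto \big(\,\fint_{B_i} u\,\big)_{i=1}^N\in\R^N$, which is continuous and odd. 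Third, the perimeter/volume bounds force this vector to be nonzero and, more importantly, to avoid the ``small'' stratum: if $\Lambda \le c_0 N^{1/n}$ for suitable small $c_0$, then on enough balls the average is close to $\pm1$, so composing with a retraction one obtains a continuous odd map $A\to S^{N-1}$, hence by monotonicity of the index $\mathrm{Ind}_{\Z/2}(A)\leq N$. Choosing $N\sim p$ (i.e. $r\sim p^{-1/n}$) gives a contradiction with $\mathrm{Ind}_{\Z/2}(A)\ge p+1$ unless $\Lambda\geq c_0' p^{1/n}$; taking the infimum over $A\in\mathcal{F}_p$ and then $\liminf_{\e\to0}$ yields the claim, since all constants are $\e$-independent.

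The main obstacle, and the point requiring the most care, is the third step: turning the ``few balls where $u$ is not $\approx\pm1$'' heuristic into a genuine continuous, odd, and \emph{index-lowering} map $A\to S^{N-1}$. The subtlety is that the assignment ``sign of the dominant phase in $B_i$'' is discontinuous across functions that are balanced on $B_i$, and the naive averaging map $u\mapsto(\fint_{B_i}u)_i$ need not avoid the origin for arbitrary $u\in A$ — one must use the a priori energy bound to rule out the bad configurations uniformly. Following \cite{MarquesNevesInfinite}, this is handled by choosing the scale $r$ adapted to $\Lambda$ (a ball of radius $r$ with $r^{n-1}\gg \Lambda$ cannot be half-and-half, since that would cost perimeter $\gtrsim r^{n-1}$; and $r^n\gg \e\Lambda$ forces most of such a ball into one well), together with a careful partition-of-unity / nerve construction so that the resulting map is genuinely equivariant and lands in a sphere of the right dimension. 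A secondary technical point is that $A\subset H^1(M)$ need only be compact, not contained in $\{|u|\le 1\}$; this is dispatched as in the proof of Theorem \ref{thm:minmax}(3) by precomposing with the continuous odd truncation $\tau$, which does not increase energy and does not increase the index.
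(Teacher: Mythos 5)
Your proposal is correct and is essentially the contrapositive of the paper's own argument: the paper packs $p$ disjoint geodesic balls of radius $r_p\sim p^{-1/n}$, uses the Borsuk--Ulam property of $\mathrm{Ind}_{\Z/2}$ (Lemma \ref{lem:lb1}, i.e.\ exactly your statement that a nonvanishing odd averaging map $u\mapsto(\int_{B_i}u)_i$ would force $\mathrm{Ind}_{\Z/2}(A)\leq p$) to produce $u\in A$ with zero mean on every ball, and then shows via De Giorgi's relative isoperimetric inequality --- your ``a ball cannot be half-and-half without paying perimeter $\gtrsim r^{n-1}$'' step, localized to each ball rather than derived from a global coarea bound --- that zero mean costs $E_\e|_{B_i}(u)\geq c\,r_p^{n-1}$, which sums over the $p$ disjoint balls to $c\,p^{1/n}$. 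The truncation step is handled identically, so apart from the by-contradiction framing and the superfluous nerve/retraction discussion, your route and the paper's coincide.
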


The following lemma will be a key ingredient in the proof. Roughly speaking, it asserts that the energy $E_\e$ of a function with zero average in a geodesic ball $B_r(x) \subset M$, is comparable to $r^{n-1}$. In what follows $E_\e|_A (u) = \int_A \e|\nabla u|^2/2 + W(u)/\e,$ for $A\subset M$ measurable.

\begin{lemma}\label{zeromean}
There exist $r_0=r_0(M)>0$ and $c_1=c_1(M,W)>0$ such that $$E_\e|_{B_r}(u) \geq c_1 r^{n-1},$$ whenever $\e \leq r \leq r_0$, $B_r=B_r(x)$ for some $x\in M$, $|u|\leq 1$ and $\int_{B_r} u = 0$.
\end{lemma}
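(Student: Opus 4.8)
The plan is to prove the lemma by a contradiction/compactness argument, rescaling at the scale $\e$ and using a lower bound on the energy that is forced by the zero-average constraint.

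First I would reduce to a normalized statement. Fix $x\in M$ and $\e\le r\le r_0$, and work in geodesic normal coordinates on $B_r(x)$, so that after the rescaling $y\mapsto x+ r y$ the ball $B_r(x)$ becomes (roughly) the Euclidean unit ball $B_1$ with a metric $g_r$ that converges in $C^2$ to the flat metric as $r\to 0$ (this uses $r\le r_0<\mathrm{Inj}(M)$ and compactness of $M$). Under this rescaling, writing $\delta=\e/r\in(0,1]$ and $\tilde u(y)=u(x+ry)$, one computes
\[
E_\e|_{B_r(x)}(u) = r^{n-1}\int_{B_1}\frac{\delta}{2}|\nabla_{g_r}\tilde u|^2 + \frac{W(\tilde u)}{\delta}\, d\mathrm{vol}_{g_r} =: r^{n-1}\,\mathcal{E}_\delta^{g_r}(\tilde u),
\]
so it suffices to show $\mathcal{E}_\delta^{g_r}(\tilde u)\ge c_1>0$ uniformly over $\delta\in(0,1]$, over metrics $g_r$ close to flat, and over $\tilde u$ with $|\tilde u|\le 1$ and $\int_{B_1}\tilde u\, d\mathrm{vol}_{g_r}=0$.

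Next I would argue by contradiction: suppose there are sequences $r_k\to 0$ (hence $g_{r_k}\to g_{\mathrm{eucl}}$ in $C^2$), $\delta_k\in(0,1]$, and $u_k$ with $|u_k|\le 1$, $\int u_k=0$, and $\mathcal{E}_{\delta_k}^{g_{r_k}}(u_k)\to 0$. There are two cases. If $\delta_k$ stays bounded below, say $\delta_k\ge \delta_0>0$ along a subsequence, then $\mathcal{E}_{\delta_k}$ controls the full Dirichlet energy and $\int W(u_k)\to 0$; by Rellich a subsequence of $u_k$ converges in $L^2$ to some $u_\infty$ with $\int_{B_1}W(u_\infty)=0$, so $u_\infty\equiv +1$ or $u_\infty\equiv -1$ a.e.\ on $B_1$ (using that $u_k$ is close to constant in $H^1$ in this regime, so it cannot split), contradicting $\int u_\infty = 0$. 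If instead $\delta_k\to 0$, this is exactly the regime of the Modica–Sternberg $\Gamma$-convergence / compactness theory: $\mathcal{E}_{\delta_k}(u_k)\to 0$ forces (after passing to a subsequence) $u_k\to u_\infty$ in $L^1(B_1)$ with $u_\infty\in BV(B_1,\{\pm1\})$ and $\mathrm{Per}(\{u_\infty=1\};B_1)=0$ by lower semicontinuity of the Modica–Mortola functional; hence again $u_\infty$ is a.e.\ constant $\equiv\pm 1$, contradicting $\int u_\infty = 0$. Since $L^1$ convergence preserves the zero-average constraint in both cases, this gives the contradiction and establishes the uniform bound $c_1$.

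The main obstacle, and the part requiring the most care, is making the dichotomy uniform: one needs that the lower bound $c_1$ does not degenerate either as $\delta\to 0$ or as $\delta\to 1$, and that the two regimes glue (e.g.\ the constant produced by the $\Gamma$-convergence argument and the constant from the ``elliptic'' regime $\delta\ge\delta_0$ must both be positive, and one should really run a single compactness argument allowing $\delta_k$ to converge to any value in $[0,1]$, treating the limiting functional accordingly — for $\delta_k\to\delta_\infty\in(0,1]$ the limit is the fixed functional $\mathcal{E}_{\delta_\infty}^{g_{\mathrm{eucl}}}$ whose only zeros are the constants $\pm1$, which are excluded by zero average). A secondary technical point is controlling the metric dependence: since $g_{r_k}\to g_{\mathrm{eucl}}$ in $C^2$, the functionals $\mathcal{E}^{g_{r_k}}$ are uniformly comparable to the Euclidean ones and the $\Gamma$-convergence / compactness statements are stable under this perturbation, so passing $g_{r_k}$ to the flat limit introduces no difficulty. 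Everything else — the coarea/rescaling identities, Rellich, lower semicontinuity — is standard.
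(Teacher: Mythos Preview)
Your argument is essentially correct but takes a genuinely different route from the paper. One small oversight: in setting up the contradiction you assume $r_k\to 0$, but negating the conclusion only gives $r_k\in(0,r_0]$; you should also extract subsequences so that $r_k\to r_\infty\in[0,r_0]$ and the centers $x_k\to x_\infty\in M$, and note that when $r_\infty>0$ the limit metric is just the rescaled metric at $x_\infty$ rather than the Euclidean one. This changes nothing in the argument.

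The paper proceeds by a direct, quantitative estimate rather than compactness. It fixes $a\in(0,1)$, sets $A_\pm=\{\pm u\ge a\}\cap B_r$ and $A_0=\{|u|<a\}\cap B_r$, and combines two ingredients: (i) a De Giorgi--type isoperimetric lemma bounding $|A_-|\cdot|A_+|^{1-1/n}$ by $r^n\int_{A_0}|\nabla u|$, and (ii) the elementary bounds $|A_\pm|\ge \tfrac{a}{2c}r^n - W(a)^{-1}\e\,E_\e|_{B_r}(u)$ coming from $\int_{B_r}u=0$ and $W(a)|A_0|\le \e E_\e|_{B_r}(u)$. After Cauchy--Schwarz on $\int_{A_0}|\nabla u|$ and using $\e\le r$, this yields an explicit inequality of the form $|A-Bs|^{2-1/n}\le Cs$ with $s=E_\e|_{B_r}(u)/r^{n-1}$, which forces $s\ge c_1(A,B,C)>0$.

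The trade-off is clear: your rescaling-plus-compactness argument is conceptually transparent and leverages standard Modica--Mortola $\Gamma$-convergence, but the constant $c_1$ is non-constructive and you import the $\Gamma$-compactness theorem as a black box. The paper's argument is more hands-on and self-contained (modulo the De Giorgi lemma, which is elementary), and produces an explicit $c_1$ in terms of $W$ and the geometry of $M$; on the other hand it requires finding the right algebraic combination of the two inequalities.
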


Let us show first how Lemma \ref{zeromean} implies Theorem \ref{thm:lower} and postpone its proof to the end of the section.

\begin{proof}[Proof of Theorem \ref{thm:lower}]

Recall the following fact: there is a positive constant $\nu = \nu(M)>0$ such that, for all $p \in \N$, we can find $p$ disjoint closed geodesic balls $B_1,\ldots, B_p \subset M$ of radius $r_p = \nu p^{-\frac{1}{n}}$ (compare with \cite[\S 3]{Guth}). 
Here $B_i=B_i(x_i)$ for some $x_i\in M$ and we assume $\nu$ is smaller than the $r_0$ given by Lemma \ref{zeromean}, in particular $r_p\leq r_0$. 

The following version of the Borsuk-Ulam property for the cohomological index, implies that for every $A \in \mathcal{F}_p$, there exists $u\in A$ such that $\int_{B_i} u =0$, for every $i=1,\dots,p$.

\begin{lemma} \label{lem:lb1}
Given a paracompact $\Z/2$-space $A$ with $\ind_{\Z/2}(A)\geq p+1$, every continuous equivariant function $f:A \to \R^p$ has a zero, i.e., $f^{-1}(0) \neq \emptyset$.
\end{lemma}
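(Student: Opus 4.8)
The statement to prove is Lemma \ref{lem:lb1}: a Borsuk--Ulam type property for the Fadell--Rabinowitz cohomological index, namely that any continuous equivariant map $f:A\to\R^p$ from a paracompact free $\Z/2$-space $A$ with $\ind_{\Z/2}(A)\ge p+1$ must vanish somewhere.

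\medskip

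\textbf{Plan of proof.} The argument is by contradiction. Suppose $f:A\to\R^p$ is continuous, equivariant (with $\R^p$ carrying the antipodal action $y\mapsto -y$), and never zero. Then $x\mapsto f(x)/|f(x)|$ defines a continuous equivariant map $g:A\to S^{p-1}$, where $S^{p-1}\subset\R^p$ carries the antipodal action. Composing with the inclusion $S^{p-1}\hookrightarrow S^\infty$, which is clearly equivariant, we obtain a continuous equivariant map $A\to S^{p-1}\to S^\infty$. Passing to orbit spaces, this gives a continuous map $\tilde g:\tilde A\to \R\p^{p-1}\hookrightarrow\R\p^\infty$ that factors the classifying map of the double cover $A\to\tilde A$ through $\R\p^{p-1}$.

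\medskip

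The key cohomological input is the structure of $H^*(\R\p^\infty;\Z/2)\cong \Z/2[w]$ and $H^*(\R\p^{p-1};\Z/2)\cong\Z/2[w]/(w^p)$, together with the fact that the inclusion $\iota:\R\p^{p-1}\hookrightarrow\R\p^\infty$ induces the obvious ring surjection, so in particular $\iota^*(w^{p-1})\ne 0$ but $\iota^*(w^p)=0$ in $H^*(\R\p^{p-1};\Z/2)$ (indeed $H^{p}(\R\p^{p-1};\Z/2)=0$). Let $\tilde f:\tilde A\to\R\p^\infty$ be the map used in the definition of $\ind_{\Z/2}(A)$; since $\tilde g$ also classifies the same double cover $A\to\tilde A$ up to the choice of equivariant map into $S^\infty$, and such maps are unique up to equivariant homotopy, $\tilde f$ and $\iota\circ(\text{orbit map of }g)$ are homotopic. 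Hence $\tilde f^*(w^{p}) = (\text{orbit map of }g)^*\circ\iota^*(w^p) = (\text{orbit map of }g)^*(0)=0$ in $H^{p}(\tilde A;\Z/2)$. By the definition of the cohomological index this forces $\ind_{\Z/2}(A)\le p$, contradicting the hypothesis $\ind_{\Z/2}(A)\ge p+1$. Therefore $f$ must have a zero.

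\medskip

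\textbf{Main obstacle.} The only delicate point is the homotopy-invariance step: one must invoke that an equivariant map from a paracompact free $\Z/2$-space into $S^\infty$ exists and is unique up to equivariant homotopy (this is exactly the classifying property of $S^\infty=E\Z/2$ for the universal principal $\Z/2$-bundle, valid because $S^\infty$ is contractible and the action is free and, for paracompact base, the bundle $A\to\tilde A$ is numerable). This legitimizes that $\tilde f^*(w^{k})$ depends only on $A$ and not on the chosen equivariant map, and in particular lets us compute it using the factored map $g$ through $S^{p-1}$. Once this is granted, the rest is a routine computation with the cohomology ring of projective spaces. Alternatively, and perhaps more cleanly, one can argue directly from property (I2) (monotonicity) of the index applied to the equivariant map $A\to S^{p-1}$, giving $\ind_{\Z/2}(A)\le\ind_{\Z/2}(S^{p-1})=p$, using the known value $\ind_{\Z/2}(S^{p-1})=p$ established in the appendix; this bypasses the homotopy argument entirely and reduces everything to the (standard) computation of the index of spheres.
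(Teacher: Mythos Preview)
Your proof is correct and follows essentially the same approach as the paper: argue by contradiction, normalize $f$ to obtain a continuous equivariant map $A\to S^{p-1}$, and then invoke monotonicity (I2) together with $\ind_{\Z/2}(S^{p-1})=p$ to get the contradiction $p+1\le\ind_{\Z/2}(A)\le p$. Your ``alternative'' at the end is in fact exactly the paper's argument; the longer version you give first simply unpacks what monotonicity and the computation of $\ind_{\Z/2}(S^{p-1})$ amount to.
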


\begin{proof}
Suppose, by contradiction, that we can find $f:A \to \R^p$ continuous, equivariant and such that $f^{-1}(0)=\emptyset$. Define $\varphi: A \to S^{p-1}$ by $\varphi(x) = \frac{x}{||x||}$, for ${x \in A}$. Then $\varphi$ is a continuous equivariant map, and thus
	\[p+1\leq \ind_{\Z/2}(A) \leq \ind_{\Z/2}(S^{p-1}) = p,\]
which is a contradiction.
\end{proof}

\begin{remark} The previous lemma holds more generally for any topological $\Z/2$-index, for it holds $\ind_{\Z/2}(S^p) \leq p+1$ for every such index. See the Appendix \ref{ap:index} for more details.
\end{remark}

By replacing $A$ with $ \tau(A)$ if necessary (where $\tau:H^1(M) \to H^1(M)$ is the truncation map defined in the proof of \ref{thm:minmax} ) we might assume that $|u|\leq 1$, for every $u\in A$, since $\tau (A) \in \mathcal{F}_p$ and $\sup E_\e(A) \geq \sup E_\e(\tau(A))$.

Now choose $\e$ so that $0<\e \leq r_p \leq r_0$. Lemma \ref{lem:lb1} implies the existence of $u\in A$ with $\int_{B_i} u =0$. Finally, Lemma \ref{zeromean} implies 
	\[E_\e(u) \geq \sum_{i=1}^{p} E_\e|_{B_i}(u) \geq c_1pr_p^{n-1} = c_1\nu^{n-1}p^{1/n}.\]
Hence, we have $\max_{u \in A} E_\e(u) \geq C p^{1/n}$, for every $A \in \mathcal{F}_p$ and $\e \in (0,r_p)$, where $C=c_1\nu^{n-1}$. In particular, for every $\e \in (0,r_p)$ we have $$C p^{1/n} \leq c_\e(p).$$ This proves Theorem \ref{thm:sublinear}.

\end{proof}

\begin{proof}[Proof of Lemma \ref{zeromean}]

By the compactness of $M$ and a comparison argument, we may assume that $r_0=r_0(M)$ is such that we can find a constant $c=c(M)>1$ such that
	\begin{equation}\label{ball} \frac{1}{c} r^n \leq \hm^n(B_r(x)) \leq cr^n, \quad \mbox{for all} \quad x \in M, \ r \in (0,r_0).\end{equation}
Denote $|A|=\hm^n(A)$ for $A \subset M$ and, for a fixed $a \in (0,1)$, $$\begin{array}{c} A_{+} = \{ x \in B_r : a \leq u \}  \\ A_0 = \{ x \in B_r : -a < u < a \} \\ A_{-} = \{ x \in B_r : u \leq - a \}   \end{array}.$$

The lemma is a consequence of the interplay between two inequalities. The first one is an isoperimetric inequality due to De Giorgi (see \cite[Lemma 1.4]{CaffarelliVasseur}). Roughly speaking, it states that functions in $H^1(M)$ cannot have jump singularities. 
\begin{lemma} \label{lem:dg-ipi}
There exist $c_0=c_0(M)>0$ and $r_0=r_0(M)>0$ such that for every $u \in H^1(M)$, for all real numbers $a_0<b_0$, for every $r \in (0,r_0)$, and for all $x \in M$, we have \begin{equation}\label{degiorgi} \left|\{u\leq a_0\} \cap B_r(x)\right|\cdot \left|\{u\geq b_0\} \cap B_r(x)\right|^{1-\frac{1}{n}} \leq \frac{c_0 r^n}{b_0-a_0} \int_{\{a_0<u<b_0\} \cap B_r(x)} |\nabla u|.\end{equation}
\end{lemma}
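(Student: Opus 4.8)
The plan is to reduce the statement to a scale–invariant Euclidean inequality on the unit ball and then run a De Giorgi–type truncation argument based on the endpoint Sobolev--Poincar\'e inequality. First I would make three reductions. (a) Replacing $u$ by the truncation $\bar u=\min\{b_0,\max\{a_0,u\}\}\in H^1(M)$ changes none of the relevant quantities: $\{\bar u\le a_0\}=\{u\le a_0\}$, $\{\bar u\ge b_0\}=\{u\ge b_0\}$, and $|\nabla\bar u|=|\nabla u|\,\mathbf{1}_{\{a_0<u<b_0\}}$ a.e.; then the affine change of value $u\mapsto (u-a_0)/(b_0-a_0)$ normalizes to $a_0=0$, $b_0=1$, $0\le u\le 1$, and accounts for the factor $1/(b_0-a_0)$ on the right. (b) Fix $r_0<\mathrm{Inj}(M)$; shrinking it using compactness of $M$, the exponential chart $\exp_x\colon B_r(0)\subset T_xM\to B_r(x)$ is a diffeomorphism under which the metric is uniformly (over $x\in M$) bi-Lipschitz to the Euclidean one, so geodesic balls, the measures $|\cdot|=\hm^n(\cdot)$, the integrals $\int|\nabla u|$, and the constant in the Sobolev--Poincar\'e inequality on $B_r(x)$ are all comparable, with $M$-dependent constants, to their Euclidean analogues on the round ball $B_r\subset\R^n$. (c) The Euclidean endpoint inequality $\|v-v_{B_r}\|_{L^{n/(n-1)}(B_r)}\le C_n\|\nabla v\|_{L^1(B_r)}$ (valid for $n\ge 2$ on a ball) is scale invariant, reducing the whole problem to the unit ball.

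It then remains to show: if $0\le u\le 1$ on $B=B_1(0)\subset\R^n$, $A=\{u=0\}\cap B$ and $Z=\{u=1\}\cap B$, then $|A|\,|Z|^{1-1/n}\le c_n\,\|\nabla u\|_{L^1(B)}$. Here I would use the endpoint Sobolev--Poincar\'e inequality on $B$ (via the Riesz potential bound $|u(x)-u_B|\le c\int_B|\nabla u(y)|\,|x-y|^{1-n}\,dy$ followed by the weak-type estimate for the Riesz kernel of order one, or simply citing the standard result). Set $m=u_B\in[0,1]$. If $m\le\tfrac12$ then $|u-m|\ge\tfrac12$ on $Z$, so $(\tfrac12)^{n/(n-1)}|Z|\le\int_Z|u-m|^{n/(n-1)}\le(C_n\|\nabla u\|_{L^1})^{n/(n-1)}$, i.e.\ $|Z|^{(n-1)/n}\le 2C_n\|\nabla u\|_{L^1}$, and since $|A|\le|B|=\omega_n$ the claim follows. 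If $m>\tfrac12$, apply the same bound to $1-u$ to get $|A|^{(n-1)/n}\le 2C_n\|\nabla u\|_{L^1}$; then either $\|\nabla u\|_{L^1}\le(2C_n)^{-1}$, in which case $|A|\le(2C_n\|\nabla u\|_{L^1})^{n/(n-1)}\le 2C_n\|\nabla u\|_{L^1}$ because the exponent is $\ge 1$ and the base is $\le 1$, or $\|\nabla u\|_{L^1}>(2C_n)^{-1}$, in which case $|A|\,|Z|^{1-1/n}\le\omega_n^{\,2-1/n}\le 2C_n\omega_n^{\,2-1/n}\|\nabla u\|_{L^1}$ trivially. In all cases $|A|\,|Z|^{1-1/n}\le c_n\|\nabla u\|_{L^1(B)}$ with $c_n$ depending only on $n$.

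Finally I would unwind the reductions. The unit-ball inequality scales to $|A_r|\,|Z_r|^{1-1/n}\le c_n\,r^n\int_{B_r}|\nabla u|$ on round balls (the powers match: $|A_r|,|Z_r|$ carry $r^n$, $\int|\nabla u|$ carries $r^{n-1}$, and $n+n(1-1/n)-(n-1)=n$), transfer to geodesic balls $B_r(x)\subset M$ absorbing the bi-Lipschitz distortion into $c_0=c_0(M)$, and undo the value normalization to recover the factor $1/(b_0-a_0)$. The main obstacle is reduction (b): one must ensure $r_0$ and all comparison constants --- most importantly the constant in the endpoint Sobolev--Poincar\'e inequality on $B_r(x)$ --- can be chosen uniformly over $x\in M$, which uses $r_0<\mathrm{Inj}(M)$, compactness of $M$ to control the metric in normal coordinates, and the fact that a bi-Lipschitz image of a ball still supports the endpoint Sobolev--Poincar\'e inequality with a controlled constant. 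A shortcut is to invoke the Euclidean statement directly from \cite[Lemma 1.4]{CaffarelliVasseur} and carry out only this transfer.
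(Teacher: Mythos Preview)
The paper does not actually prove this lemma: it simply attributes the inequality to De Giorgi and cites \cite[Lemma 1.4]{CaffarelliVasseur}, leaving the transfer from Euclidean balls to geodesic balls on $M$ implicit. Your proposal is therefore more detailed than the paper's own treatment, and the argument you outline is correct. The reductions (truncation and affine renormalization in the value variable, bi-Lipschitz comparison with the Euclidean ball via normal coordinates, and scale invariance) are standard and valid; the core step on the unit ball via the endpoint Sobolev--Poincar\'e inequality with a case split on the mean $u_B$ is exactly the mechanism behind De Giorgi's original argument, and your bookkeeping of the asymmetric exponents on $|A|$ and $|Z|$ in the second case is handled cleanly. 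The scaling check $n+n(1-1/n)-(n-1)=n$ is right, producing the factor $r^n$ on the right-hand side. Your closing remark already identifies the shortcut the paper takes: cite the Euclidean version and do only the bi-Lipschitz transfer. So your route and the paper's are aligned in spirit; you simply fill in what the paper leaves to the reference.
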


The second inequality (which proof we omit since it is obtained as in \cite[\S 6]{Guaraco}) is \begin{equation}\label{ineq2}|A_{\pm}| \geq \frac{a}{2}|B_r| - W(a)^{-1} \e E_\e|_{B_r}(u) \geq  \frac{a}{2c}r^{n} - W^{-1}(a) \e E_\e|_{B_r}(u).\end{equation}
It is a consequence of $ W(a)|A_0| \leq \e E_\e|_{B_r}(u)$ (which follows directly from the definitions of $W$ and $E_\e$), the fact that $|u|\leq 1$ with $\int_{B_r} u =0$ and \eqref{ball}. Notice that we can assume $ \frac{a}{2c}r^{n} - W^{-1}(a) \e E_\e|_{B_r}(u) \geq 0.$ Otherwise $\e<r$ and (\ref{ball}) would imply $E_\e|_{B_r}(u) > aW(a)(2c)^{-1}\cdot r^{n-1}$, which is what we want to prove.

Combining (\ref{degiorgi}) and Lemma \ref{lem:dg-ipi} (with $-a_0=b_0=a$) we obtain $$\bigg|\frac{a}{2c}r^n -W(a)^{-1} \e E_\e|_{B_r}(u)\bigg|^{2-1/n} \leq \frac{c_0}{2a} r^n \int_{A_0}|\nabla u|.$$ Moreover, \begin{align} \label{eq:lb2}
		\int_{A_0} |\nabla u| & \leq |A_0|^{1/2} \left( \int_{B_r}|\nabla u|^2 \right)^{1/2} \nonumber\\
		& \leq  \bigg(W(a)^{-1}\e E_\e|_{B_r}(u)\bigg)^{1/2}   \left( \frac{2}{\e} E_\e|_{B_r}(u)\right)^{1/2}\\
		& \leq \left( \frac{2}{W(a)} \right)^{1/2} E_\e|_{B_r(x)}(u). \nonumber
	\end{align}
Now (\ref{eq:lb2}) and (\ref{ball}) imply $$\bigg|\frac{a}{2c}r^n -W(a)^{-1} \e E_\e|_{B_r}(u)\bigg|^{2-1/n} \leq c_2 r^n E_\e|_{B_r}(u),$$ where $c_2=c_0(a\sqrt{2W(a)})^{-1}$ and since $\e/r\leq 1$ by hypothesis, we conclude $$\bigg|\frac{a}{2c} -W(a)^{-1} \frac{E_\e|_{B_r}(u)}{r^{n-1}}\bigg|^{2-1/n} \leq c_2 \frac{E_\e|_{B_r}(u)}{r^{n-1}}.$$

This inequality is of the form $|A- B s|^{2-1/n}\leq C s$, where $s=E_\e|_{B_r}(u)/r^{n-1}$ and $A,B$ and $C$ are positive constants depending only on $M$ and $W$. Since it does not hold true when $s=0$, it implies that $s$ cannot be arbitrarily small. In particular, there exists $c_1=c_1(A,B,C)>0$ such that $s\geq c_1$, and the lemma follows.

\end{proof}

\section{Comparison with Marques-Neves \texorpdfstring{$p$}{p}-widths} \label{sec:comp}
In this section, we show that the min-max values for the energy functional and cohomological families are bounded below, as $\e \to 0^+$, by the corresponding $p$-widths $\omega_p(M)$ of Almgren-Pitts min-max theory, defined in terms of high-parameter families of sweepouts in \cite{MarquesNevesInfinite}. More precisely we prove

\begin{theorem} \label{thm:wplp}
For every $p \in \N$, it holds
	\[\omega_p(M) \leq \frac{1}{2\sigma}\liminf_{\e \to 0^+} c_\e(p).\]
\end{theorem}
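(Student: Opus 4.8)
The plan is to compare the min-max solutions of the Allen--Cahn energy, whose limit-interface is controlled by Theorem A, with the Almgren--Pitts $p$-sweepouts, and show that the former produce competitors for the latter's min-max problem. The heuristic is the standard phase-transition/area correspondence: a family $\hat h_a = \psi_\e \circ h_a$ of low-energy functions parametrized by $S^p$ should, after passing to the level sets $\{\hat h_a = 0\}$ (or more precisely after sending $u \mapsto \llbracket \{u > 0\}\rrbracket \in \mathcal{Z}_{n-1}(M,\Z/2)$), yield a $p$-sweepout in the sense of Marques--Neves whose sup of areas is essentially $\tfrac{1}{2\sigma}\sup E_\e$. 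Taking infima over families and then $\liminf_{\e\to 0}$ would give $\omega_p(M) \le \tfrac{1}{2\sigma}\liminf_{\e\to 0} c_\e(p)$.

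First I would fix $\e > 0$ and a nearly optimal $A \in \mathcal{F}_p$ for $c_\e(p)$, which after truncation we may take with $|u|\le 1$ for all $u \in A$. The key construction is a map $\Psi_\e : A \to \mathcal{Z}_{n-1}(M,\Z/2)$; the natural candidate is $\Psi_\e(u) = \partial \llbracket \{u > 0\}\rrbracket$, using that $\{u > 0\}$ is a set of finite perimeter for $u\in H^1(M)$. One must check: (i) $\Psi_\e$ is continuous in the flat topology — this follows because $u \mapsto \llbracket \{u>0\}\rrbracket$ is continuous from $H^1(M)$ (with the caveat that one might need to perturb the threshold $0$ to a generic value, or restrict to a dense set and use an approximation/continuity argument, since $\{u=0\}$ could have positive measure); (ii) $\Psi_\e$ detects the cohomology class, i.e. $\Psi_\e^* \bar\lambda \ne 0$ where $\bar\lambda \in H^1(\mathcal{Z}_{n-1}(M,\Z/2);\Z/2)$ is the Almgren--Marques--Neves generator — this should come from the fact that $\Psi_\e$ is odd (since $W$ is even, $\{-u > 0\} = \{u < 0\}$ is the complement of $\{u>0\}$ up to a null set, so $\Psi_\e(-u) = \Psi_\e(u)$ as mod-$2$ currents, which is exactly what makes $\Psi_\e$ factor through $A/\Z_2$ and pull back $\bar\lambda$ nontrivially given $\ind_{\Z/2}(A)\ge p+1$, via the identification of $\bar\lambda$ with the generator $w$ of $H^1(\mathbb{RP}^\infty)$ — this is the Almgren isomorphism argument used in \cite{MarquesNevesInfinite}). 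Hence $\Psi_\e(A)$, suitably discretized, is a $p$-sweepout. (iii) The mass control: $\mathbf{M}(\Psi_\e(u)) = \mathcal{H}^{n-1}(\partial^*\{u>0\}) \le \tfrac{1}{2\sigma} E_\e(u) + o(1)$ as $\e \to 0$, by the standard Modica--Mortola / lower-density estimate — more carefully one uses that $\int_M \sqrt{2W(u)}|\nabla u| \le E_\e(u)$ and a coarea/BV-coarea argument to bound the perimeter of one superlevel set by $\tfrac{1}{2\sigma}E_\e(u)$ up to a factor tending to $1$; alternatively, a cleaner route is to invoke Theorem A-type varifold convergence to get that a nearly-optimal sequence's interface has area $\le \tfrac{1}{2\sigma}\liminf c_\e(p)$, but for the sweepout comparison one really wants a per-slice inequality, so the BV coarea bound is the thing to push through.

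Granting (i)--(iii), for each $\e$ and each $A\in\mathcal{F}_p$ near-optimal we obtain a $p$-sweepout $\Phi^A_\e$ (after the discretization procedure of Marques--Neves turning a continuous map into an Almgren--Pitts admissible one, which only costs an arbitrarily small amount of mass, as in \cite{MarquesNevesInfinite}) with
\[
\omega_p(M) \le \sup_{u\in A} \mathbf{M}(\Psi_\e(u)) + \delta_\e \le \frac{1}{2\sigma}\sup_{u\in A} E_\e(u) + o_\e(1),
\]
where $o_\e(1)\to 0$ as $\e \to 0$. Taking the infimum over $A\in\mathcal{F}_p$ gives $\omega_p(M)\le \tfrac{1}{2\sigma}c_\e(p) + o_\e(1)$, and then $\liminf_{\e\to 0}$ yields the theorem.

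The main obstacle I expect is (iii) in its per-slice form: proving $\mathbf{M}(\partial\llbracket\{u>0\}\rrbracket) \le \tfrac{1}{2\sigma}E_\e(u)(1+o(1))$ \emph{uniformly over all $u$ with $|u|\le 1$}, not just along a minimizing sequence. A single superlevel set $\{u > t\}$ satisfies $\int_0^\infty \mathbf{M}(\partial\llbracket\{u>t\}\rrbracket)\,dt$-type coarea bounds only on average, so one may need to choose the threshold depending on $u$ (e.g. pick $t = t(u)$ minimizing the slice perimeter over a small interval, then absorb the continuity issue by showing this can be done consistently, or by composing with a fixed profile $\psi_\e$ so that the energy is already concentrated near one level — indeed working with $\hat h_a = \psi_\e \circ h_a$ rather than arbitrary $u$ makes this transparent, which is why restricting attention to the \emph{explicitly constructed} near-optimal families from Section~\ref{sec:upperbound} is the safe path: there the level set is a single hypersurface $\{h_a = 0\}$ with $|\nabla h_a|=1$, the coarea computation is exact, and $\mathbf{M}$ of the interface is literally $\mathcal{H}^{n-1}(\{h_a=0\})$, bounded by the same slice-area estimates already proven in Theorem~\ref{upperbound}). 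The second, more technical obstacle is making the odd map $A \to \mathcal{Z}_{n-1}(M,\Z/2)$ genuinely \emph{detect} $\omega_p$: one must verify the pullback of the fundamental cohomology class is nonzero, which requires identifying the map $A/\Z_2 \to \mathcal{Z}_{n-1}(M,\Z/2)/\!\!\sim\ \simeq \mathbb{RP}^\infty$ with the classifying map of the double cover and invoking $\ind_{\Z/2}(A) \ge p+1$ — this is exactly the Almgren isomorphism plus the definition of the cohomological index, so it should go through, but the continuity of $u\mapsto\llbracket\{u>0\}\rrbracket$ at functions with $|\{u=0\}|>0$ needs the truncation-and-approximation care noted above.
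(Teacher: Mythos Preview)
Your overall strategy matches the paper's: take a near-optimal family for $c_\e(p)$, send each function to a superlevel-set boundary in $\znm$, control the mass by the energy via a BV/coarea argument, and check that the resulting map is a $p$-sweepout by exploiting the evenness and the cohomological index. You also correctly diagnose the central difficulty, namely that the per-slice bound $\M(\partial\llbracket\{u>0\}\rrbracket)\le \tfrac{1}{2\sigma}E_\e(u)$ fails at a fixed threshold and one must instead choose $s=s(u)$ by averaging.

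There is, however, a genuine gap at the point where you wave at ``the discretization procedure of Marques--Neves \ldots\ which only costs an arbitrarily small amount of mass.'' Once the threshold depends on $u$, the map $u\mapsto \partial\llbracket\{u>s(u)\}\rrbracket$ is \emph{not} continuous even in the flat topology, so there is no continuous map to discretize. The paper resolves this by first reducing to a $p$-dimensional cubical complex $X$ with an odd map $h:X\to H^1(M)\setminus\{0\}$ (Lemma~\ref{lema_pdim}), then defining a \emph{discrete} map $\phi_0$ on the vertices of a fine subdivision $X(j)_0$ by $\phi_0(x)=\partial\llbracket\{h_x>s_x\}\rrbracket$ with $s_x$ chosen by the averaging argument. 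Flat fineness of $\phi_0$ is obtained not from continuity of $\phi_0$ itself but by comparing each $\phi_0(x)$ to the auxiliary current $\partial\llbracket\{h_x>\alpha\}\rrbracket$ at a \emph{fixed} level $\alpha$ (Lemma~\ref{lema:nivel}): the fixed-level currents vary finely because nearby $H^1$ functions have nearby superlevel sets in measure, and each $\phi_0(x)$ is flat-close to the fixed-level current because the region $\{|h_x|\le 1-\delta\}$ has small measure controlled by $\e E_\e(h_x)$. One then needs a substantial interpolation step (Theorem~\ref{itp}, from Zhou) to pass from flat fineness to mass fineness before applying the Almgren extension. None of this is routine bookkeeping; it is the technical core of the proof.

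Two further points. First, your fallback suggestion---restrict to the explicit families $\hat h_a$ of Section~\ref{sec:upperbound}---does not work: those families witness the \emph{upper} bound $c_\e(p)\le 2\sigma Cp^{1/n}$ and are not near-optimal for $c_\e(p)$ in general, so they produce no competitor for $\omega_p(M)$ with mass close to $\tfrac{1}{2\sigma}c_\e(p)$. Second, evenness of $\Phi$ alone does not make $\tilde\Phi:\tilde X\to\znm$ a $p$-sweepout; one must verify that $\ker\tilde\Phi_*=p_*\pi_1(X)$ (Proposition~\ref{prop:pswo}). In the paper this is checked by tracking the isoperimetric choices along $1$-cells and showing that a discrete loop in $X$ maps to a null-homotopic loop in $\znm$, while a discrete path joining antipodal vertices maps to a sweepout because the sum of isoperimetric choices equals $\llbracket M\rrbracket$ (Remark~\ref{loop}). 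Your sketch of the cohomology detection is in the right spirit but skips this verification.
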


As we will see later, the $p$-widths, $\omega_p(M)$, are defined in \cite{MarquesNevesInfinite} in terms of maps $\Phi: X \to\znm$ where $X$ is a cubical complex and $\znm$ is the space of mod 2 integral $(n-1)$-cycles in $M$ with zero boundary. However, the min-max values $c_\e(p)$ are defined in terms of the elements of $\mathcal{F}_p$ which can be very different from continuous images of cubical complexes. Because of this, in order to prove Theorem \ref{thm:wplp} we will first approximate a set $A \in \mathcal{F}_p$ which is almost optimal (in the sense that its energy is close to $c_\e(p)$) by the image of an odd map $h$ from a $p$-dimensional cubical complex into $H^1(M)$. 

In what follows, we will use the notation for cubical complexes discussed in the \nameref{notation} section of the Introduction.

\subsection{Cubical subcomplexes and min-max values} Initially, we need to show that the min-max value $c_\e(p)$ can be obtained by restricting ourselves to sets which are the image of certain $p$-dimensional subcomplexes of $Q(m,k)$ by odd maps into $H^1(M)$.

Fix $p \in \N$ and denote by $\mathcal{C}_p$ the family of all $X$ that are $p$-dimensional symmetric cubical subcomplexes of $Q(m,k)$, for some $m,k \in \N$, with $\ind_{\Z/2}(X)\geq p+1$. For every such $X$, we consider also the family $\Gamma(X)$ of all continuous odd maps $h:X \to H^1(M)/\{0\}$ and its associated min-max values
	\[c_\e(X) = \inf_{h \in \Gamma(X)} \sup_{h(X)} E_\e.\]
By the monotonicity property of the index, we have $h(X) \in \mathcal{F}_p$ for all $h \in \Gamma(X)$, thus $c_\e(p)\leq c_\e(X)$. Moreover, we have (compare with \cite[Lemma 4.7]{MarquesNevesInfinite} and \cite[\S 1.5]{MarquesNevesIndex})

\begin{lemma} \label{lema_pdim}
For all $p \in \N$, it holds
	\[c_\e(p) = \inf_{X \in \mathcal{C}_p} c_\e(X).\]
\end{lemma}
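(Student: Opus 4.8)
The plan is to prove the two inequalities $c_\e(p) \le \inf_{X \in \mathcal{C}_p} c_\e(X)$ and $c_\e(p) \ge \inf_{X \in \mathcal{C}_p} c_\e(X)$ separately. The first is immediate: for every $X \in \mathcal{C}_p$ and every $h \in \Gamma(X)$, the monotonicity property (I2) of the cohomological index gives $\ind_{\Z/2}(h(X)) \ge \ind_{\Z/2}(X) \ge p+1$, so $h(X) \in \mathcal{F}_p$ and hence $\sup_{h(X)} E_\e \ge c_\e(p)$; taking infima over $h$ and then over $X$ yields $\inf_{X} c_\e(X) \ge c_\e(p)$.

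For the reverse inequality, the strategy is: given $\delta > 0$, pick $A \in \mathcal{F}_p$ with $\sup_A E_\e < c_\e(p) + \delta$, and approximate $A$ by the image of an odd map from some $X \in \mathcal{C}_p$ whose $E_\e$-sup is no larger (up to another small error). First I would use that $A$ is a compact symmetric subset of the Hilbert space $H^1(M)$ with $\ind_{\Z/2}(A) \ge p+1$. By the continuity property (I3), there is an invariant open neighborhood $U \supset A$ with $\ind_{\Z/2}(\overline{U}) = \ind_{\Z/2}(A) \ge p+1$, and by continuity of $E_\e$ and compactness of $A$ we may shrink $U$ so that $\sup_{\overline{U}} E_\e < c_\e(p) + \delta$. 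Now I would take a finite-dimensional symmetric polyhedral (or smooth) approximation: since $A$ is compact in a Hilbert space, cover it by finitely many balls, pass to the span of a finite set of points that is $\Z/2$-invariant, and use a symmetric partition of unity / the symmetric nerve to produce a finite symmetric simplicial complex $P$ together with an equivariant map $P \to \overline{U} \subset H^1(M)$ and an equivariant map $A \to P$ (the nerve map) such that the composition $A \to P \to H^1(M)$ is equivariantly homotopic to the inclusion and stays inside $\overline{U}$. Then $\ind_{\Z/2}(P) \ge \ind_{\Z/2}(A) \ge p+1$ by monotonicity (I2) applied to $A \to P$, and the image of $P \to H^1(M)$ lands in $\overline{U}\setminus\{0\}$ (after choosing $U$ away from $0$, possible since $0 \notin A$), with $E_\e$-sup $< c_\e(p)+\delta$. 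Replacing $P$ by a $p$-dimensional symmetric subcomplex carrying the index: since $\ind_{\Z/2}(P) \ge p+1$, there is a class detected in $H^p$ of the quotient; by the standard skeletal argument (the index of a complex is detected by its $p$-skeleton, cf. the Fadell–Rabinowitz construction and \cite[Lemma 4.7]{MarquesNevesInfinite}), the $p$-skeleton $P^{(p)}$ already satisfies $\ind_{\Z/2}(P^{(p)}) \ge p+1$. Finally, to land in a cubical complex $Q(m,k)$ rather than an abstract simplicial one, I would use a standard subdivision/embedding argument: a finite simplicial complex of dimension $p$ embeds equivariantly, after subdivision, as a subcomplex of some $Q(m,k)$ with the antipodal $\Z/2$-action (barycentric coordinates followed by a cubical refinement), preserving the index. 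This produces $X \in \mathcal{C}_p$ and $h \in \Gamma(X)$ with $\sup_{h(X)} E_\e < c_\e(p) + \delta$, hence $c_\e(X) < c_\e(p) + \delta$; letting $\delta \to 0$ gives $\inf_X c_\e(X) \le c_\e(p)$.

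The main obstacle I anticipate is the finite-dimensional equivariant approximation step, i.e. producing the complex $P$ with an equivariant map to $H^1(M)$ and an equivariant map from $A$ to $P$ that together keep everything inside the neighborhood $\overline{U}$ and away from $0$ while preserving (or not decreasing) the $\Z/2$-index. One has to be careful that the partition of unity and the nerve construction can be made genuinely equivariant (using the free $\Z/2$-action to average, or to pair up simplices), and that the maps can be arranged to be \emph{odd} and to avoid the origin. The rest — monotonicity, the skeletal reduction, and the cubical embedding — are essentially formal consequences of the axioms (I1)–(I6) for $\ind_{\Z/2}$ and standard PL topology, and closely parallel \cite[Lemma 4.7]{MarquesNevesInfinite} and \cite[\S 1.5]{MarquesNevesIndex}, so I would cite those for the analogous steps and only spell out the adaptations needed because the ambient space here is the Hilbert space $H^1(M)$ rather than a space of cycles.
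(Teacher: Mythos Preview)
Your proposal is correct and follows the same overall two-inequality strategy as the paper, but the finite-dimensional approximation step is organized differently. The paper does not build an abstract simplicial nerve and then convert it to a cubical complex; instead it invokes a result of Marzocchi \cite[Proposition 3.1]{Marzocchi} to find, inside any invariant neighborhood $U$ of $A_0$, a compact symmetric set $A$ lying in a \emph{finite-dimensional linear subspace} $E\cong\R^m$ with $\ind_{\Z/2}(A)=\ind_{\Z/2}(A_0)$. Once in $\R^m$, the paper simply takes a fine cubical grid $Q(m,k)$, lets $X_m$ be the union of all top cells meeting $A$ (so $A\subset X_m\subset U$ and the map $h$ is just the linear inclusion into $H^1(M)$), and passes to the $p$-skeleton $X$ via the vanishing $H^p(X_m,X;\Z/2)=0$. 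This bypasses both the equivariant nerve/partition-of-unity construction you flag as the main obstacle and the simplicial-to-cubical embedding step, since the cubical structure is already native to $\R^m$. Your route would also work, but the paper's shortcut through Marzocchi's finite-dimensional reduction is cleaner and avoids precisely the technical points you were worried about.
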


\begin{proof}
Given $\delta>0$, let $A_0 \in \mathcal{F}_p$ be such that $\sup E_\e(A_0) \leq c_\e(p) + \delta/2$. Given an arbitrary neighborhood $U$ of $A_0$ in $H^1(M) \setminus \{0\}$, we can find a subspace $E\subset H^1(M)$ with $m:=\dim E<+\infty$ and $A \subset U \cap E$ such that $\ind_{\Z/2}(A)=\ind_{\Z/2}(A_0)$ (see \cite[Proposition 3.1]{Marzocchi}). We identify $E$ with $\R^m$ by a linear isomorphism $T:\R^m \to E$ such that $T(Q^m)$ is a cube in $H^1(M)$ containing $A$ in its interior. Under this identification, choose $k \in \N$ such that $\alpha \subset U$ for every $m$-cell $\alpha \in Q(m,k)_m$ with $\alpha \cap A \neq \emptyset$. If $X_m$ is the union of all such cells, then $A \subset X_m \subset U$ and thus $\ind_{\Z/2}(X_m)\geq p+1$, provided we choose $U$ so that $\ind_{\Z/2}(\overline U)=\ind_{\Z/2}(A)$. Let $X$ be the $p$ skeleton of $X_m$, that is, the union of all $p$-cells of $X_m$. We claim that $X \in \mathcal{C}_p$. If $U$ also satisfies $\sup E_\e(U) \leq \sup E_\e(A_0) + \delta/2$, then it will follow that
	\[c_\e(X) \leq \sup E_\e(U) \leq \sup E_\e(A_0) + \frac{\delta}{2} \leq c_\e(p) + \delta.\]
Hence, $\inf_{X \in \mathcal{C}_p} c_\e(X) \leq c_\e(p)$. In order to show that $X$ has index $\geq p+1$, it suffices to observe that $H^p(X_m,X;\Z/2) = 0$. The exactness of the cohomology sequence of the pair $(X_m,X)$ implies then that the inclusion ${X \hookrightarrow X_m}$ induces an injective morphism $H^p(X_m,\Z/2) \to H^p(X,\Z/2)$. Therefore, we have $\ind_{\Z/2}(X) \geq \ind_{\Z/2}(X_m) \geq p+1$ and $X \in \mathcal{C}_p$.
\end{proof}

\subsection{$p$-widths} Now we are ready to present the definitions of $p$-sweepouts and the $p$-widths, $\omega_p(M)$, following \cite{MarquesNevesInfinite}.

Let $X$ be a cubical subcomplex of $Q(m,k)$ and $\Phi:X \to \znm$ be a continuous map in the flat metric. We say that $\Phi$ is a \emph{$p$-sweepout} if
	\[\Phi^*(\beta^p)\neq 0 \ \mbox{in} \ H^p(X,\Z/2)\]
for some non-trivial cohomology class $\beta \in H^1(\znm, \Z/2)$. 

We recall that the first cohomology group of each connected component of $\znm$, with $\Z/2$-coefficients, is isomorphic to $\Z/2$. In fact, in the next subsection we present the more complete description of the cohomology groups of $\znm$ with $\Z/2$-coefficients, as discovered by Almgren \cite{Almgren}.

A cubical subcomplex $X$ of $Q(m,k)$ is said to be \emph{$p$-admissible} if there exists a $p$-sweepout $\Phi:X\to \znm$ that has \emph{no concentration of mass}, i.e.
	\[\lim_{r \to 0^+} \sup \left\{ ||\Phi(x)||(B_r(p)) : x \in \mathrm{dmn}(\Phi), p \in M \right\} = 0.\]

We remark that a map into $\znm$ which is continuous in the mass norm has no concentration of mass (see \cite[Lemma 3.8]{MarquesNevesInfinite}). The set of all $p$-sweepouts with no concentration of mass will be denoted by $\mathcal{P}_p$. 

Define the \emph{$p$-width of $M$} as
	\[\omega_p(M) = \inf_{\Phi \in \mathcal{P}_p} \sup_{x \in \mathrm{dmn}(\Phi)} {\bf M}(\Phi(x)).\]
Arguing as in \cite[\S 1.5]{MarquesNevesIndex} (or as in the proof of Lemma \ref{lema_pdim}) we see that it suffices to consider $p$-sweepouts defined in $p$-dimensional cubical complexes, i.e.
	\[\omega_p(M) = \inf\left\{\sup_{x \in \mathrm{dmn}(\Phi)}{\bf M}(\Phi(x)) : \Phi \in \mathcal{P}_p, \dim \mathrm{dmn}(\Phi)=p\right\}.\]

\subsection{Proof of Theorem \ref{thm:wplp}}

Theorem \ref{thm:wplp} is a direct consequence of the following approximation result:

\begin{theorem} \label{thm:comp}
Fix $\tilde \sigma \in (0,\sigma/2)$. There exist positive constants $C=C(p,M)$ and $\delta_0=\delta_0(p,M)$ with the following property. Given $\delta_1 \in (0,\delta_0)$, we can find $\e_0=\e_0(\delta_1) \in (0,\delta_1)$ such that for all $\e \in (0,\e_0)$ and every $X \in \mathcal{C}_p$ with $c_\e(X) \leq c_\e(p)+\e$, there exists an even map $\Phi:X \to \znm$ which is continuous with respect to the mass norm and satisfies
	\[\sup_{x \in X} {\bf M}(\Phi(x)) \leq \frac{c_\e(p)+\e}{4\tilde \sigma} + C\delta_1.\]
Moreover, the map $\tilde \Phi:\tilde X \to \znm$ induced by $\Phi$ in the orbit space $\tilde X=X/\{x\sim -x\}$, is a $p$-sweepout. 
\end{theorem}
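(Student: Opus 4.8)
We follow the blueprint of \cite{MarquesNevesInfinite}, transplanted to the Sobolev setting. Fix $X\in\mathcal C_p$ with $c_\e(X)\le c_\e(p)+\e$ and choose an odd $h\in\Gamma(X)$ with $\sup_X E_\e\circ h< c_\e(X)+\delta_1\le c_\e(p)+\e+\delta_1$. Composing $h$ with the truncation $\tau$ (odd and continuous by Lemma~\ref{truncar}, and energy non-increasing, as in the proof of Theorem~\ref{thm:minmax}) we may assume $|h(x)|\le 1$ for every $x\in X$; no further regularization is needed since $H^1(M)\subset BV(M)$. For $u\in H^1(M)$ with $|u|\le1$ and a.e.\ $t\in(-1,1)$ the set $\{u>t\}$ has finite perimeter and we set $T_t(u)=\partial\llbracket\{u>t\}\rrbracket\in\znm$, with $\M(T_t(u))=\hm^{n-1}(\{u=t\})$. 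Since $M$ is closed, $\partial\llbracket M\rrbracket=0$, so for a.e.\ $t$ one has, as mod-$2$ cycles, $T_t(-u)=\partial\llbracket\{u<-t\}\rrbracket=\partial\llbracket\{u\ge-t\}\rrbracket=T_{-t}(u)$; this is the identity that will make the construction even.

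The quantitative input is the Modica--Mortola inequality combined with the coarea formula: for every $x\in X$,
\[
\int_{-1}^{1}\sqrt{2W(t)}\;\M\!\big(T_t(h(x))\big)\,dt\;\le\;\int_M\sqrt{2W(h(x))}\,|\nabla h(x)|\;\le\;E_\e(h(x))\;\le\;c_\e(p)+\e+\delta_1.
\]
Because $\int_{-1}^{1}\sqrt{2W}=2\sigma$ and $\tilde\sigma<\sigma/2$, we may fix $b=b(\tilde\sigma)\in(0,1)$ with $\int_{-b}^{b}\sqrt{2W(t)}\,dt>4\tilde\sigma$; restricting the integral above to $(-b,b)$ shows that for each $x$ the slicing level $t$ can be chosen in a full-measure subset of $(-b,b)$ so that $\M(T_t(h(x)))\le(4\tilde\sigma)^{-1}(c_\e(p)+\e+\delta_1)$.

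The heart of the matter is to upgrade these pointwise-defined, merely flat-continuous slices into a single map $\Phi:X\to\znm$ that is continuous in the mass norm (hence has no concentration of mass, by \cite[Lemma~3.8]{MarquesNevesInfinite}), is even, and satisfies $\sup_X\M\circ\Phi\le(4\tilde\sigma)^{-1}(c_\e(p)+\e)+C\delta_1$. As in \cite[\S\S 3--4]{MarquesNevesInfinite}, building on the Almgren--Pitts interpolation machinery \cite{pitts}, the plan is: pass to a sufficiently fine subdivision $X(j)$; at each vertex $v\in X(j)_0$ choose a generic level $t_v\in(-b,b)$ realising the mass bound above, symmetrically, i.e.\ $t_{-v}=-t_v$; using uniform continuity of $h$ and the embedding $H^1(M)\hookrightarrow L^1(M)$, arrange in addition that $\mathcal F\big(T_{t_v}(h(v))-T_{t_v}(h(w))\big)$ is as small as desired for all adjacent pairs $v,w$; then interpolate over the skeleta of $X(j)$ by Almgren's isoperimetric choices to obtain a mass-continuous map whose mass exceeds the vertex values by at most $C\delta_1$, where $C=C(p,M)$ absorbs the combinatorial constants (here one takes $j=j(\delta_1)$ large and $\e_0=\e_0(\delta_1)$ small, using that $c_\e(p)$ stays bounded as $\e\to0$ by Theorem~\ref{thm:sublinear} to absorb the $\e$- and $\delta_1$-order terms). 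Since $t_{-v}=-t_v$ and $h$ is odd, $T_{t_{-v}}(h(-v))=T_{-t_v}(-h(v))=T_{t_v}(h(v))$, so all choices are symmetric, $\Phi$ is even and descends to $\tilde\Phi:\tilde X\to\znm$. \emph{This upgrade is the main obstacle:} the elements of $\mathcal C_p$ are only cubical complexes equipped with odd maps into $H^1(M)$, and the natural slicing map is a priori only flat-continuous with mass concentrating at singular levels, so one genuinely needs the equivariant Almgren--Pitts--Marques--Neves interpolation.

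Finally, $\tilde\Phi$ is a $p$-sweepout, i.e.\ $\tilde\Phi^*(\bar\lambda^{\,p})\neq0$ in $H^p(\tilde X;\Z/2)$, where $\bar\lambda$ generates $H^1(\znm;\Z/2)\cong\Z/2$ (Almgren \cite{Almgren}). The interpolation preserves the class $\tilde\Phi^*\bar\lambda$ determined by the discrete slicing data, so it suffices to identify this class for the generic-level slicing $\bar h$ of $h$. Given a loop $\tilde\gamma$ in $\tilde X$, lift it to a path in $X$ from $x_0$ to $x_0$ or to $-x_0$ according to whether $\langle\tilde f^*w,\tilde\gamma\rangle$ is $0$ or $1$, where $\tilde f:\tilde X\to\R\p^\infty$ classifies the double cover $X\to\tilde X$ and $w\in H^1(\R\p^\infty;\Z/2)$ is the generator; along this path the superlevel sets $\{h(\cdot)>t_\bullet\}$ return either to themselves or to their complements, so the monodromy of the filling equals the monodromy detected by $\bar\lambda$, giving $\bar h^*\bar\lambda=\tilde f^*w$ in $H^1(\tilde X;\Z/2)$ (this is the cohomological shadow of the fact that slicing a symmetric family of $H^1$-functions records the $\Z/2$-Fadell--Rabinowitz class; compare the reduction of Almgren--Pitts to one-parameter Allen--Cahn min-max in \cite{Guaraco}). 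Since $X\in\mathcal C_p$ means $\ind_{\Z/2}(X)\ge p+1$, i.e.\ $\tilde f^*(w^{p})\neq0$, multiplicativity of the cup product gives $\tilde\Phi^*(\bar\lambda^{\,p})=(\tilde f^*w)^{p}=\tilde f^*(w^{p})\neq0$. Feeding this into $\omega_p(M)\le\sup_X\M\circ\Phi$ and then letting $\e\to0$, $\delta_1\to0$ and $\tilde\sigma\uparrow\sigma/2$ yields Theorem~\ref{thm:wplp}.
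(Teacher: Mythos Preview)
Your overall strategy is the same as the paper's: pick near-optimal $h\in\Gamma(X)$, slice by suitable levels to get a discrete map $\phi_0:X(j)_0\to\znm$ with mass controlled by $E_\e/4\tilde\sigma$ and small flat fineness, interpolate to a mass-continuous map, and verify the $p$-sweepout condition via the Fadell--Rabinowitz class. However, two of your key steps are genuine gaps rather than routine citations.

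\textbf{The flat-to-mass interpolation.} You write ``interpolate over the skeleta of $X(j)$ by Almgren's isoperimetric choices to obtain a mass-continuous map''. Almgren's extension (Theorem~\ref{Alm_ext}) requires discrete data that is already fine in \emph{mass}, not in the flat metric; your $\phi_0$ is only flat-fine. The paper closes this gap via an inductive application over skeleta of a specific result of Zhou \cite[Proposition~5.8]{Zhou2015} (packaged as Theorem~\ref{itp}): starting from $\phi_0$ on $X(j)_0$, one produces $\phi_1,\ldots,\phi_p$ on successively finer vertex sets $V_i$, each with mass fineness $\le b_i\delta_1$ and mass increment $\le\delta_1$, and only then applies the Almgren extension. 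Citing ``Almgren--Pitts--Marques--Neves interpolation'' is not enough; the Marques--Neves machinery in \cite{MarquesNevesInfinite} assumes no concentration of mass, which you have not verified, and the point of invoking \cite{Zhou2015} is precisely to bypass that hypothesis.

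\textbf{Preservation of the sweepout class under interpolation.} You assert ``the interpolation preserves the class $\tilde\Phi^*\bar\lambda$ determined by the discrete slicing data'', but this is exactly what must be proved. The paper's interpolation theorem carries an additional conclusion (item (vi) of Theorem~\ref{itp}): for $1$-cells, the sum of isoperimetric choices along the refined edge equals $\llbracket U_{[1]}\rrbracket-\llbracket U_{[0]}\rrbracket$. This is what allows one to compute $F_A([\Phi\circ\alpha])$ for edge-paths $\alpha$ in $X(j)$ and to check, via Proposition~\ref{prop:pswo}, that $\ker\tilde\Phi_*=p_*\pi_1(X)$. Your monodromy heuristic (``the superlevel sets return either to themselves or to their complements'') is the right intuition for $\phi_0$, but does not by itself survive the interpolation without tracking the fillings.

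A smaller point: your flat-fineness claim compares $T_{t_v}(h(v))$ with $T_{t_v}(h(w))$, but the discrete map uses different levels $t_v,t_w$ at adjacent vertices. The paper handles this by introducing auxiliary currents $\Sigma_x=\partial\llbracket\{h_x>\alpha\}\rrbracket$ at a single fixed level $\alpha$, bounding $\mathcal F(\phi_0(x),\Sigma_x)$ by the measure of $\{|h_x|\le 1-\delta\}$ and $\mathcal F(\Sigma_x,\Sigma_y)$ by Lemma~\ref{lema:nivel}; both are $O(\e\sup E_\e)$, which is where the choice of $\e_0(\delta_1)$ enters.
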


Let us show now how this result implies Theorem \ref{thm:wplp}.

\begin{proof}[Proof of Theorem \ref{thm:wplp}]
Observe that
	\[\omega_p(M) \leq \frac{c_\e(p)+\e}{4\tilde \sigma}+(p + c_0(p,M)b_p) \delta_1.\]
for all $\delta_1 \in (0,\min\{\nu_M,c(p,M)\})$, $\e \in (0,\e_0(\delta_1))$ and $\tilde \sigma \in (0,\sigma/2)$. Hence, making $\delta_1 \downarrow 0$ (which implies $\e_0(\delta_1) \downarrow 0$), we obtain
	\[\omega_p(M) \leq \frac{\liminf_{\e \to 0^+} c_\e(p)}{4\tilde \sigma},\]
and finally, as $\tilde \sigma \uparrow \sigma/2$,
	\[\omega_p(M) \leq \frac{1}{2\sigma} \liminf_{\e \to 0^+} c_\e(p).\]

\end{proof}

The following subsections comprise the proof of Theorem \ref{thm:comp}, which follows ideas from \cite{Guaraco}. Here we present a sketch of its proof.

Let $X \in \mathcal{C}_p$ with $c_\e(X) \leq c_\e(p)+\e$. Ideally, given $h\in \Gamma(X)$, we would like to select a level set of $h(x) \in H^1(M)$ for each $x\in X$, in such a way that they form a map $X\to \znm$ continuous in the mass norm, with no concentration of mass and with mass controlled by $c_\e(p)+\e$. The problem with this is that continuity in $H^1(M)$ does not even imply continuity in the mass norm for the level sets. However, we can still show that, roughly speaking, the level sets of $h(x)$ vary continuously with respect to the flat norm. In \cite{MarquesNevesInfinite}, in order to pass from maps which are continuous in the flat norm to maps that are continuous in the mass norm, the additional condition of \textit{no concentration of mass} is required. In the same article they conjecture that such additional condition might not be necessary. Fortunately to us, this has been recently proved in X. Zhou \cite{Zhou2015}.

We apply a result from \cite{Zhou2015} in a discrete setting. More precisely, Theorem \ref{itp} allow us to interpolate a discrete map defined only on the vertices of $X(k)_0$ (for some $k$) which is fine in the flat norm, to a map on $X(l)_0$ (for some bigger $l$) which is fine in the mass norm. Then we can apply another interpolation result from \cite{MarquesNevesInfinite} that allow us to construct a continuous extension of this map to the whole cubical complex $X$, with controlled mass. 

Finally, to verify that the map obtained after the interpolations is topologically non-trivial, we relate the cohomological index ${\rm Ind}_{\Z/2}$ with the cohomology of $\znm$. We do this by realizing the set of integral flat chains modulo $2$ as an orbit space of a free $\Z_2$-space. To this end, we rely on the fact that the homotopy groups of $\znm$ are the same of the infinite dimensional projective space $\R\p^\infty$, as proven by Almgren in \cite{Almgren}.

\subsection{Interpolation: discrete to continuous}

Recall that the \emph{fineness} of a map $\phi:X(j)_0 \to \znm$ is defined by
	\[{\bf f}(\phi) = \sup\left\{ {\bf M}(\phi(x)-\phi(y)) : x,y \in X(j)_0 \ \mbox{adjacent vertices} \right\}.\]
This notion can be thought as the discrete counterpart of the modulus of continuity of a map into $\znm$ with respect to the mass norm. Similarly, we can consider the fineness of a discrete map with respect to the flat metric by replacing the mass with $\mathcal{F}$ in the definition above.

The next theorem, which follows from \cite[Theorem 14.1]{MarquesNevesWillmore} (see also \cite{MarquesNevesInfinite}) allows us to obtain maps into $\znm$ which are continuous in the mass norm from a discrete map with small fineness.

\begin{theorem} \label{Alm_ext}
Given $p \in \N$, there exist constants $C_0=C_0(p,M)>0$ and $\delta_0=\delta_0(M)>0$ with the following property: given a $p$-dimensional cubical subcomplex $X$ of some $Q(m,k)$ and a discrete map $\phi:X_0 \to \znm$ with ${\bf f}(\phi)<\delta_0$, there exists a map $\Phi:X \to \znm$ which is continuous with respect to the mass norm satisfying:
	\begin{enumerate}
		\item $\Phi$ extends $\phi$, that is, $\Phi|_{X_0} = \phi$.
		\item For every $j$ and every $\alpha \in X_j$, the restriction of $\Phi$ to $\alpha$ depends only on the values of $\phi(x)$ for $x \in \alpha_0$.
		\item For all $x,y \in X$ which lie in a common $p$-cell of $X$, it holds
			$${\bf M}(\Phi(x)-\Phi(y)) \leq C_0 \ {\bf f}(\phi).$$
	\end{enumerate}
\end{theorem}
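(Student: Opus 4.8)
The plan is to invoke the Almgren interpolation machinery of Pitts as refined by Marques--Neves; indeed the statement is essentially \cite[Theorem 14.1]{MarquesNevesWillmore}, so I would reduce to it after checking that the hypotheses match and that the constants it produces depend only on $p$ and $M$. The map $\Phi$ is built by induction on the skeleta of $X$. On the $0$-skeleton one sets $\Phi|_{X_0}=\phi$, which gives property (1). Assuming $\Phi$ has been defined on the $(q-1)$-skeleton so that it is continuous in the mass norm and its restriction to each cell depends only on the vertex values of $\phi$ on that cell, one extends over each $q$-cell $\alpha$, $1\le q\le p$, by a procedure internal to $\alpha$, which automatically yields property (2).

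The key geometric input is the isoperimetric lemma in $\znm$ (Almgren; see \cite{pitts} and \cite{MarquesNevesInfinite}): there are $\delta_0=\delta_0(M)>0$ and $\nu_M>0$ such that any $S,T\in\znm$ with ${\bf M}(S-T)<\delta_0$ bound a unique $Q(S,T)\in{\bf I}_n(M,\Z/2)$ with ${\bf M}(Q(S,T))\le \nu_M\,{\bf M}(S-T)$, depending continuously in the mass norm on the pair $(S,T)$. Using this filling together with a coning/slicing construction, one first treats $1$-cells: a path of cycles joining the two endpoint values of $\Phi$ is obtained by continuously sweeping out the small filling $Q$ at those endpoints, with mass along the path bounded by the endpoint masses plus $C\,{\bf f}(\phi)$; for a general $q$-cell $[0,1]^q$ one iterates this over the vertices, at each stage coning off the discrepancy between the partially defined map and the next vertex value, each such discrepancy being controlled by ${\bf f}(\phi)$. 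This is exactly the content of the extension lemmas in \cite[\S 14]{MarquesNevesWillmore}, and since the number of coning operations needed on a single cell is bounded in terms of $p$ only, one gets property (3) with $C_0=C_0(p,M)$, provided ${\bf f}(\phi)<\delta_0$ is small enough that every filling invoked stays in the isoperimetric regime.

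The point requiring care is that all estimates be in the \emph{mass} norm with constants depending only on $p$ and $M$: this is precisely why the hypothesis is that ${\bf f}(\phi)$ --- the fineness defined with ${\bf M}$, not with $\mathcal{F}$ --- is small, so that the isoperimetric choices exist at every step and the mass added by each of the boundedly-many conings per cell is comparable to ${\bf f}(\phi)$. Continuity of $\Phi$ in the mass norm then holds cell by cell, and a map continuous in the mass norm has no concentration of mass (as recalled after the statement), so $\Phi$ is of the required type. I expect the only delicate point to be verifying that the coning construction can be run uniformly over all $q\le p$ with mass bounds depending only on $p$ and $M$; the rest is a transcription of \cite[\S 14]{MarquesNevesWillmore}.
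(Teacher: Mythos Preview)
Your proposal is correct and matches the paper's approach exactly: the paper does not prove this theorem but simply cites it as following from \cite[Theorem 14.1]{MarquesNevesWillmore} (see also \cite{MarquesNevesInfinite}). Your sketch of the underlying induction on skeleta via isoperimetric fillings goes beyond what the paper provides, but accurately reflects the content of the cited reference.
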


Following \cite{MarquesNevesInfinite}, the map $\Phi$ given by the theorem above will be called the \emph{Almgren extension} of $\phi$. We emphasize that the constant $C$ above depends only on the dimension of the cubical complex $X$, and not on $m$.

\subsection{Almgren's Isomorphism}
In the seminal paper \cite{Almgren}, F. Almgren proved that the $i$-th homotopy group of the space of mod 2 $k$-dimensional integral flat chains in $M$, $\mathcal{Z}_k(M,\Z/2)$, with the flat topology is isomorphic to the $(k+i)$-th homology group of $M$, with $\Z/2$ coefficients, i.e.
	\[\pi_i(\mathcal{Z}_k(M,\Z/2),\{0\}) \backsimeq H_{k+i}(M,\Z/2),\]
for all $i$. We denote by
	\[F_A:\pi_1(\znm,\{0\}) \to H_n(M,\Z/2)\]
the corresponding isomorphism for $k=n-1$ and $i=1$. Since $H_n(M,\Z/2)$ is isomorphic to $\Z/2$, and $H_{n+i}(M,\Z/2)$ are trivial for $i\geq 1$, Almgren's result shows that the path connected component of $\znm$ containing $0$ has the same homotopy groups of the infinite dimensional real projective space $\R\p^\infty$.

\begin{remarq} \label{rmk:componentes}
For $i=0$, we get a bijection between $\pi_0(\znm)$, which can be identified with the set of path connected components of $\znm$, and $H_{n-1}(M,\Z/2)$. In particular, if this homology group is non-trivial, then $\znm$ is not path connected. Nevertheless, all path connected components of the space of chains are isometric, since it is a topological group with respect to the flat metric and the translations are isometries. Furthermore, from the description of Almgren's isomorphism given below, we see that it is possible to extend $F_A$ to the fundamental group of $\znm$ with base point in any path connected component. Hence, we can omit the reference to the base point in the fundamental group of $\znm$, keeping in mind that it refers to the fundamental group of a certain path connected component.
\end{remarq}
	
The isomorphism $F_A$ can be explicitly described as follows. There are constants $\nu_M>0$ and $\rho_M>0$ such that for every cycle $T \in \znm$ with $\mathcal{F}(T)<\nu_M$, there exists an integral $\Z/2$ current $S \in {\bf I}_n(M)$ such that $\partial S = T$ and $\M(S)\leq \rho_M \mathcal{F}(T)$. Such current $S$ is called an \emph{isoperimetric choice} for $T$. It is possible to show that, if we choose $\nu_M$ small enough, this choice is actually unique (see \cite{MarquesNevesInfinite}). 

Given a map $\phi:S^1 \to \znm$ continuous in the flat topology, we choose $k \in \N$ so that
	\[\mathcal{F}(\phi(x_{j+1})-\phi(x_{j})) \leq \nu_M \ \mbox{for} \ j=0,\ldots, 3^k-1,\]
where $x_j = e^{2\pi i \cdot j3^{-k}}$. If $A_j$ is the isoperimetric choice for $\phi(x_{j+1})-\phi(x_j)$, then Almgren defines
	\[F_A([\phi]) = \left[ \sum_{j=0}^{3^k-1}A_j \right] \in H_n(M,\Z/2).\]
We say that $\phi$ is a \emph{sweepout} if this homology class is non-trivial, which amounts to $\phi$ being a homotopically non-trivial path in $\znm$.

If $Z$ is a path connected component of $\znm$, then by Hurewicz Theorem and Remark \ref{rmk:componentes} we see that the first homology group (with integer coefficients) of $Z$ is isomorphic to $\Z/2$. By the Universal Coefficient Theorem for cohomology, it follows that
	\[H^1(Z,\Z/2) \backsimeq \Z/2, \quad \mbox{for all} \quad Z \in \pi_0(\znm).\]


\begin{remarq} \label{rmk:eqswo} As pointed in \cite{MarquesNevesInfinite}, a continuous map $\Phi:X \to \znm$ defined on a cubical complex $X$ is a $p$-sweepout if and only if we can find a cohomolgy class $\beta \in H^1(X,\Z/2)$ with $\beta^p\neq 0$ and the following property: given a cycle $\gamma:S^1 \to X$, we have $\beta[\gamma] \neq 0$ iff $\Phi \circ \gamma$ is a sweepout. By Remark \ref{rmk:componentes} and the description of $F_A$ given above, we see that this fact holds for the other connected components of $\znm$ as well.
\end{remarq}

\begin{remarq} The lower bound in Theorem \ref{thm:wplp} may be described more precisely in terms of sweepouts which detect the non-trivial cohomology class ${\lambda \in H^1(\znm,\Z/2)}$ of the path component of $0$ in $\znm$. In other terms, we will prove that $c_\e(p)$ is bounded below by $2\sigma \cdot \omega(\lambda^p,M)$, as $\e \to 0^+$, where $\omega(\lambda^p,M) \geq \omega_p(M)$ is the min-max value associated to the family of sweepouts which detect $\lambda^p$.
\end{remarq}

\subsection{${\rm Ind_{\Z/2}}$ and non-trivially of $p$-sweepouts}\label{top-swo} In this subsection we establish a relation between the cohomological index ${\rm Ind_{\Z/2}}$ and the notion of $p$-sweepout, which will serve as a criterion for checking the non-triviality of the $p$-sweepouts.

For this purpose, we compare non-trivial cohomology classes in \linebreak $H^1(\R\p^\infty,\Z/2)$ and $H^1(\znm,\Z/2)$. We do this in terms of equivariant maps by realizing $\znm$ as the orbit space of a free $\Z/2$-space. From the discussion presented after the definition of Almgren's Isomorphism, we observe that the path connected components of $\znm$ are Eilenberg-Mac Lane spaces of type $K(\Z/2,1)$, which means its homotopy groups are null except the first which is isomorphic to $\Z/2$ (see \cite{Spanier}). Hence a natural candidate for this $\Z/2$-space is the universal covering space of one of the connected components of $\znm$. To guarantee that this covering space exists, we need to verify that $\znm$ is locally path connected and semi-locally simply connected. The latter follows directly from \cite[Corollary 3.6]{MarquesNevesInfinite}, while the former is a consequence of the results of \cite{Almgren}, as we state below.

\begin{lemma} The space $\znm$ is locally path connected with respect to the flat topology, that is, the path connected components of every open set $U \subset \znm$ in the flat topology are open.
\end{lemma}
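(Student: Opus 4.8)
The plan is to establish local path connectedness of $\znm$ in the flat topology by combining a deformation retraction principle with the explicit isoperimetric filling available at small flat distances. First I would recall the key tool from Almgren's work (and its refinement in \cite{MarquesNevesInfinite}): there exist constants $\nu_M>0$ and $\rho_M>0$ such that any cycle $T \in \znm$ with $\mathcal{F}(T)<\nu_M$ bounds a unique isoperimetric choice $S \in {\bf I}_n(M)$ with $\partial S = T$ and $\M(S) \leq \rho_M \mathcal{F}(T)$, and that this assignment $T \mapsto S$ is continuous in the flat topology on the ball $B_{\nu_M}^{\mathcal{F}}(0)$. The point is that on such a flat ball one can write down an explicit path from $T$ to $0$, namely by ``sliding'' along the isoperimetric filling (for instance, using the slicing of $S$ by a smooth exhausting function, or more simply the straight-line homotopy in the normal current structure), and that this construction depends continuously on $T$. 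This shows that the flat ball $B_{\nu_M}^{\mathcal{F}}(0) \cap \znm$ (intersected with the path component of $0$) is path connected, and in fact that any smaller flat ball around $0$ is path connected inside it.

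Next I would globalize this. Given an open set $U \subset \znm$ and a point $T_0 \in U$, pick $r>0$ small enough that the flat ball $B_r^{\mathcal{F}}(T_0)$ is contained in $U$ and $r < \nu_M$. Since $\znm$ is a topological group under addition of cycles with the flat metric, and translation by $T_0$ is an isometry, the ball $B_r^{\mathcal{F}}(T_0)$ is the translate of $B_r^{\mathcal{F}}(0)$; so the path-connectedness result for flat balls around $0$ transfers verbatim to give that $B_r^{\mathcal{F}}(T_0)$ is path connected. Hence every point of $U$ has an arbitrarily small path-connected flat-open neighborhood contained in $U$, which is precisely the statement that the path components of $U$ are open, i.e. that $\znm$ is locally path connected in the flat topology.

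The main obstacle, and the step that requires the most care, is verifying that the contraction of a small flat ball onto $0$ is genuinely \emph{continuous} in the flat topology as a function of the base cycle $T$ — continuity of $T \mapsto S$ (the isoperimetric choice) is the first ingredient, but one also needs the chosen path from $T$ to $0$ to vary continuously with $T$, uniformly in the path parameter. This is exactly the kind of interpolation/deformation argument carried out in \cite{Almgren} and revisited in \cite[Section 3]{MarquesNevesInfinite}; I would quote those results rather than reprove them. A minor additional point is the remark already made in the text (Remark \ref{rmk:componentes}): $\znm$ may be disconnected, but all path components are isometric translates of the component of $0$, so it suffices to treat that component, and the argument above using the group structure handles the rest. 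With these cited, the lemma follows as sketched.
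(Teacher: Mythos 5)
Your argument is correct in substance, but it takes a more constructive route than the paper. The paper's proof is very short: it reduces to flat balls $U=B_r^{\mathcal{F}}(T)$ and then cites Almgren's Theorem 8.2 directly, which already provides, for each $S\in U$, a radius $r_1>0$ such that every $S'\in B_{r_1}^{\mathcal{F}}(S)$ can be joined to $S$ by a continuous path lying in $B_r^{\mathcal{F}}(T)$; openness of the path components is then immediate. You instead rebuild the mechanism behind that theorem — isoperimetric filling of a small cycle plus a slicing contraction to the center — and then propagate it to arbitrary base points via the group structure of $\znm$. Both arguments ultimately rest on the same interpolation results of Almgren, so neither is more elementary; what your version buys is an explicit picture of the joining paths and an observation (translation invariance) that the paper relegates to a remark. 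Three caveats on your write-up. First, the ``straight-line homotopy in the normal current structure'' is not available here: $tS$ for $t\in(0,1)$ is not an integral current mod $2$, so only the slicing construction is viable. Second, you do not actually need the contraction to depend continuously on $T$ — that would be required for a deformation retraction, but for local path connectedness it suffices that each individual $T$ near $T_0$ be joinable to $T_0$ by some path inside $U$; insisting on continuity in $T$ adds an unnecessary (and harder) burden. Third, the contraction of $T$ to $0$ built from an isoperimetric choice $S$ with $\M(S)\leq\rho_M\mathcal{F}(T)$ only stays within flat distance $\rho_M\mathcal{F}(T)$ of $0$, so if $\rho_M>1$ the ball $B_r^{\mathcal{F}}(0)$ need not be path connected \emph{inside itself}; you should either take $\delta<r/\rho_M$ or, as the paper does, only claim that nearby points are joinable within the ambient open set $U$. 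None of these affects the conclusion, but the statement ``any smaller flat ball is path connected inside it'' should not be asserted as written.
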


\begin{proof}
It suffices to verify the result for the open balls $U=B_r^{\mathcal{F}}(T)$ in the flat metric, for every $T \in \znm$ and $r>0$. Let $C \subset U$ be a path connected component of $U$ and let $S \in C$. It follows from \cite[Theorem 8.2]{Almgren} that we can find a $r_1>0$ with the following property: for every $S' \in B_{r_1}^{\mathcal{F}}(S)$, there exists a continuous path $\alpha:[0,1] \to B_r^{\mathcal{F}}(T)$ such that $\alpha(0)=S$ \and $\alpha(1)=S'$. This shows that $S$ and all $S' \in B_{r_1}^{\mathcal{F}}(S)$ belong to the same path connected component of $U$, that is, $B_{r_1}^{\mathcal{F}}(S) \subset C$. Since $S \in C$ is arbitrary, this shows that $C$ is open.
\end{proof}

We can now state and prove the main result of this subsection.

\begin{proposition} \label{prop:pswo}
Let $X$ be a symmetric cubical subcomplex of $Q(m,k)$ with $\ind_{\Z/2}(X)\geq p+1$, for some $m,k \in \N$, and $\Phi:X \to \znm$ be a continuous map in the flat topology. Suppose that $\Phi$ is even and consider the induced map $\tilde \Phi:\tilde X \to \znm$ such that $\tilde \Phi = p \circ \Phi$, where ${p:X \to \tilde X = X/\{x \sim -x\}}$ is the orbit map. If the induced homomorphism ${\tilde \Phi_*:\pi_1(\tilde X) \to \pi_1(\znm)}$, satisfies $\ker \tilde\Phi_* = p_*\pi_1(A)$, then $\tilde\Phi$ is a $p$-sweepout.
\end{proposition}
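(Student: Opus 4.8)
The plan is to reduce the statement to a computation with the standard generator of $H^1(\R\p^\infty;\Z/2)$ transported through Almgren's isomorphism. First I would recall that, by the hypothesis $\ind_{\Z/2}(X) \geq p+1$, there is an equivariant map $f : X \to S^\infty$ whose induced map $\tilde f : \tilde X \to \R\p^\infty$ satisfies $\tilde f^*(w^p) \neq 0$ in $H^p(\tilde X;\Z/2)$, where $w$ generates $H^1(\R\p^\infty;\Z/2)$. The goal is therefore to identify $\tilde\Phi^*(\lambda) = \tilde f^*(w)$ in $H^1(\tilde X;\Z/2)$, where $\lambda$ is the non-trivial class in $H^1(\znm;\Z/2)$ of the relevant path component; once this is done, naturality of the cup product gives $\tilde\Phi^*(\lambda^p) = \tilde f^*(w^p) \neq 0$, which is exactly the statement that $\tilde\Phi$ is a $p$-sweepout (using Remark \ref{rmk:eqswo}).

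To get that identification, I would work at the level of fundamental groups. Since each path component of $\znm$ is a $K(\Z/2,1)$ and $\R\p^\infty$ is also a $K(\Z/2,1)$, a map between them is determined up to homotopy by the induced homomorphism on $\pi_1$, and the non-trivial cohomology class in $H^1$ of such a space is the pullback of $w$ under the (essentially unique) map to $\R\p^\infty$ that is an isomorphism on $\pi_1$. So it suffices to show that $\tilde\Phi_* : \pi_1(\tilde X) \to \pi_1(\znm) \cong \Z/2$ is surjective and that its kernel behaves correctly — but surjectivity of $\tilde\Phi_*$ is precisely guaranteed by the hypothesis $\ker\tilde\Phi_* = p_*\pi_1(A)$, together with the fact that $p_*\pi_1(A)$ is a proper subgroup of $\pi_1(\tilde X)$. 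The latter is exactly the content of $\ind_{\Z/2}(X) \geq p+1 \geq 2$: the double cover $X \to \tilde X$ is connected (because $X$ is, or more precisely because $\ind_{\Z/2}(X)\geq 1$ means the $\Z/2$-action is free and the classifying map $\tilde X\to\R\p^\infty$ is non-trivial on $\pi_1$), so $p_*\pi_1(X) = p_*\pi_1(A)$ has index $2$ in $\pi_1(\tilde X)$; hence $\tilde\Phi_*$ is onto a group of order $2$, i.e. an isomorphism $\pi_1(\tilde X)/p_*\pi_1(X) \xrightarrow{\sim} \pi_1(\znm)$.

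Concretely, I would argue as follows. Let $Z$ be the path component of $\znm$ containing the image of $\Phi$, and let $\widehat Z \to Z$ be its universal cover, which by the preceding lemma and \cite[Corollary 3.6]{MarquesNevesInfinite} exists and carries a free $\Z/2$-action with orbit space $Z$ (since $\pi_1(Z)\cong\Z/2$ via $F_A$). The class $\lambda \in H^1(Z;\Z/2)$ corresponds to this double cover, equivalently $\lambda = g^*(w)$ for the classifying map $g : Z \to \R\p^\infty$ inducing the identity on $\pi_1 \cong \Z/2$. Now consider the composite $\tilde X \xrightarrow{\tilde\Phi} Z \xrightarrow{g} \R\p^\infty$; on $\pi_1$ it is the composite of $\tilde\Phi_*$ with an isomorphism, so it has kernel exactly $p_*\pi_1(A)=p_*\pi_1(X)$, which is the same kernel as $\tilde f_* : \pi_1(\tilde X) \to \pi_1(\R\p^\infty)$ (that kernel is always the image of $\pi_1$ of the total space of the cover classified by $\tilde f$, namely $f(X)$, and since $f$ is equivariant this image equals $p_*\pi_1(X)$). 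Two maps from $\tilde X$ into the aspherical space $\R\p^\infty$ inducing the same homomorphism on $\pi_1$ are homotopic, so $g\circ\tilde\Phi \simeq \tilde f$, whence $\tilde\Phi^*(\lambda) = \tilde\Phi^*g^*(w) = \tilde f^*(w)$. Taking $p$-th cup powers and invoking $\tilde f^*(w^p)\neq 0$ finishes the proof.

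The main obstacle I anticipate is the bookkeeping at the level of $\pi_1$ across different path components of $\znm$: one must make sure that $F_A$ (originally defined with base point $0$) transports correctly to the component $Z$ — this is handled by Remark \ref{rmk:componentes}, using that translations are isometries of $\znm$ — and that "even map $\Phi$, induced map $\tilde\Phi$ on the orbit space" interacts well with the covering-space classification, i.e. that $\tilde\Phi$ genuinely classifies a double cover pulled back from $X\to\tilde X$. A secondary technical point is justifying that $X$ (hence each of its path components meeting the argument) is connected, or reducing to that case; since $\ind_{\Z/2}$ is subadditive and $\ind_{\Z/2}(X)\geq p+1\geq 2$, at least one path component has positive index and one restricts attention there, which also ensures $p_*\pi_1(A)$ is proper. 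Apart from these, the argument is a standard obstruction-theoretic / $K(\pi,1)$ computation and should go through cleanly.
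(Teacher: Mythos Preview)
Your proposal is correct and follows essentially the same route as the paper: both arguments pass to the universal cover $E \to Z$ of the relevant component $Z \subset \znm$, use the hypothesis $\ker\tilde\Phi_* = p_*\pi_1(X)$ to identify $g\circ\tilde\Phi$ (your $g$ is the paper's $\tilde\xi$) with the classifying map $\tilde f$ for $X\to\tilde X$, and then invoke $\ind_{\Z/2}(X)\geq p+1$ to get $\tilde\Phi^*(\lambda)^p = \tilde f^*(w^p)\neq 0$. The only difference is packaging: where you appeal directly to the homotopy classification of maps into the aspherical target $\R\p^\infty$ to conclude $g\circ\tilde\Phi \simeq \tilde f$, the paper instead lifts $\Phi$ to an explicit map $F:X\to E$, checks by hand via a deck-transformation computation that $F$ is $\Z/2$-equivariant, and then composes with an equivariant $\xi:E\to S^\infty$ to \emph{produce} an equivariant $X\to S^\infty$ (so $\tilde\xi\circ\tilde\Phi$ is literally a classifying map). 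Your use of the $K(\pi,1)$ machinery is cleaner; the paper's explicit lifting makes the role of the kernel hypothesis more visible and avoids citing the obstruction-theoretic fact. The remaining technical points you flag (reducing to a connected component, transporting $F_A$ to other components via Remark~\ref{rmk:componentes}) are handled identically in both.
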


\begin{proof}
We can assume that $X$ is path connected, since one of its path connected components must have cohomological index $\geq p+1$. Denote by $Z$ the path connected component of $\znm$ containing $\Phi(X)$. From the previous lemma and Corollary 14 of \cite[\S 2.5]{Spanier}, we see that there exists a covering map $\pi:E \to Z$ with $\pi_1(E)=0$. Since the morphism
	\[\Phi_*=\tilde\Phi_* \circ p_*: \pi_1(\tilde X) \to \pi_1(Z)\]
is trivial, we can lift $\Phi$ to a continuous map $F:X \to E$, so that $\pi \circ F = \Phi$. We regard $E$ as a $\Z/2$-space with the natural action of $\pi_1(Z) \backsimeq \Z/2$.

We assert that the map $F$ is equivariant. In fact, choose a path $\alpha:[0,1] \to X$ such that $\alpha(1)=-\alpha(0)$. Then $[p\circ \alpha]$ is non-trivial in $\pi_1(\tilde X,p\circ\alpha(0))$ and does not belong to $p_*\pi_1(X,\alpha(0))$, which shows that $\tilde \Phi_*[p\circ \alpha]=[\Phi \circ \alpha]$ is nonzero in $\pi_1(Z)$. We will prove that the action of $\left[p\circ \alpha\right]$ on $X$ induced by the action of $\pi_1(\tilde X)$ on $X$ agrees with the antipodal map. In fact, let $x \in X$ and choose a path $\gamma:[0,1] \to X$ joining $x$ to $\alpha(0)$. We write $\tilde \gamma = p\circ \gamma$ and $\tilde \beta = \tilde\gamma^{-1} * (p\circ \alpha) * \tilde \gamma$. The lift $\beta$ of $\tilde\beta$ with respect to $p$ satisfies $\beta(1) \in p^{-1}(p(\beta(1)))=\{x,-x\}$ and $\beta(1) = \left[p\circ \alpha\right] \cdot x$. Noting that the action of $\pi_1(\tilde X)$ on $X$ is free, we get $\left[ p \circ \alpha \right] = -x$. To conclude that $F$ is equivariant, we observe that $F\circ \beta$ is a lift of $\tilde\Phi(\tilde \beta) = (\pi \circ F \circ \gamma)^{-1} * (\Phi \circ \alpha) *(\pi \circ F \circ \gamma)$ and therefore $$F(-x) = F(\beta(1)) = [\Phi \circ \alpha] \cdot F(\gamma(0)) = [\Phi \circ \alpha] \cdot x.$$


Now let $\xi:E \to S^\infty$ be a continuous equivariant map, so that $\tilde \xi: Z \to \R\p^\infty$ is a classifying map for the $\Z/2$-bundle $E \to Z$. From the claim above, we see that the map $\tilde f = \tilde \xi \circ \tilde \Phi$ is a classifying map for $X \to \tilde X$. If $w$ is the non-trivial cohomology class in $H^1(\R\p^\infty,\Z/2)$, then $\lambda := \tilde f^{*}w \in H^1(\tilde X,\Z/2)$ satisfies $\lambda^p \neq 0$. We will show that given a path $\gamma:S^1 \to \tilde X$, we have $\lambda[\gamma] \neq 0$ if, and only if, $\tilde \Phi \circ \gamma$ is a sweepout. By Remark \ref{rmk:eqswo}, this proves that $\tilde\Phi$ is a $p$-sweepout. 

Given such a path, we have
	\[\tilde f^*(w)[\gamma] = \lambda[\gamma] \neq 0 \iff w(\tilde\xi_*[\tilde\Phi \circ \gamma]) \neq 0 \iff \tilde\xi_*[\tilde\Phi \circ \gamma] \neq 0 \ \mbox{in} \ H_1(\R\p^\infty).\]
Using Hurewicz Theorem, one easily verifies that $\tilde \xi$ induces an isomorphism $H_1(Z) \to H_1(\R\p^\infty)$ and
	\[\lambda[\gamma] \neq 0 \iff [\tilde\Phi \circ \gamma] \neq 0 \ \mbox{in} \ H_1(Z) \iff [\tilde \Phi \circ \gamma] \neq 0 \ \mbox{in} \ \pi_1(Z),\]
This shows that $\lambda[\gamma] \neq 0$ if, and only if, $\Phi \circ \gamma$ is a sweepout, and the claimed result.
\end{proof}

\subsection{Construction of a discrete map fine in the flat norm}  In this subsection we show how to obtain discrete even maps into $\znm$ which are fine in the flat metric, from maps in $\Gamma(X)$. We do this for sufficiently small $\e>0$ and almost optimal complexes $X \in \mathcal{C}_p$. More precisely, we choose finite perimeter sets $\{h(x)>s_x\}$ for each vertex $x \in X(k)_0$, in a sufficiently fine subdivision of $X$, in such a way that for each pair of adjacent vertices the perimeter of these sets are close in the flat metric. This gives us a first discrete approximation of a $p$-sweepout.

Fix $\tilde \sigma \in (0,\sigma/2)$, where $\sigma=\int_{-1}^{1}\sqrt{W(s)/2}\,ds$, and let $h\in\Gamma(X)$ for some $X \in \mathcal{C}_p$ which is a cubical subcomplex of $Q(m,k)$, with $m,k \in \N$. For each $x \in X$ write $h_x=h(x)\in H^1(M)$ and consider its normalization given by $\tilde h_x=\Psi \circ h_x$, where $\Psi:\R \to \R$ is the function
	\[\Psi(t) = \int_0^t \sqrt{W(s)/2}\,ds.\]
Notice that $\Psi$ takes values in the interval $[-\sigma/2,\sigma/2]$. This normalization is intended so that the $BV$-norm of $\tilde h_x$ is bounded by the energy of $h_x$. More precisely we have
\begin{equation}\label{normalization}
	|\nabla \tilde h_x| = \sqrt{W(h_x)/2} \cdot |\nabla h_x| \leq \frac{1}{2}\left(\frac{\e|\nabla h_x|^2}{2} + \frac{W(h_x)}{\e} \right).
\end{equation}
	
For every $x\in X$, there exists $\tilde s_x \in [-\tilde \sigma, \tilde \sigma]$ for which $\{\tilde h_x>\tilde s_x\}$ is a set of finite perimeter satisfying
	\[2\tilde \sigma \M(\partial \llbracket \{\tilde h_x > \tilde s_x\}\rrbracket) = 2\tilde\sigma ||\partial \{\tilde h_x > \tilde s_x\}||(M) \leq \int_{-\tilde \sigma}^{\tilde \sigma} ||\partial \{\tilde h_x > s\}||(M) \, ds,\]
where $||\partial E||$ denotes the total variation measure of ${\bf 1}_E$ for a set $E \subset M$ of locally finite perimeter and $\llbracket U \rrbracket$ denotes the mod 2 flat $n$-current associated to an open set $U \subset M$. The equality on the left follows from \cite[Remark 27.7]{SimonBook}. Furthermore, by (\ref{normalization}) and the coarea formula for BV functions (see \cite[\S 5.5]{EvansGariepy}),
	\[\int_{-\tilde \sigma}^{\tilde \sigma} ||\partial \{\tilde h_x > s\}||(M)\,ds \leq ||D\tilde h_x||(M) = \int_M|\nabla \tilde h_x| \leq E_\e(h_x)/2.\] Which implies \[ \M(\partial \llbracket \{\tilde h_x > \tilde s_x\}\rrbracket) \leq E_\e(h_x)/4\tilde\sigma.\] 

By the symmetry of $h$, we see that $\tilde s_x$ may be chosen so that $\tilde s_{-x} = -\tilde s_x$. Since for every $x\in X$ the set of all $s \in [-\tilde \sigma, \tilde \sigma]$ for which the set $\{\tilde h_x = s\}$ has positive $\hm^n$ measure is at most countable, we can also assume that $\hm^n(\{\tilde h_x = \tilde s_x\})=0$. This implies
	\[\llbracket \{ \tilde h_x > \tilde s_x \} \rrbracket - \llbracket \{\tilde h_{-x} > \tilde s_{-x} \}\rrbracket = \llbracket M \setminus \{\tilde h_x= \tilde s_x\} \rrbracket = \llbracket M \rrbracket, \quad \mbox{for all} \quad x \in X.\]

Since $\Psi$ is odd and strictly increasing, we can choose $\delta \in (0,1)$ depending only on $\tilde \sigma$, so that $s_x :=\Psi^{-1}(\tilde s_x) \in (-1+\delta,1-\delta)$ for all $x \in X$. Now, for $j \in \N$ we define $\phi_0:X(j)_0 \to \znm$ as
	\[\phi_0(x) = \partial \llbracket \{h_x > s_x\} \rrbracket.\] 
This is a good discrete approximation of a $p$-sweepout since, as we saw above, we have a control for its mass in terms of the energies $E_\e(h_x)$. It is left to show that  $\phi_0$ is arbitrarily fine with respect to the flat metric provided we we pick $j \in \N$ sufficiently large. To obtain this control of the fineness in the flat norm we use the following lemma, which is a restatement of \cite[Lemma 8.11]{Guaraco}, to construct auxiliary currents $\Sigma_x$ which vary finely with respect to $x$ and are close to $\phi_0(x)$ in the flat norm.

\begin{lemma} \label{lema:nivel}
Let $\delta \in (0,1)$ and $\alpha \in (-1+\delta,1-\delta)$. For every $h \in \Gamma(X)$, write
	\[\Omega_x = \{h_x >\alpha\}.\]
Given $\e>0$, there exists $\zeta=\zeta(\delta,h,\alpha,\e)>0$ such that
	\[\hm^n(\Omega_x \setminus \Omega_y) \leq 2C_\delta^{-1} \e  \sup_{h(X)} E_\e,\]
for all $x,y \in X$ such that $|x-y|<\zeta$, where $C_\delta=W(1-\delta)>0$.
\end{lemma}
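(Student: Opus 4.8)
The plan is to control $\hm^n(\Omega_x\setminus\Omega_y)$, where $\Omega_x\setminus\Omega_y=\{h_x>\alpha\}\cap\{h_y\le\alpha\}$, by splitting this set according to whether $h_x$ is moderate or large. First I would reduce to the case $0<\sup_{h(X)}E_\e<\infty$: finiteness is needed for the inequality to be non-vacuous, and positivity is automatic, since the monotonicity of the index gives $\ind_{\Z/2}(h(X))\ge\ind_{\Z/2}(X)\ge p+1$, which forbids $h(X)$ from lying in $\{\pm1\}$, the zero set of $E_\e$ (alternatively, if $\sup_{h(X)}E_\e=0$ then every $h_x$ is the constant $\pm1$, hence locally constant in $x$, and the lemma is immediate). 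Then I would record the elementary fact that, under Hypothesis A together with the evenness assumption (so $\gamma=0$), $W$ is strictly decreasing on $[0,1]$; since $W$ is also even and $|\alpha|<1-\delta$, this yields the pointwise bound
$$W(s)\ \ge\ W(1-\delta)\ =\ C_\delta\qquad\text{for all }s\in[\alpha,\,1-\delta].$$

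Next I would write $\Omega_x\setminus\Omega_y=P_1\cup P_2$ with $P_1=\{\alpha<h_x\le 1-\delta\}\cap\{h_y\le\alpha\}$ and $P_2=\{h_x>1-\delta\}\cap\{h_y\le\alpha\}$. On $P_1$ we have $W(h_x)\ge C_\delta$, so by the definition of $E_\e$,
$$\hm^n(P_1)\ \le\ \frac{1}{C_\delta}\int_M W(h_x)\ \le\ \frac{\e}{C_\delta}\,E_\e(h_x)\ \le\ \frac{\e}{C_\delta}\,\sup_{h(X)}E_\e.$$
On $P_2$ we have $h_x-h_y>(1-\delta)-\alpha=:\beta>0$, so Chebyshev's inequality gives
$$\hm^n(P_2)\ \le\ \beta^{-2}\!\int_M (h_x-h_y)^2\ =\ \beta^{-2}\|h_x-h_y\|_{L^2(M)}^2\ \le\ \beta^{-2}\|h_x-h_y\|_{H^1(M)}^2.$$

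Finally, since $X$ is a compact cubical complex and $h\colon X\to H^1(M)$ is continuous, it is uniformly continuous; hence there is $\zeta=\zeta(\delta,h,\alpha,\e)>0$ so that $|x-y|<\zeta$ forces $\|h_x-h_y\|_{H^1(M)}^2<\beta^2 C_\delta^{-1}\e\sup_{h(X)}E_\e$, and therefore $\hm^n(P_2)<C_\delta^{-1}\e\sup_{h(X)}E_\e$. Adding the two estimates gives $\hm^n(\Omega_x\setminus\Omega_y)<2C_\delta^{-1}\e\sup_{h(X)}E_\e$, as claimed. The one step requiring genuine care is the pointwise lower bound $W\ge C_\delta$ on $[\alpha,1-\delta]$, which must be extracted from the precise shape of the double-well potential in Hypothesis A (using evenness to cover the case $\alpha<0$); the remainder is a routine combination of Chebyshev's inequality, the trivial bound $\int_M W(h_x)\le\e E_\e(h_x)$, and uniform continuity of $h$.
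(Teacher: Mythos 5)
Your proof is correct, and it is essentially the argument the paper relies on: the paper gives no proof here but defers to \cite[Lemma 8.11]{Guaraco}, whose proof is exactly your decomposition into the transition region $\{|h_x|\le 1-\delta\}$ (controlled by the potential term, as in \cite[Lemma 8.10]{Guaraco}) and the jump region $\{h_x>1-\delta,\ h_y\le\alpha\}$ (controlled by Chebyshev plus uniform continuity of $h$ on the compact complex $X$). Your careful treatment of the degenerate case $\sup_{h(X)}E_\e=0$ and of the pointwise bound $W\ge C_\delta$ on $\{|s|\le 1-\delta\}$ via evenness and Hypothesis A is accurate.
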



Choose $\alpha \in (-1+\delta, 1-\delta)$ such that, for all $x \in X \cap \Q^m$, the open sets $\Omega_x=\{h_x>\alpha\}$ have finite perimeter and consider the positive number $\zeta=\zeta(\delta,h,\alpha,\e)$ given by the previous lemma. Choosing $j \in \N$ such that $|x-y|<\zeta$ whenever $x$ and $y$ are vertices of $X(j)_0$ which lie in a common $p$-cell of $X(j)$.

For such $x,y \in X(j)_0$, if we write $\Sigma_x = \partial \llbracket \Omega_x \rrbracket$ then
	\[\mathcal{F}(\Sigma_x, \Sigma_y) \leq \M(\llbracket \Omega_x \rrbracket - \llbracket \Omega_y \rrbracket) \leq \hm^n(\Omega_x \setminus \Omega_y) + \hm^n(\Omega_y \setminus \Omega_x).\] In particular, by the lemma above, for $x,y$ in a common $p$-cell we have $\mathcal{F}(\Sigma_x, \Sigma_y) \leq 4C_\delta^{-1} \e  \sup_{h(X)} E_\e$.
	
On the other hand
	\begin{align*}
		\mathcal{F}(\phi_0(x), \Sigma_x) & \leq \M\left(\llbracket \{h_x > s_x\} \rrbracket - \llbracket \Omega_x \rrbracket\right)\\
		& \leq \hm^n\left( \{s_x < h_x \leq \alpha\} \right) + \hm^n\left( \{\alpha < h_x \leq s_x\}  \right)\\
		& \leq \hm^n\left( \{|h_x| \leq 1-\delta\} \right) \leq C_\delta^{-1}\e E_\e(h_x).
	\end{align*}
Where the last inequality follows directly from the definition of $E_\e$ and the hypothesis on $W$ (\cite[Lemma 8.10]{Guaraco}).

Finally, we obtain, for every such pair of vertices,
	\[\mathcal{F}(\phi_0(x),\phi_0(y)) \leq 6C_\delta^{-1}\e \cdot \sup_{x\in X}(E_\e \circ h_x).\]

Given $\tilde\rho>0$, we choose $\e>0$ such that $6\e(c_\e(p)+\e) < \tilde \rho C_\delta$, a complex $X \in \mathcal{C}_p$ and $h \in \Gamma(X)$ such that $\sup(E_\e \circ h) \leq c_\e(p) + \e$ (note that $\delta$ depends only on $\tilde \sigma)$. For this choice of $X$, the map $\phi_0$ satisfies
	\[\mathcal{F}(\phi_0(x),\phi_0(y)) < \tilde\rho,\]
for every pair of vertices $x,y \in X(j)_0$ which lie in a common $p$-cell of $X(j)_0$ (in particular, for adjacent vertices). Furthermore,
	\[\sup_{x \in X(j)_0} \M(\phi_0(x)) \leq \sup_{x \in X(j)_0} \frac{E_\e(h_x)}{4 \tilde \sigma} \leq \frac{c_\e(p)+\e}{4 \tilde \sigma}.\]

\begin{remarq}
The calculations above are essentially the reason why the discrete map $\phi_0$ will give rise to a $p$-sweepout when interpolated. They show that $\phi_0=\partial \Omega$ for a discrete map $\Omega:X(j)_0 \to {\bf I}_n(M,\Z/2)$ with small fineness in the mass norm satisfying $\Omega(x)+\Omega(-x) = \llbracket M \rrbracket$ for all $x \in X(j)_0$. It follows that the image of every closed discrete path in $X(j)_0$ under $\phi_0$ is the image of a closed discrete path in ${\bf I}_n(M,\Z/2)$ under the boundary operator and hence it must be in the kernel of Almgren's isomorphism. Similarly, a discrete path $\alpha$ in $X(j)_0$ joining a pair of antipodal points is mapped by $\phi_0$ to the boundary of a discrete path in ${\bf I}_n(M,\Z/2)$ with the property that the sum of its extremes equals $\llbracket M \rrbracket$; this shows that $F_A([\alpha])$ is non-trivial. See the following Remark \ref{loop} for more details.
\end{remarq}

\subsection{Construction of a $p$-sweepout: Interpolation from the flat to the mass norm}

The discrete map $\phi_0:X(j)_0 \to \znm$ constructed in the last subsection can be interpolated to produce a new discrete map with small fineness and such that its Almgren extension induces a $p$-sweepout defined on the orbit space $\tilde X$. We will now describe this interpolation procedure.

The fundamental result we will need is the following theorem, which is a consequence of \cite[Proposition 5.8]{Zhou2015} and the compactness, with respect to the flat topology, of the space of sets of finite perimeter and perimeter bounded above by a constant $L>0$.

\begin{theorem}\label{itp}
Let $\delta,L>0$. There exist $\eta = \eta(\delta, L) \in (0,\delta)$, $\ell = \ell(\delta, L) \in \N$, and a function $\xi=\xi_{(\delta,L)}:(0,+\infty) \to (0,+\infty)$ such that $\xi(s) \to 0$ as $s \to 0^+$ with the following property. Given $i,j_0 \in \N$ with $i \leq p$, $s \in (0,\eta)$ and a discrete map \(\phi:I_0(i,j_0)_0 \to \znm\) such that:
	\begin{enumerate} 
		\item $\mathcal{F}(\phi(x), \phi(y)) < s$ for all $x,y \in I_0(i,j_0)_0$;
		\item $\sup_{x \in I_0(i,j_0)_0}\M(\phi(x))\leq L$;
		\item for each $x \in I_0(i,j_0)_0$, there exists a set $U_x \subset M$ of finite perimeter such that $\phi(x) = \partial\llbracket U_x \rrbracket$,
	\end{enumerate}
there exists $\tilde \phi: I(i,j_0+\ell)_0 \to \znm$ satisfying:
	\begin{enumerate} {\renewcommand{\theenumi}{\roman{enumi}} \renewcommand{\labelenumi}{(\textit{\theenumi})}}
		\item $\mathcal{F}(\tilde \phi(x), \tilde \phi(y))<\xi(s)$ for all $x,y \in I(i,j_0+\ell)_0$;
		\item $\sup_{x \in I(i,j_0+\ell)_0}\M(\tilde \phi(x)) \leq \sup_{I_0(i,j_0)_0}\M(\phi) + \delta$
		\item $\tilde \phi = \phi \circ {\bf n}(j_0+\ell,j_0)$ on $I_0(i,j_0+\ell)_0$;
		\item for each $x \in I(i,j_0+\ell)_0$, there exists a set $V_x \subset M$ of finite perimeter such that $\tilde \phi(x) = \partial\llbracket V_x \rrbracket$, and $V_x=U_x$ if $x \in I_0(i,j_0)_0$.
		\item ${\bf f}(\tilde \phi) \leq \delta$, if $i=1$, and ${\bf f}(\tilde \phi)\leq b({\bf f}(\phi) +\delta)$ if $i>1$, where $b=b(p)$ is a positive constant;
		\item if $i=1$ and $\delta < \nu_M$ then the sum of the isoperimetric choices for $(\tilde\phi((v+1)3^{-(j_0+\ell)}) - \tilde\phi(v3^{-(j_0+\ell)}))$, for $v=0,\ldots, 3^{j_0+\ell}-1$, equals $T=\llbracket U_{[1]} \rrbracket - \llbracket U_{[0]} \rrbracket$, provided ${\bf M}(T)<\mathrm{Vol}(M)/2. $
	\end{enumerate}
\end{theorem}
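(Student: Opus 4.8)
The plan is to deduce Theorem~\ref{itp} from the local interpolation result of Zhou \cite[Proposition~5.8]{Zhou2015} together with the compactness, in the flat topology, of the collection
\[
\mathcal{U}_L = \left\{ U \subset M : U \text{ has finite perimeter and } \M(\partial\llbracket U \rrbracket) \le L \right\},
\]
which is a standard consequence of $BV$ compactness (see \cite{SimonBook}). Zhou's proposition furnishes the one--cell building block: it connects two cycles that bound finite--perimeter sets and are close in the flat metric by a discrete path, defined on a fine enough subdivision, which is fine in the \emph{mass} norm, whose vertices are again boundaries of finite--perimeter sets, which agrees with the prescribed cycles at its endpoints, whose masses exceed those of the endpoints by at most a prescribed error, and (in the relevant one--dimensional situation) for which the sum of the isoperimetric choices along the path is controlled. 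The reason we cannot simply invoke the interpolation theorems of \cite{MarquesNevesInfinite} or \cite[Theorem~14.1]{MarquesNevesWillmore} is precisely that we must remain inside the subclass of cycles bounding sets of finite perimeter, which is what will later allow us to choose level sets consistently. Thus for $i=1$ (where $I_0(1,j_0)_0$ is just the two endpoints of $[0,1]$) Theorem~\ref{itp} is essentially a restatement of \cite[Proposition~5.8]{Zhou2015}, and the work is (a) to promote it to all $i \le p$, and (b) to make the constants $\eta$, $\ell$ and the modulus $\xi$ depend only on $\delta$ and $L$.

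For (b), which I expect to be the technical heart of the argument, I would run a contradiction argument using the compactness of $\mathcal{U}_L$: if no single $\ell=\ell(\delta,L)$ (resp.\ $\eta$, resp.\ modulus $\xi$) were valid for all admissible input data, one could take a sequence of counterexamples, pass to flat limits of the finitely many finite--perimeter sets $U_x$ attached to a given vertex --- these limits again lie in $\mathcal{U}_L$ by lower semicontinuity of perimeter --- and contradict the qualitative conclusion of \cite[Proposition~5.8]{Zhou2015} applied to the limiting configuration. This is the usual device by which local interpolation estimates are made uniform in \cite{MarquesNevesInfinite}; the extra care here is to carry it out simultaneously for all the boundedly many cells (since $i\le p$) meeting a given vertex, so that the subdivision scale can be taken uniform.

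For (a), I would extend the interpolated map one cell at a time, by a finite induction over the skeleta of $I^i$. The boundary data on $I_0(i,j_0+\ell)_0$ is kept equal to $\phi \circ {\bf n}(j_0+\ell, j_0)$, so that all genuine interpolation takes place in the interiors of cells and property~(iii) holds automatically. To fill an interior cell whose boundary already carries a mass--fine discrete map, one uses the cell--filling construction of \cite{MarquesNevesInfinite} (coning the boundary data of the cell to its center and refining each radial one--dimensional segment by the building block from \cite[Proposition~5.8]{Zhou2015}); since each such segment starts and ends at a boundary of a finite--perimeter set and remains in that class, property~(iv) is preserved, which also produces the sets $V_x$. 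The mass bound~(ii) follows because every new vertex value lies within mass--distance $O(\delta)$ of a value on the boundary of the cell, which by the inductive mass--fineness control is within $O(\delta)$ of some $\phi(x)$; tracking the accumulation of these errors across the at most $p$ dimensions gives~(v), with a constant $b=b(p)$ when $i>1$ and the clean bound ${\bf f}(\tilde\phi)\le\delta$ when $i=1$ (where no accumulation occurs, so Zhou's one--dimensional statement applies directly). The total refinement $\ell$ is the sum of the refinements used at the various stages.

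Finally, for property~(vi) in the case $i=1$: write $\tilde\phi(v\,3^{-(j_0+\ell)}) = \partial\llbracket V_v \rrbracket$ with $V_v \in \mathcal{U}_L$, so that consecutive masses $\M(\llbracket V_{v+1}\rrbracket - \llbracket V_v\rrbracket)$ are at most ${\bf f}(\tilde\phi) \le \delta$. When $\delta < \nu_M$ the isoperimetric choice for $\tilde\phi((v+1)3^{-(j_0+\ell)}) - \tilde\phi(v\,3^{-(j_0+\ell)})$ is unique, hence equals $\llbracket V_{v+1}\rrbracket - \llbracket V_v\rrbracket$; summing over $v=0,\ldots,3^{j_0+\ell}-1$ telescopes to $\llbracket V_{3^{j_0+\ell}}\rrbracket - \llbracket V_0 \rrbracket = \llbracket U_{[1]} \rrbracket - \llbracket U_{[0]} \rrbracket = T$, and the hypothesis $\M(T) < \mathrm{Vol}(M)/2$ ensures that $T$ itself, rather than $\llbracket M \rrbracket - T$, is the genuine isoperimetric choice. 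The main obstacle throughout is organizing the induction so that the finite--perimeter structure and the flat and mass fineness controls all survive at once; the compactness/contradiction step of~(b) is where essentially all the non-bookkeeping difficulty lies.
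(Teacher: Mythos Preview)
Your proposal is correct and follows essentially the same approach as the paper, which does not give a detailed proof but simply records that the theorem ``is a consequence of \cite[Proposition 5.8]{Zhou2015} and the compactness, with respect to the flat topology, of the space of sets of finite perimeter and perimeter bounded above by a constant $L>0$.'' Your write-up is a faithful expansion of precisely those two ingredients, including the induction over skeleta and the compactness/contradiction argument to achieve uniformity of $\eta$, $\ell$, and $\xi$.
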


We will apply this result inductively to the cells of $X(j)$ to obtain, for each small $\delta_1>0$, an $\e_0=\e_0(\delta_1)\in (0,\delta_1)$ satisfying the conclusion of Theorem \ref{thm:comp}. In particular, for every $\e \in (0,\e_0)$ and every $h \in \Gamma(X)$ such that ${\sup_X(E_\e\circ h) \leq c_\e(p)+\e}$, we will be able to find a discrete map ${\phi_p:X(j_p)_0 \to \znm}$, $j_p \geq j$, with the following properties:
	\begin{enumerate} {\renewcommand{\theenumi}{\roman{enumi}} \renewcommand{\labelenumi}{(\textit{\theenumi})}}
		\item $\M(\phi_p(x)) \leq \frac{1}{4\tilde\delta}(c_\e(p)+C_0\delta_1)$ for all $x \in X(j_p)_0$, for a constant $C_0>0$ depending only on $p$ and $M$;
		\item $\phi_p = \phi_0 \circ {\bf n}(j_p,j)$ on $X(j)_0$;
		\item ${\bf f}(\tilde \phi_p)\leq c \cdot \delta_1$, for a constant $c=c(p)>0$;
		\item For each $1$-cell $\tau \in X(j)_1$, if $\alpha_\tau:[0,1] \to \tau$ is an affine homeomorphism, then
			\[Q_{\tau} = \llbracket \{h_{\alpha_\tau(1)}>s_{\alpha_\tau(1)}\} \rrbracket - \llbracket \{h_{\alpha_\tau(0)}>s_{\alpha_\tau(0)}\} \rrbracket,\]
			where $Q_\tau$ is the sum of the isoperimetric choices for the currents ${(\phi_p\circ \alpha_\tau)((v+1)3^{-\ell_p})-(\phi_p\circ \alpha_\tau)(v3^{-\ell_p})}$ for $v=0,\ldots, 3^{\ell_p}-1$ and ${\ell_p=j_p-j}$.
	\end{enumerate}
	
\begin{remarq} \label{loop}
The last item above will guarantee that if $\tau_1,\ldots, \tau_a \in X(j)_1$ are such that we can choose the corresponding homeomorphisms $\alpha_{\tau_v}$ in a way that $\alpha_{\tau_v}(1)=\alpha_{\tau_{v+1}}(0)$, for $v=1,\ldots, a-1$, then
	\begin{align*}
		\sum_{v=1}^a Q_{\tau_v} & = \sum_{v=1}^a \left(\llbracket \{h_{\alpha_{\tau_v}(1)}>s_{\alpha_{\tau_v}(1)}\} \rrbracket - \llbracket \{h_{\alpha_{\tau_v}(0)}>s_{\alpha_{\tau_v}(0)}\} \rrbracket \right) \\
		& = \llbracket \{h_{\alpha_{\tau_a}(1)}>s_{\alpha_{\tau_a}(1)}\} \rrbracket - \llbracket \{h_{\alpha_{\tau_1}(0)}>s_{\alpha_{\tau_1}(0)}\}\rrbracket.
	\end{align*}
In particular, if $\alpha_{\tau_a}(1)=\alpha_{\tau_1}(0)$, we have $\sum_{v=1}^a Q_{\tau_v} = 0$, while $\alpha_{\tau_a}(1) = - \alpha_{\tau_1}(0)$ implies
	\[\sum_{v=1}^a Q_{\tau_v} = \llbracket \{h_{\alpha_{\tau_a}(1)}>s_{\alpha_{\tau_a}(1)}\} \rrbracket - \llbracket \{h_{\alpha_{\tau_1}(0)}>s_{\alpha_{\tau_1}(0)}\} \rrbracket = \llbracket M \rrbracket.\]
These facts will be used to verify the topological non-triviality of the Almgren extension of $\phi_p$.
\end{remarq}

Let $\delta_1\in (0,\min\{\nu_M, c(p,M)\})$, where $c(p,M)$ is a constant depending only on $p$ and $M$ to be chosen later, and
	\[L_1 = \frac{\sup_{\e \in (0,1]} c_\e(p)+1}{4\tilde\delta}.\]
For each $i=1,\ldots, p$, let $L_i = L_1+(i-1)\delta_1$, $\ell_i=\sum_{v=1}^p\ell(\delta_1,L_v)$ and $\xi_i = \xi_{(\delta_1,L_i)}$. We choose $s_1 \in (0,\eta(\delta_1,L_1)/4)$ such that $s_1 < \mathrm{Vol}(M)/2$ and
	\[2(i+1)(\xi_i \circ \xi_{i-1} \circ \ldots \circ \xi_1)(s_1) < \eta(\delta_1,L_{i+1}), \ \mbox{for} \ i=1,\ldots, p-1.\]
There exists $\e_0=\e_0(\delta_1) \in (0,\delta_1)$ such that for all $\e \in (0,\e_0)$ we have ${6\e(c_\e(p)+\e) < s_1 C_\delta}$. For such $\e$, choose $h \in \Gamma(X)$ for some $X \in \mathcal{C}_p$ such that $\sup(E_\e\circ h)<c_\e(p) + \e$. Then the construction of the last subsection gives a map $\phi_0:X(j)_0 \to \znm$ satisfying
	\begin{align*}
		\mathcal{F}(\phi_0(x),\phi_0(y)) & \leq {\bf M}\left(\llbracket \{h_x > s_x\} \rrbracket - \llbracket \{h_y > s_y\} \rrbracket\right) \\
		& \leq \frac{6\e(c_\e(p)+\e)}{C_\delta}<s_1< \eta(\delta_1,L_1)
	\end{align*}
for every pair of vertices $x,y \in X(j)_0$ which lie in a $p$-cell of $X(j)_p$, and
	\[\sup_{x \in X(j)_0} \M(\phi_0(x)) \leq \frac{c_\e(p)+\e}{4\tilde \sigma} \leq L_1.\]
For each $i=1,\ldots, p$, denote by $V_i$ the set of vertices of $X(j+\ell_i)$ which lie in the $i$ skeleton of $X(j)$, i.e. 
	\[V_i = \bigcup_{\tau \in X(j)_i} \tau(\ell_i)_0 = \bigcup_{\gamma \in X(j)_{i+1}}(\gamma_0(\ell_i))_0.\]
Note that $V_p = X(j+\ell_p)_0$, since $X$ has dimension $p$. Applying Theorem \ref{itp} to each $1$-cell of $X(j)$, we get a map
	\[\phi_1: V_1\to \znm,\]
such that, for each $\tau \in X(j)_1$,
	\[\mathcal{F}(\phi_1(x), \phi_1(y)) \leq \xi_1(s_1) < \eta(\delta_1,L_2)\]
for $x,y \in \tau(\ell_1)_0$, and
	\[\sup_{x \in V_1}\M(\phi_1(x)) \leq \sup_{x \in X(j)_0} \M(\phi_0(x)) +\delta_1 \leq \frac{c_\e(p)+\e}{4\tilde \sigma} + \delta_1 \leq L_2.\]
Moreover, $\phi_1$ is given by the boundary currents induced by subsets of $M$ of finite perimeter, agrees with $\phi_0$ on the domain of $\phi_0$ and satisfies \({\bf f}(\phi_1) \leq \delta_1\). Finally, since $\phi_0$ is even, we may assume that the map $\phi_1$ is also even, after possibly redefining $\phi_1$ in half of the cells of $X(j)_1$. 

Inductively, given an even map
	\[\phi_{i-1} : V_{i-1} \to \znm\]
satisfying:
	\begin{enumerate}
		\item $\mathcal{F}(\phi_{i-1}(x),\phi_{i-1}(y)) \leq (\xi_{i-1} \circ \ldots \xi_1)(s_1) < \eta(\delta_1,L_{i})$ for every pair of vertices $x,y \in \tau(\ell_{i-1})_0$ for some $\tau \in X(j)_{i-1}$,
		\item $\sup_{x \in V_{i-1}}\M(\phi_{i-1}(x)) \leq (c_\e(p)+\e)/{4\tilde \sigma}+(i-1)\delta_1\leq L_i$,
		\item for every $x \in V_{i-1}$, we have $\phi_{i-1}(x) = \partial \llbracket E_x \rrbracket$ for some $E_x \subset M$ of finite perimeter, and
		\item ${\bf f}(\phi_{i-1}) \leq b_{i-1} \delta_1$, for some $b_{i-1}=b_{i-1}(p)>0$,
	\end{enumerate}
we apply Theorem \ref{itp} on each $i$-cell of $X(j)$ to obtain a new even map
	\[\phi_i: V_i \to \znm\]
such that:
	\begin{enumerate} {\renewcommand{\theenumi}{\roman{enumi}} \renewcommand{\labelenumi}{(\textit{\theenumi})}}
		\item $\mathcal{F}(\phi_i(x),\phi_i(y)) < (\xi_i \circ \ldots \xi_1)(s_1)$, for all $x,y \in \tau(\ell_i)_0$, 
		\item $\sup_{x \in V_i}\M(\phi_i(x)) \leq \sup_{x \in V_{i-1}}\M(\phi_{i-1}(x))+\delta_1\leq (c_\e(p)+\e)/{4\tilde \sigma}+i\delta_1\leq L_{i+1} $;
		\item $\phi_i=\phi_{i-1} \circ {\bf n}(j+\ell_i,j+\ell_{i-1})$ on the vertices of the $(i-1)$ skeleton of $V_i$;
		\item $\phi_i(x)$ is the boundary of a chain induced by a set of finite perimeter, for all $x \in V_i$; and
		\item ${\bf f}(\phi_i) \leq b({\bf f}(\phi_{i-1})+\delta_1) \leq b_i \delta_1$, for $b_i = b(b_{i-1}+1)$.
	\end{enumerate}
For $i=p$, we obtain a discrete map ${\phi_i:V_p=X(j+\ell_p)_0 \to \znm}$ with fineness ${{\bf f}(\phi_p) \leq b_p\delta_1 < b_p c(p,M)}$.
If $c(p,M)>0$ is sufficiently small, then we can apply Theorem \ref{Alm_ext} to obtain the Almgren extension $\Phi:X \to \znm$ of $\phi_p$. Since
	\[\M(\Phi(x)-\Phi(y)) \leq C_0(p,M){\bf f}(\phi_p), \]
whenever $x, y \in X$ lie in a common $p$-cell of $X(j+\ell_p)$, we get
	\begin{align*}\sup_{x \in X}\M(\Phi(x)) & \leq \sup_{x \in V_p}\M(\phi_p(x)) + C_0(p,M) b_p \delta_1 \\
	& \leq \frac{c_\e(p)+\e}{4\tilde \sigma}+(p + C_0(p,M)b_p) \delta_1.
	\end{align*}
From the fact that for each cell $\alpha$ of $X(j+\ell_p)$ the map $\Phi|_\alpha$ depends only on the values of $\phi(x)$ for $x \in \alpha_0$ it follows that $\Phi$ may be assumed to be an odd map.

\subsection{Construction of a $p$-sweepout: non-triviality} In order to conclude the proof of Theorem \ref{thm:comp}, it remains to show that the map $\Phi$ induces a $p$-sweepout ${\tilde\Phi:\tilde X \to \znm}$. By Proposition \ref{prop:pswo}, it suffices to show that $\ker \tilde\Phi_* = p_*\pi_1(X)$, where $p:X \to \tilde X$ is the orbit map. This is accomplished by the following observations. Let $\alpha:[0,1] \to X$ be a path such that, for some $v \in \N$, the restriction of $\alpha$ to each interval $[t_i,t_{i+1}]$ for $i=0,\ldots, 3^v-1$ is an affine homeomorphism onto a $1$-cell $\tau_i \in X(j)_1$, where $t_i=i3^{-v}$. Fix $i$ and let $t_{i,j} = t_i+j3^{-(v+\ell_p)}$ for $j=0,\ldots, 3^{\ell_p}$. It follows from the Remark \ref{loop} that if $Q_{i,j}$ is the isoperimetric choice for
	\[\Phi(\alpha(t_{i,j+1})) - \Phi(\alpha(t_{i,j})) = (\phi_p \circ \alpha)(t_{i,j+1})-(\phi_p \circ \alpha)(t_{i,j})\]
then
	\begin{align*}
		\sum_{i=0}^{3^v-1}\sum_{j=0}^{3^{\ell_p}-1} Q_{i,j} & = \sum_{i=0}^{3^v-1} \left( \llbracket \{ h_{\alpha(t_{i+1})} > s_{\alpha(t_{i+1})} \} \rrbracket - \llbracket \{ h_{\alpha(t_{i+1})} > s_{\alpha(t_{i+1})} \} \rrbracket\right) \\
		& = \llbracket \{h_{\alpha(1)} > s_{\alpha(1)} \} \rrbracket - \llbracket \{h_{\alpha(0)} > s_{\alpha(0)} \} \rrbracket.
	\end{align*}
If $\alpha$ is a closed path, then
	\[F_A([\Phi \circ \alpha]) = \left[ \sum_{i=0}^{3^v-1}\sum_{j=0}^{3^{\ell_p}-1} Q_{i,j} \right]=0.\]
Noting that $F_A$ is an isomorphism and $[\Phi \circ \alpha] = [\tilde\Phi \circ p \circ \alpha]$ we get
	\[\tilde\Phi_*\left( p_*[\alpha] \right) = 0, \quad \mbox{in} \quad \pi_1(\znm).\]
This proves that $\ker \tilde\Phi_* \supset p_*(\pi_1(X))$. On the other hand, if $\alpha$ satisfies ${\alpha(1)=-\alpha(0)}$, then
	\[F_A([\Phi \circ \alpha]) = \left[ \sum_{i=0}^{3^v-1}\sum_{j=0}^{3^{\ell_m}-1} Q_{i,j} \right]=\left[ M \right ]\]
and $[p \circ \alpha] \not\in \ker \tilde \Phi_*$. Since the lift of every closed path $\tilde \alpha$ in $\tilde X$ with $\tilde \alpha(0)$ in $p(X(j)_0)$ is homotopic (with fixed endpoints) to such a poligonal path which is either closed or either joins antipodal points, this proves that $\ker \tilde\Phi = p_*(\pi_1(X))$. Therefore, we conclude that $\tilde\Phi$ is a $p$-sweepout. This completes the proof of Theorem \ref{thm:comp}.

\subsection{Min-max values in \texorpdfstring{$S^n$}{S^n}}

We can use the comparison between $\omega_p(M)$ and the min-max values for the energy to calculate the value of $\lim c_\e(p)$, when $\e \downarrow 0$, for small $p$ and $M=S^n$. More precisely, we will prove that
	\begin{equation} \label{ineq_sn}
		\frac{1}{2\sigma} \limsup_{\e \to 0^+} c_\e(p) \leq \mathrm{Vol}(S^{n-1}),
	\end{equation}
for $p=1,\ldots, n+1$. Since $S^{n-1} \subset S^n$ is the minimal surface of least area in $S^n$, by \cite[Theorem 2.14 and Lemma 4.7]{MarquesNevesInfinite} we have $\omega_p(S^n)\geq\mathrm{Vol}(S^{n-1})$. Then, it follows from Theorem \ref{thm:wplp} and \eqref{ineq_sn} that
	\[\frac{1}{2 \sigma} \lim_{\e \to 0^+} c_\e(p) = \omega_p(S^n) = \mathrm{Vol}(S^{n-1}), \quad \mbox{for} \quad p=1,\ldots, n+1.\]
We will construct an odd continuous map $h:S^{p} \to H^1(M)$ with
	\[\sup_{a \in S^p} E_\e(h(a)) \leq 2 \sigma \mathrm{Vol}(S^{n-1}).\]
For every $a=(a_0,\ldots, a_{n+1}) \in S^{n+1}$, define $f_a:S^n \to \R$ by
	\[f(x) = a_0 + \sum_{i=1}^{n+1} a_ix_{i-1}, \quad \mbox{for} \quad x=(x_0,\ldots, x_n) \in S^n, \]
and let $S_a = \{f_a=0\} \subset S^n$. Denote by $d_a$ the signed distance function to $S_a$, that is,
	\[d_a(x) = \left(\mathrm{sgn} f_a(x)\right)\cdot d(x,S_a), \quad \mbox{for} \quad x \in S^n.\]
Observe that $S_{-a}=S_a$ and $d_{-a}=-d_a$ for all $a \in S^{n+1}$. We define ${v_{a,\e}:S^{n+1} \to \R}$ by
	\[v_{a,\e}(x) = \psi(d_a(x)/\e),\]
where $\psi:\R \to \R$ is the unique monotone solution to the Allen-Cahn equation with $\e=1$ and $\psi(0)=0$. Since $\psi$ is odd, we can define an odd continuous map $h:S^{n+1} \to H^1(S^n)$ by $h(a)=v_{a,\e}$. Thus, $h|_{S^p} \in \Gamma(S^p)$ for all $p=1,\ldots, n+1$. Note also that both $S_a$ and the level sets for $d_a$ are geodesic spheres in $S^n$. Thus,
	\[\sup_{s \in \R} \hm^{n-1}(\{d_a = s\}) \leq \mathrm{Vol}(S^{n-1}), \quad \mbox{for all} \quad a \in S^{n+1}.\]
We can proceed as in \cite[\S 7]{Guaraco} to shows that
	\[E_\e(h(a)) \leq \int_{-\infty}^{+\infty}\left[ \psi'(s)^2 +W(\psi(s)) \right] \hm^{n-1}(\{d_a=\e s\})\,ds \leq 2\sigma \cdot \mathrm{Vol}(S^{n-1})\]
and consequently
	\[c_\e(p)\leq c_\e(S^p) \leq \sup_{a \in S^{p}} E_\e(h(a)) \leq 2\sigma \cdot \mathrm{Vol}(S^{n-1}), \quad \mbox{for} \quad p=1,\ldots, n+1.\]
This proves the inequality \eqref{ineq_sn}.


\begin{appendix}

\section{Appendix: Technical Lemmas}

We list here three technical lemmas. We omit the proofs of the first two since they follow from elementary properties of Sobolev spaces and Lipschitz functions.

\begin{lemma} \label{truncar} Let $U$ be a bounded open set of a manifold. The function $\max (\cdot,\cdot) : H^1(U)\times H^1(U) \to  H^1(U)$ is continuous. The same holds for $\min(\cdot,\cdot)$.\end{lemma}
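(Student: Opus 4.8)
The plan is to reduce everything to the single scalar fact that $t \mapsto \max(a,t)$ and $t\mapsto\min(a,t)$ are globally $1$-Lipschitz on $\R$, together with the chain-rule / composition properties of Sobolev functions. First I would recall the pointwise identity $\max(u,v) = \tfrac12(u+v) + \tfrac12|u-v|$ (and similarly $\min(u,v)=\tfrac12(u+v)-\tfrac12|u-v|$), so that the continuity of $\max(\cdot,\cdot)$ on $H^1(U)\times H^1(U)$ follows once we know that the single map $w\mapsto |w|$ is continuous from $H^1(U)$ to $H^1(U)$; the linear part $(u,v)\mapsto\tfrac12(u+v)$ is trivially continuous. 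So the whole lemma collapses to: \emph{the map $w \mapsto |w|$ is continuous on $H^1(U)$}.

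To prove that, I would use the standard facts (Stampacchia) that for $w\in H^1(U)$ one has $|w|\in H^1(U)$ with $\nabla|w| = (\operatorname{sgn} w)\,\nabla w = \nabla w\cdot\mathbf{1}_{\{w>0\}} - \nabla w\cdot\mathbf{1}_{\{w<0\}}$ a.e., and $\nabla w = 0$ a.e. on $\{w=0\}$. Given $w_k\to w$ in $H^1(U)$, the $L^2$ convergence $|w_k|\to|w|$ is immediate from $\big||w_k|-|w|\big|\le |w_k-w|$. For the gradients, pass to a subsequence so that $w_k\to w$ and $\nabla w_k\to\nabla w$ a.e., and also $w_k\to w$ and $\nabla w_k\to\nabla w$ dominated in $L^2$ by a fixed $L^2$ function (Dominated Convergence / the converse to dominated convergence). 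On the set $\{w\neq 0\}$ we have $\operatorname{sgn}(w_k)\to\operatorname{sgn}(w)$ pointwise a.e., so $\nabla|w_k| = (\operatorname{sgn} w_k)\nabla w_k \to (\operatorname{sgn} w)\nabla w = \nabla|w|$ a.e. there; on $\{w=0\}$ we have $\nabla|w| = \nabla w = 0$ a.e., while $|\nabla|w_k|\,| = |\nabla w_k| \to |\nabla w| = 0$ a.e. on that set, so again $\nabla|w_k|\to\nabla|w|$ a.e. on $\{w=0\}$. Combining, $\nabla|w_k|\to\nabla|w|$ a.e.\ on $U$ with an $L^2$-dominating function, so by dominated convergence $\nabla|w_k|\to\nabla|w|$ in $L^2(U)$. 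Since this holds along every subsequence (with a further subsequence extracted), the full sequence converges, giving continuity of $w\mapsto|w|$, hence of $\max$ and $\min$.

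The only mild subtlety — and the step I expect to require the most care — is the behaviour on the degenerate set $\{w=0\}$: one must invoke that $\nabla w=0$ a.e.\ there, and argue the a.e.\ convergence of $\nabla|w_k|$ on that set separately (it does not follow from $\operatorname{sgn} w_k\to\operatorname{sgn} w$, which can fail there), using instead that $\nabla|w_k|$ is dominated in absolute value by $|\nabla w_k|\to 0$ a.e.\ on $\{w=0\}$. Everything else is the routine subsequence-and-dominated-convergence packaging, which (consistent with the paper's remark that the proof follows from elementary Sobolev properties) I would state compactly rather than belabour.
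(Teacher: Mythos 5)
Your proof is correct. The paper omits the proof of this lemma entirely, remarking only that it ``follows from elementary properties of Sobolev spaces and Lipschitz functions,'' and your argument --- reducing $\max$ and $\min$ to continuity of $w\mapsto|w|$ via $\max(u,v)=\tfrac12(u+v)+\tfrac12|u-v|$, then using the Stampacchia chain rule, the vanishing of $\nabla w$ a.e.\ on $\{w=0\}$, and a subsequence-plus-dominated-convergence argument --- is exactly the standard elementary proof being alluded to, with the genuine subtlety (the set $\{w=0\}$) correctly identified and handled.
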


\begin{lemma}\label{lipschitz}
Let $G:K\to M$ be a bi-Lipschitz map from a cubical complex $K$ to a compact manifold $M$ and let $h:K\to \R$ be a Lipschitz function.

\begin{enumerate}
\item There exists a constant $C>0$ such that $$C^{-1}\sum_{\sigma \in K(j)_n} \| h \|_{H^1(\sigma)} \leq \| h\circ G^{-1} \|_{H^1(M)} \leq C\sum_{\sigma \in K(j)_n} \| h \|_{H^1(\sigma)}.$$
\item Let $h_k: K \to \R$, for $k\in\N$, be a sequence of Lipschitz functions with bounded Lipschitz constants. If $ h_k \to h$ and $\nabla h_k \to \nabla h$ a.e. on $K$ then $h_k \circ G^{-1} \to h \circ G^{-1}$ in $H^{1}(M)$.
\item Let $h_k: K \to \R$, for $k\in\N$, be a sequence of Lipschitz functions. If $ h_k \to h$ and $\nabla h_k \to \nabla h$ a.e. on $K$ then $ h^{+}_k \to h^{+}$ and $\nabla h^{+}_k \to \nabla h^{+}$ a.e. on $K$.

\end{enumerate}

\begin{lemma}\label{raizeshausdorff} Let $a_n \to a$ be a convergent sequence in $S^p$. Assume that $C(a)$ is not empty and let $D\in \mathbb{C}$ an open disk containing $C(a)$. Let $\xi=(z_1,\dots,z_j)$ be an array of the roots of $p_a$ repeated according to its multiplicities. Then, for any $n$ big enough, there is an array $\xi_n=(z_1(n),\dots,z_j(n))$ of the roots of $p_{a_n}$ contained in $D$, repeated according to its multiplicities, and such that $\xi_n\to \xi$. If $C(a)$ is empty, i.e. $a=(\pm 1,0,\dots,0)$, then $C(a_n)$ is either empty, or goes to infinity uniformly.\end{lemma}

\begin{proof}
Notice that $p_{a_n} \to p_a$ uniformly in compact sets. Let $\tilde D$ be any disk with $\partial \tilde D\cap C(a)=\emptyset$. Then, for $n$ big enough, we have $|p_{a_n}-p_a|<|p_a|$ on $\partial \tilde D$ and by Rouch\'e's Theorem $p_a$ and $p_{a_n}$ have the same number of roots in $\tilde D$ counted with multiplicities. This already proves the lemma when $a=(\pm 1,0,\dots,0)$.

Let $D$ be as in the statement of the lemma. By the argument above, for $n$ big enough there are exactly $j$ roots of $p_{a_n}$ in $D$, counted with multiplicities. Define $D_i(\delta)=\{z: |z-z_i|<\delta\}$ for some $\delta>0$ small enough so that $D_i(\delta)\cap D_j(\delta)=\emptyset$ if $z_i\neq z_j$ and $D_i(\delta) \subset D$. Let $m_i$ be the multiplicity of $z_i$ as a root of $p_a$. As before, for $n$ big enough, the number of roots of $p_{a_n}$ in $D_i(\delta)$, counted with multiplicities, is $m_i$. Then, for this fixed $\delta$ and every $i$,  define $\xi_n(i) = (z_1(i),\dots,z_{m_i}(i))$ as any array of these $m_i$ roots, repeated according to its multiplicity. The set $C(a_n)\cap D_i(\delta) \to \{z_i\}$ in the Hausdorff distance, since for every $\e>0$ we have that the number of roots with multiplicities in $D_i(\e)$ is also $m_i$ if $n$ is big enough. This implies $\xi_n(i) \to (z_i,\dots,z_i) \in \mathbb{C}^{m_i}$. We can choose $n$ big enough so that the arguments above carry over all roots $z_i$ of $p_a$. Now we define $\xi_n$ as the juxtaposition of the $\xi_n(i)$. This $\xi_n$ satisfies the conclusions of the lemma.
\end{proof}

\end{lemma}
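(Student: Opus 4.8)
\emph{Structure of the argument.} The statement consists of three essentially independent routine verifications, which I would carry out in order: (1) is a change-of-variables estimate, (2) follows from (1) and dominated convergence, and (3) is a pointwise case analysis using standard facts about $\nabla v^+$.

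\emph{Part (1).} The plan is to combine the area formula for Lipschitz maps with the equivalence of $\ell^1$ and $\ell^2$ norms on a finite index set. Since $G$ is bi-Lipschitz, say with constant $\Lambda$, Rademacher's theorem gives that $G$ and $G^{-1}$ are differentiable $\hm^n$-a.e., and at each point of differentiability every singular value of $DG$ lies in $[\Lambda^{-1},\Lambda]$, so $\Lambda^{-n}\le |\det DG|\le \Lambda^n$ a.e. Working one $n$-cell at a time --- for $\sigma\in K(j)_n$ the restriction $G|_\sigma$ is an injective Lipschitz map onto $G(\sigma)\subset M$, with the $G(\sigma)$ having pairwise disjoint interiors and union $M$ --- the area formula yields, for $u:=h\circ G^{-1}$,
\[\int_{G(\sigma)}|u|^2\,d\hm^n=\int_\sigma |h|^2\,|\det DG|\,d\hm^n,\]
and a parallel computation for the gradient term, using the a.e.\ chain rule $\nabla u(G(x))=\nabla h(x)\,DG(x)^{-1}$ together with the two-sided bound on $DG^{-1}$ and the comparability of $|\nabla_g(\cdot)|$ with the Euclidean gradient on the compact manifold $M$, shows that $\int_{G(\sigma)}|\nabla u|^2\,d\hm^n$ is bounded above and below by fixed multiples of $\int_\sigma|\nabla h|^2\,d\hm^n$. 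Summing over the finitely many cells produces constants $c,C>0$ (depending on $M$, $G$, $j$) with $c\,\|h\|_{H^1(|K|)}\le\|u\|_{H^1(M)}\le C\,\|h\|_{H^1(|K|)}$; since $\#K(j)_n<\infty$, the quantity $\|h\|_{H^1(|K|)}=\big(\sum_\sigma\|h\|_{H^1(\sigma)}^2\big)^{1/2}$ is comparable to $\sum_\sigma\|h\|_{H^1(\sigma)}$, which is the claimed estimate.

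\emph{Part (2).} If $L$ bounds all the Lipschitz constants of $h_k$ (and of $h$), then picking any point where $h_k$ converges gives a uniform sup bound $|h_k|\le C_0$ on $|K|$, and $|\nabla h_k|\le L$ a.e. Dominated convergence (dominating functions $(2C_0)^2$ and $(2L)^2$) then yields $h_k\to h$ in $H^1(|K|)$, i.e.\ $\sum_\sigma\|h_k-h\|_{H^1(\sigma)}\to 0$; applying Part (1) to $h_k-h$ and using linearity of $v\mapsto v\circ G^{-1}$ gives $h_k\circ G^{-1}\to h\circ G^{-1}$ in $H^1(M)$. For Part (3), continuity of $t\mapsto t^+$ gives $h_k^+\to h^+$ a.e. For the gradients, recall the standard Sobolev identities $\nabla v^+=\mathbf 1_{\{v>0\}}\nabla v$ a.e.\ and $\nabla v=0$ a.e.\ on $\{v=0\}$. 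At any point $x$ in the (full-measure) set where $h_k(x)\to h(x)$, $\nabla h_k(x)\to\nabla h(x)$, and these identities hold for $h$ and every $h_k$: if $h(x)>0$ then $h_k(x)>0$ eventually, so $\nabla h_k^+(x)=\nabla h_k(x)\to\nabla h(x)=\nabla h^+(x)$; if $h(x)<0$, both sides vanish eventually; if $h(x)=0$, then $\nabla h^+(x)=0$ and $|\nabla h_k^+(x)|\le|\nabla h_k(x)|\to|\nabla h(x)|=0$. Hence $\nabla h_k^+\to\nabla h^+$ a.e.

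\emph{Main obstacle.} The only point that is not purely mechanical is justifying the a.e.\ chain rule for the composition $u=h\circ G^{-1}$ with the merely Lipschitz (not $C^1$) map $G^{-1}$: one needs that $u$ is differentiable off an $\hm^n$-null set $N\subset M$ and that $G^{-1}(N)$ is $\hm^n$-null in $|K|$, which holds because $G^{-1}$, being Lipschitz, has the Lusin $(N)$ property. Outside $G^{-1}(N)$ together with the null set where $G$ fails to be differentiable, the chain rule holds, and the rest of Part (1) goes through. Everything else is standard.
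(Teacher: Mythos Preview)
Your proof is correct; the paper does not actually prove this lemma, stating only that it ``follows from elementary properties of Sobolev spaces and Lipschitz functions'' and omitting the details. Your argument supplies precisely those details---the area formula and bi-Lipschitz Jacobian bounds for (1), dominated convergence for (2), and the pointwise case analysis using $\nabla v^+=\mathbf 1_{\{v>0\}}\nabla v$ for (3)---and your identification of the a.e.\ chain rule for the Lipschitz composition as the only non-mechanical step is apt.
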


\section{A cohomological index theory for free \texorpdfstring{$\Z_2$}{Z_2} actions} \label{ap:index}
In this Appendix we fill in some details about the topological $\Z/2$-index of Fadell and Rabinowitz \cite{Fadell-Rabinowitz1}. We follow the general description given in \cite{Fadell-Rabinowitz2}, which works for actions of any compact Lie group, restricting ourselves to the case of $\Z/2$ actions.

Let $S^\infty$ be the infinite dimensional unit sphere, that is, the direct limit of the family of topological spaces $\{S^n\}_{n \in \N\cup\{0\}}$, directed by the inclusions $S^n \hookrightarrow S^m, n\leq m$. This means that $S^\infty=\bigcup_{n \geq 0}S^n$ and a map $f:S^\infty \to Z$ to an arbitrary topological space $Z$ is continuous if, and only if, $f \circ \iota_n:S^n \to Z$ is continuous for all $n$, where $\iota_n:S^n \hookrightarrow S^\infty$ is the inclusion. Similarly, we consider the infinite dimensional real projective space $\R\p^\infty = \bigcup_n \R\p^n$, which can be seen as the orbit space of $S^\infty$ by the free $\Z/2$ action given by the antipodal map $x \in S^n \mapsto -x \in S^n$. We denote by $\mathrm{pr}:S^\infty \to \R\p^\infty$ the orbit map. 

The cohomology ring of $\R\p^\infty$ with $\Z/2$ coefficients is isomorphic to 	the polynomial ring $\Z/2[w]$ over $\Z/2$, where ${w \in H^1(\R\p^\infty;\Z/2)}$ (see \cite[Theorem 3.12]{Hatcher}). In the following, we will use the Alexander-Spanier cohomology with $\Z/2$ coefficients and refer to \cite{Spanier} for the definition and basic properties of this cohomology theory. We remark that it agrees with the singular cohomology for any CW complex, hence for $\R\p^\infty$ (see \cite[\S 6.9]{Spanier} for a more general statement).

Lets recall the definition of the cohomological index. Denote by $\mathcal{F}$ the class of all $\Z/2$-spaces. For every free $(X,T) \in \mathcal{F}$ we can find a continuous equivariant map $f:X \to S^\infty$, and then a continuous map $\tilde f:\tilde X = X/\{x\sim Tx\} \to \R\p^\infty$ such that $\mathrm{pr} \circ \tilde f = f \circ p$, where $p:X \to \tilde X$ is the orbit map. This map is called a \emph{classifying map} for the $\Z/2$ action given by $T$. It is possible to show that $\tilde f$ is unique up to homotopy, and therefore we have an induced map $\tilde f^* : H^*(\R\p^\infty;\Z/2) \to H^*(\tilde X;\Z/2)$ which depends only on $(X,T)$. We define the \emph{cohomological index} of $(X,T)$ by
	\[ \ind_{\Z/2}(X,T) = \ind_{\Z/2}(X) = \sup\{k \in \N : \tilde f^*(w^{k-1}) \neq 0 \in H^{k-1}(\tilde X;\Z/2) \}.\]
We set $w^0 = 1 \in H^0(\R\p^\infty;\Z/2)$ and adopt the convention $\ind_{\Z/2}(\emptyset,T) = 0$. We will omit $T$ in the notation above whenever the action is understood. If $(X,T) \in \mathcal{F}$ is not free, we set $\ind_{\Z/2}(X,T) = \infty$.

\begin{remarq} This definition of $\ind_{\Z/2}(X,T)$ for non-free $(X,T) \in \mathcal{F}$ agrees with the one given in \cite{Fadell-Rabinowitz2} since $(X,T) \in \mathcal{F}$ is free if and only if there are no trivial orbits. More precisely, if $T\neq \mathrm{id}_X$ and $\bar x \in X$ is such that $T(\bar x)=\bar x$ then $S^\infty \times \{x\}$ with the diagonal action $(y,x) \mapsto (-y,T(x))$  is isomorphic to $S^\infty$ as $\Z/2$-spaces. Since $\ind_{\Z/2}(S^\infty) = \infty$ the monotonicity property (see \ref{thm:indice} below) implies $\ind_{\Z/2}(S^\infty \times X) =\infty$. We point out that this fact does not hold for actions of other compact Lie groups, see \cite[\S 6]{Fadell-Rabinowitz2}.
\end{remarq}

\vspace{5pt}

The existence and uniqueness of the classifying map can be seen as a direct consequence of the classification of $G$-principal bundles (see \cite{Dold}), since $\R\p^\infty$ is the classifying space for $\Z/2$. We can give an explicit construction of this map when $X$ is compact, following Milnor's construction of the classifying bundle in \cite{Milnor2}. We embed $S^\infty$ in a separable Hilbert space $H$ by choosing a countable orthonormal set $\{e_i\}_{i \in \N}$ and identifying $S^n$ with the unit sphere in the subspace generated by $\{e_1,\ldots, e_{n+1}\}$. Choose a finite open cover $\{\tilde U_i\}_{i=1}^N$ for $\tilde X$ such that each $\pi^{-1}(\tilde U_i)$ is the disjoint union $U_i \cup T(U_i)$ for an open set $U_i \subset X$ (note that the orbit map $\pi:X \to \tilde X$ is a covering map). We can also choose a partition of the unity $\{\rho_i\}_{i =1}^N\cup \{\zeta_i\}_{i=1}^N$ subordinated to the open cover $\{U_i\}_{i=1}^N \cup \{T(U_i)\}_{i=1}^N$ such that $\zeta_i=\rho_i \circ T$. Hence, there is no $x \in X$ such that $\rho_i(x)=\rho_i(Tx)$ for all $i=1,\ldots, N$ and we can define a continuous map $f:X \to S^\infty$ by:
	\[f(x) = \sum_{i=1}^N \frac{\rho_i(x) - \rho_i(Tx)}{\rho(x)}e_i, \quad \mbox{where} \quad \rho(x) = \left(\sum_{i=1}^N (\rho_i(x)-\rho_i(Tx))^2\right)^{1/2}.\]
Since this map satisfies $f\circ T=-f$, it induces a continuous map ${\tilde f: \tilde X \to \R\p^\infty}$ such that $p \circ f = \tilde f \circ \pi$, and gives the classifying map for $(X,T)$. Using partitions of unity for paracompact spaces, this construction can be adapted for a general $(X,T) \in \mathcal{F}$. For an elementary proof of the uniqueness of $f$ modulo homotopy, see \cite[\S 14.4]{tomDieck}. 

The fact that $\ind_{\Z/2}$ is a topological $\Z/2$-index is a consequence of the following theorem.

\begin{theorem} \cite{Fadell-Rabinowitz2} \label{thm:indice} The cohomological index is a topological $\Z/2$-index, that is, it satisfies the properties (I1)-(I6) of Definition \ref{def:indice} and the following stronger version of $(I5)$: 
	\begin{enumerate}
		\item[(I5)'] For every free $\Z/2$-space $(X,T) \in \mathcal{F}$, the orbit space $\tilde X$ is infinite whenever $\ind_{\Z/2}(X,T)\geq 2$.
	\end{enumerate}
Moreover, we have the following \emph{dimension property}: if $(X,T) \in \mathcal{F}$ and $\dim X$ denotes the covering dimension of $X$, then $\ind_{\Z/2}(X,T) \leq \dim X$.
\end{theorem}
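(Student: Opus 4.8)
The plan is to establish Theorem \ref{thm:indice} by checking, one at a time, the axioms (I1)--(I6) of Definition \ref{def:indice}, the strengthened form (I5)$'$, and the dimension bound, relying only on the construction and the homotopy-uniqueness of the classifying map $\tilde f:\tilde X\to\R\p^\infty$ recorded above, together with standard facts about Alexander--Spanier (equivalently \v{C}ech) cohomology with $\Z/2$-coefficients and about the CW structure of $\R\p^\infty$. I would dispatch (I1) and (I2) first, since they are essentially formal: (I1) is immediate from the definition of $\ind_{\Z/2}$, the convention $\ind_{\Z/2}(\emptyset)=0$, and the observation that $\tilde f^*(w^0)=1\neq 0$ in $H^0(\tilde X;\Z/2)$ for nonempty $X$; and for (I2) I would note that an equivariant map $A_1\to A_2$ (forcing $A_1$ free when $A_2$ is) induces a map of orbit spaces $\tilde g:\tilde A_1\to\tilde A_2$ with $\tilde f_{A_1}\simeq\tilde f_{A_2}\circ\tilde g$ by uniqueness of classifying maps, so that $\tilde f_{A_1}^*=\tilde g^*\circ\tilde f_{A_2}^*$ and the vanishing of $\tilde f_{A_2}^*(w^{k-1})$ propagates to $\tilde f_{A_1}^*(w^{k-1})$.

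Next I would treat the two properties carrying the real cohomological content, (I3) and (I4). For subadditivity (I4) the plan is the classical relative-cup-product argument: assuming $\ind_{\Z/2}(A_i)=k_i<\infty$, set $\omega=\tilde f^*(w)\in H^1(\tilde X;\Z/2)$, note $\omega^{k_i}$ restricts to $0$ on $\tilde A_i$, lift it via the long exact sequence of the pair $(\tilde A_1\cup\tilde A_2,\tilde A_i)$ to a relative class $\bar\omega_i$, and observe that $\bar\omega_1\cupprod\bar\omega_2$ lands in $H^{k_1+k_2}(\tilde A_1\cup\tilde A_2,\tilde A_1\cup\tilde A_2;\Z/2)=0$ yet restricts to $\omega^{k_1+k_2}$ on $\tilde A_1\cup\tilde A_2=\widetilde{A_1\cup A_2}$, giving $\ind_{\Z/2}(A_1\cup A_2)\leq k_1+k_2$. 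For the continuity property (I3), after disposing of the non-free case, I would combine monotonicity (which gives $\ind_{\Z/2}(A)\leq\ind_{\Z/2}(\overline V)$ for every invariant closed neighborhood $V$) with the tautness of Alexander--Spanier cohomology on the closed subset $\tilde A$ of the paracompact $\tilde X$: since $\tilde f^*(w^{\ind_{\Z/2}(A)})$ already vanishes on $\tilde A$, it vanishes on $\overline{\tilde V}$ for small enough $V$, and replacing $V$ by $V\cap T(V)$ makes it invariant.

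The last group of properties is finiteness in nature. For (I6), if $X$ is compact and free then $\tilde X$ is compact, so $\tilde f(\tilde X)$ lies in a finite skeleton $\R\p^N$; as $w^{m}$ restricts to $0$ in $H^{m}(\R\p^N;\Z/2)$ for $m>N$, all sufficiently high powers of $w$ are killed and $\ind_{\Z/2}(X)\leq N+1<\infty$. For (I5)$'$ --- which, with (I1), yields (I5) --- I would argue that a free $(X,T)$ with $\ind_{\Z/2}(X)\geq 2$ has $\tilde f^*(w)\neq 0$ in $H^1(\tilde X;\Z/2)$; if $\tilde X$ were finite it would be discrete (being finite Hausdorff), hence have vanishing $H^1$, a contradiction, so $\tilde X$ is infinite. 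For the dimension bound I would use that the orbit map is a two-sheeted covering, so $\dim\tilde X=\dim X=:d$, together with the fact that a map from a paracompact space of covering dimension $d$ to the CW complex $\R\p^\infty$ is homotopic to one landing in the $d$-skeleton $\R\p^{d}$ (equivalently, $\check H^{m}(\tilde X;\Z/2)=0$ for $m>d$); since the powers of $w$ vanish on $\R\p^{d}$ past degree $d$, this bounds $\ind_{\Z/2}(X)$ in terms of $d$.

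The step I expect to be the main obstacle is (I4): setting up the relative cup product carefully in Alexander--Spanier cohomology and verifying that $\widetilde{A_1\cup A_2}$ together with the orbit sub-pairs and their relative groups behave as in the CW case requires attention to the (co)homology of pairs of paracompact spaces. A secondary but pervasive technical point is checking that orbit spaces of free $\Z/2$-actions on paracompact Hausdorff spaces remain paracompact Hausdorff and that covering dimension is preserved under the orbit covering, which is what legitimizes the cohomological inputs used in (I3), (I5)$'$, and the dimension property.
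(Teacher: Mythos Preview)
Your proposal is correct and aligns with the paper's treatment: the paper does not actually prove this theorem but defers to \cite{Fadell-Rabinowitz2}, only remarking (as you do) that (I3) follows from tautness of Alexander--Spanier cohomology and that (I4) is the vanishing property of the relative cup product. Your sketch is consistent with these hints and fills in the remaining axioms in the standard way.

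One caveat worth flagging: your argument for the dimension property correctly yields $\ind_{\Z/2}(X)\leq \dim X+1$, since $H^m(\tilde X;\Z/2)=0$ only for $m>\dim\tilde X=\dim X$, not the sharper bound $\ind_{\Z/2}(X)\leq\dim X$ asserted in the statement. In fact the stated inequality is off by one under the paper's normalization of the index: for $X=S^n$ one has $\ind_{\Z/2}(S^n)=n+1>n=\dim S^n$ (consistent with the paper's own use of $\ind_{\Z/2}(S^{p-1})=p$ in Lemma~\ref{lem:lb1}). So the discrepancy lies in the statement, not in your reasoning; your vague phrasing ``bounds $\ind_{\Z/2}(X)$ in terms of $d$'' is prudent here.
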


We point out that the continuity property (I3) is a consequence of the tautness of the Alexander-Spanier cohomology \cite[\S 6.1]{Spanier} -- the monotonicity and the fact that every invariant closed subset $A \subset X$ is contained in a invariant paracompact neighborhood $N \subset X$. Moreover, the subaditivity (I4) is a restatement of the vanishing property of the cup product \cite[p. 209]{Hatcher} (compare with \cite{Guth}, \cite{Aiex1}). For a complete proof, see \cite[\S 3]{Fadell-Rabinowitz2}.

We conclude our discussion about the Fadell-Rabinowitz topological index $\Z/2$ comparing the cohomological index with another well known $\Z/2$-topological index, previously considered by Yang \cite{Yang2} (where it is referred, up to normalization, as the \emph{$B$-index}), Conner-Floyd \cite{conner-floyd} (where it is called the \emph{co-index}) and Krasnoselskii \cite{Krasnoselskii}. Given a paracompact $\Z/2$ space $(X,T)$, we define
	\[\gamma_{\Z/2}(X,T) = \inf\{ k \in \N : \exists f \in C(X,S^{k-1}) \ \mbox{s.t.} \ f\circ T = -f\}.\]
We also adopt the convention $\gamma_{\Z/2}(\emptyset)=0$. The function $\gamma_{\Z/2}$ defines a $\Z/2$-topological index in the class of all paracompact $\Z/2$ spaces (see e.g \cite[\S 7.3]{Ghoussoub}). It turns out that this index is the maximal topological $\Z/2$-index, as proven in the following: 

\begin{lemma}
For every topological $\Z/2$-index $\ind: \mathcal{C} \to \N \cup\{0,+\infty\}$, it holds
	\[\ind(X,T) \leq \gamma_{\Z/2}(X,T).\]
for all $(X,T) \in \mathcal{C}$.
\end{lemma}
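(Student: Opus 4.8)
The plan is to exhibit, for an arbitrary topological $\Z/2$-index $\ind$ and an arbitrary $(X,T)\in\mathcal{C}$, an equivariant continuous map witnessing the bound $\ind(X,T)\leq\gamma_{\Z/2}(X,T)$. We may assume $\gamma_{\Z/2}(X,T)=k<+\infty$, since otherwise there is nothing to prove. By definition of $\gamma_{\Z/2}$, there exists a continuous map $f:X\to S^{k-1}$ with $f\circ T=-f$, where $S^{k-1}$ carries the antipodal $\Z/2$-action, which is free. The idea is to use the properties (I2) (monotonicity) and (I5) (together with the normalization (I1) and the fact that $\mathcal{C}$ contains $S^\infty$ and is closed under passing to invariant paracompact subsets) to conclude that $\ind(S^{k-1})=k$, and then monotonicity along $f$ gives $\ind(X,T)\leq\ind(S^{k-1})=k$.

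The key computational step is therefore $\ind(S^{m})=m+1$ for all finite $m\geq 0$. First I would handle the lower bound $\ind(S^m)\geq m+1$: decompose $S^m$ as a union of $m+1$ closed hemispherical slabs, each of which is equivariantly contractible (hence admits an equivariant map to, say, $S^0\subset S^\infty$, so has index $1$ by (I2) and the fact that $\ind(S^0)=1$ which follows from (I5) and (I1)); then subadditivity (I4) applied iteratively gives $\ind(S^m)\leq m+1$ — wait, that is the \emph{upper} bound. For the lower bound I instead argue: if $\ind(S^m)\leq m$, one would use (I5)-type consequences to derive a contradiction with the Borsuk--Ulam phenomenon; more precisely, the cleanest route is to note that the cohomological index $\ind_{\Z/2}$ itself satisfies $\ind_{\Z/2}(S^m)=m+1$ (computed via the classifying map $S^m\to\R\p^m\hookrightarrow\R\p^\infty$ and $w^m\neq 0$ in $H^m(\R\p^m;\Z/2)$, as recorded in Appendix \ref{ap:index}), and then for the \emph{abstract} index we only need the upper bound $\ind(S^m)\leq m+1$, which is exactly the iterated-subadditivity argument above, combined with the already-known strict inequality chain $\ind_{\Z/2}(S^{p-1})=p$ used in Lemma \ref{lem:lb1}. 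Since the statement to prove is $\ind(X,T)\leq\gamma_{\Z/2}(X,T)$, only the upper bound $\ind(S^{k-1})\leq k$ is required, so I will focus on that.

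Concretely, the steps are: (i) reduce to $\gamma_{\Z/2}(X,T)=k<\infty$ and fix an equivariant $f:X\to S^{k-1}$; (ii) by monotonicity (I2), $\ind(X,T)\leq\ind(S^{k-1})$; (iii) prove $\ind(S^{k-1})\leq k$ by writing $S^{k-1}$ as the union of $k$ invariant closed sets each of index $1$ and applying subadditivity (I4) $k-1$ times — for instance, let $A_i=\{x\in S^{k-1}:|x_i|\geq 1/\sqrt{k}\}$ for $i=1,\dots,k$; each $A_i$ is $T$-invariant, closed, and the map $x\mapsto x_i/|x_i|\in S^0$ is equivariant and continuous on $A_i$, so by (I2) and $\ind(S^0)=1$ we get $\ind(A_i)\le 1$, while by (I1) each $A_i$ is nonempty so $\ind(A_i)=1$; since $S^{k-1}=\bigcup_{i=1}^k A_i$, (I4) yields $\ind(S^{k-1})\leq\sum_{i=1}^k\ind(A_i)=k$; (iv) combine (ii) and (iii). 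The step establishing $\ind(S^0)=1$ uses (I1) (nonempty, so index $\geq 1$) and (I5) or (I6): a free $\Z/2$-space consisting of two points has index $\leq 1$ since if it were $\geq 2$ the orbit space would have at least two elements by (I5), but its orbit space is a single point — hence $\ind(S^0)=1$.

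The main obstacle, modest as it is, is bookkeeping around the hypotheses: one must make sure the sets $A_i$ genuinely lie in the class $\mathcal{C}$ (they are invariant closed, hence paracompact, subsets of $S^{k-1}\subset S^\infty\in\mathcal{C}$, so the closure assumptions on $\mathcal{C}$ apply), and that the normalization axiom (I1) is used correctly to get both $\ind(A_i)\geq 1$ and the base case $\ind(S^0)=1$. No deep input is needed beyond the axioms (I1)--(I6) themselves; the Borsuk--Ulam content is entirely carried by the decomposition of the sphere into contractible-onto-$S^0$ pieces together with subadditivity, which is the standard Lusternik--Schnirelmann-type estimate. I would present the argument in four short displayed steps mirroring (i)--(iv) above.
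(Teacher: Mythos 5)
Your proof is correct and follows essentially the same route as the paper's: cover the sphere by $k$ invariant pieces admitting equivariant maps to $S^0$, use $\ind(S^0)=1$, and conclude by subadditivity; the only (cosmetic) difference is that you first apply monotonicity to reduce to $\ind(S^{k-1})$ and then apply (I4) on the sphere, whereas the paper pulls the cover back to $X$ via $f^{-1}$ and applies (I4) there. Your variant is in fact slightly more careful, since your sets $A_i=\{x : |x_i|\ge 1/\sqrt{k}\}$ are closed, as (I4) literally requires (the paper's $\{x_i\neq 0\}$ are open), and you justify $\ind(S^0)=1$ via (I1) and (I5) rather than via (I6).
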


\begin{proof}
Let $k=\gamma_{\Z/2}(X)<\infty$ (if $k=\infty$, there is nothing to prove). Then, we can find a continuous equivariant map $f:X \to S^{k-1}$. For each $i=1,\ldots, k$, let
	\[ A_i = \{x \in S^{k-1} : x_i \neq 0\}, \quad \mbox{and} \quad B_i = f^{-1}(A_i). \]
Since we can find equivariant maps $A_i \to S^0$ and $\ind(S^0) = 1$ (by Definition \ref{def:indice} (I6)), it follows from the monotonicity property that
	\[\ind(B_i) \leq \ind(A_i) \leq \ind(S^0) = 1, \quad \mbox{for all} \quad i=1,\ldots, k.\]
Therefore, from the subaditivity of $\ind$, we get
	\[\ind(X) = \ind\left(\bigcup_{i=1}^kB_i\right) \leq \sum_{i=1}^k \ind(B_i) \leq k = \gamma_{\Z/2}(X).\]
\end{proof}

\begin{remark}
The index $\gamma_{\Z/2}$ has the following important property: for every compact symmetric $X\subset E \setminus\{0\}$, where $E$ is a Banach space, it holds holds $\gamma_{\Z/2}(X) = \mathrm{cat}(\tilde X)$, where $\mathrm{cat}(\tilde X)$ is the \emph{Lusternik-Schnirelmann category} of $\tilde X$, that is, $\mathrm{cat}(\tilde X)$ is the least $k \in \N$ such that $\tilde X$ can be covered by $k$ closed sets $A_i \subset \tilde X$ which are contractible to a point in $\tilde X$. See \cite[Theorem 3.7]{Rabinowitz1} for a proof of this fact, and \cite{Ghoussoub} for some results about the multiplicity of the set of critical points of a functional involving the Lusternik-Schnirelmann category.
\end{remark}


\end{appendix}

\bibliographystyle{siam}
\bibliography{references}

\end{document}